\def\E{\ifmmode{\mathbb E}\else{$\mathbb E$}\fi} 
\def\N{\ifmmode{\mathbb N}\else{$\mathbb N$}\fi} 
\def\R{\ifmmode{\mathbb R}\else{$\mathbb R$}\fi} 
\def\Q{\ifmmode{\mathbb Q}\else{$\mathbb Q$}\fi} 
\def\C{\ifmmode{\mathbb C}\else{$\mathbb C$}\fi} 
\def\H{\ifmmode{\mathbb H}\else{$\mathbb H$}\fi} 
\def\Z{\ifmmode{\mathbb Z}\else{$\mathbb Z$}\fi} 
\def\P{\ifmmode{\mathbb P}\else{$\mathbb P$}\fi} 
\def\T{\ifmmode{\mathbb T}\else{$\mathbb T$}\fi} 
\def\SS{\ifmmode{\mathbb S}\else{$\mathbb S$}\fi} 
\def\DD{\ifmmode{\mathbb D}\else{$\mathbb D$}\fi} 
\def\K{\ifmmode{\mathbb K}\else{$\mathbb K$}\fi}
\theoremstyle{theorem}
\newtheorem{thm}{Theorem}[section]
\newtheorem{cor}[thm]{Corollary}
\newtheorem{lem}[thm]{Lemma}
\newtheorem{prop}[thm]{Proposition}
\newtheorem{conj}[thm]{Conjecture}
\theoremstyle{definition}
\newtheorem{defn}[thm]{Definition}
\newtheorem{rem}[thm]{Remark}
\newtheorem{conds}[thm]{Condition}
\newtheorem{defnlem}[thm]{Definition-Lemma}
\newtheorem{assump}[thm]{Assumption}
\newtheorem{prob}[thm]{Problem}
\newtheorem{proposal}[thm]{Proposal}
\newtheorem{difficulty}[thm]{Difficulty}
\numberwithin{equation}{section}
\begin{document}
\title[categorification of invariants]{Categorification of invariants in 
gauge theory and sypmplectic geometry}
\author{Kenji Fukaya}

\address{Simons Center for Geometry and Physics,
State University of New York, Stony Brook, NY 11794-3636 U.S.A.
\& Center for Geometry and Physics, Institute for Basic Sciences (IBS), Pohang, Korea} \email{kfukaya@scgp.stonybrook.edu}

\begin{abstract}
This is a mixture of survey article and research anouncement.
We discuss Instanton Floer homology for 3 manifolds with boundary. We also discuss
a categorification of the Lagrangian Floer theory using the unobstructed 
immersed Lagrangian correspondence as a morphism in the category of 
symplectic manifolds.
\par
During the year 1998-2012,
those problems have been studied emphasising the ideas from analysis 
such as degeneration and adiabatic  limit (Instanton Floer homology) 
and strip shrinking (Lagrangian correspondence).
Recently we found that replacing those analytic approach 
by a combination of cobordism type argument and 
homological algebra, we can resolve various difficulties 
in the analytic approach.
It thus solves various problems and also simplify many of the 
proofs.
\end{abstract}

\thanks{The author thanks to his joint workers especially 
Aliakebar Daemi, J. Evans, Y. Likili, Y.G.-Oh, H. Ohta and K. Ono for various useful discussions related 
to this topics.}

\maketitle
\tableofcontents

\section{Introduction and review}
\label{section:intro}

The research defining invariants by using moduli spaces in 
differential geometry and topology started around 1980's. 
One of its first example is Donaldson's polynomial invariant of 
smooth 4 manifolds \cite{Don1}.
Various `quantum' invariants of knots which appeared around the same time 
have similar flavor and actually they turn out to be 
closely related to each other.
(Instanton) Floer homology of 3 manifolds (homology 3 spheres) appeared 
late 1980's \cite{fl1} and it was soon realized that Instanton Floer homology provides the basic 
frame work to define a
relative version of Donaldson invariant.
The notion of topological field theory 
was introduced by Witten \cite{witten1} inspired by this relative 
Donaldson invariant.
Soon after that Witten \cite{witten2} found an 
invariant of 3 manifolds (possibly equipped with knot and link) and its relative 
version. This is a generalization of quantum invariant of knot.
The relative version of Witten's invariant uses conformal block as 
its 2 dimensional counterpart.
Segal \cite{seg,seg2} introduced categorical formulation of 
conformal field theory and of several related theories. 
Since then various categorifications 
have been introduced and studied by many 
mathematicians.
In this article the author surveys 
some of them where $A_{\infty}$
category appears.
\par
The gauge theory invariant we discuss 
in this article is one in the column $n=4$,
of the next table.
\par\medskip
\begin{center}
\begin{tabular}{|c|c|c|c|}
\hline
 & invariants & Case $n=4$ & Case $n=3$
\\
\hline
$n$ & number &
Donaldson invariant
& Witten's invariant \\
$n-1$ & group &
Floer homology &  Conformal block\\
$n-2$ & category &
$\frak{Fuk}(R(\Sigma))$ &  representation of loop group \\
\hline
\end{tabular}
\par\medskip
Table 1
\end{center}
\par\medskip
We begin with a quick review of 4 and 3 dimensional invariants.

\subsection{Donaldson invariant}
\label{subsec:Donaldson invariant}

Let $X$ be an oriented 4 manifold
and $\mathcal P_X \to X$  either a principal $SO(3)$
or $SU(2)$ bundle.
(We denote $G =SO(3)$ or $SU(2)$.)
We take a Riemannian metric on $X$, which induces Hodege $*$ 
operator on differential $k$ forms.
$$
* : \Omega^k(X) \to \Omega^{4-k}(X).
$$
On $2$ forms we have $** = 1$.
Therefore $\Omega^2(X) $ is decomposed into a direct 
sum
$$
\Omega^2(X) =\Omega_+^2(X) \oplus \Omega^2_-(X),
\qquad \Omega_{\pm}^2(X) = 
\{  u \in \Omega^2(X) \mid  * u = \pm u\}.
$$
Let $ad(\mathcal P_X) =  \mathcal P_X \times_G \frak g$ be the Lie algebra ($
\frak g = so(3)$ or $su(2)$) bundle 
associated to $\mathcal P_X$ by the adjoint representation $G \to {\rm Aut}(\frak g)$. 
For a connection $A$ of $\mathcal P_X$ its curvature $F_A$ is a section of 
$\Omega^2(X) \otimes ad(\mathcal P_X)$.
We decompose it to 
$$
F_A = F_A^+ + F_A^-
$$
where $F_A^{\pm}$ is a section of $\Omega^2_{\pm}(X) \otimes ad(\mathcal P_X)$.
\par
A connection $A$ is called an Anti-Self-Dual (or ASD) connection if $F_A^+ = 0$.
\par
We denote by $\mathcal A(\mathcal P_X)$ the set of all (smooth) connections 
on $\mathcal P_X$ and $\mathcal G(\mathcal P_X)$ the set of all (smooth)
gauge transformations of $\mathcal P_X$.
(The later is the set of all smooth sections of the  bundle $Ad(\mathcal P_X)
= \mathcal P_X \times_G G$ which is associated 
to $\mathcal P_X$ by the adjoint action of $G$ on $G$.)
The group $\mathcal G(\mathcal P_X)$ acts on $\mathcal A(\mathcal P_X)$ and we denote by $\mathcal B(\mathcal P_X)$
the quotient space.
\par
We denote by $\mathcal M(\mathcal P_X) 
\subset \mathcal B(\mathcal P_X)$ the set of 
all $\mathcal G(\mathcal P_X)$ equivalence classes of ASD connections.
\par
In the simplest case, the Donaldson invariant of $X$ is the order 
of the set $\mathcal M(\mathcal P_X)$
(counted with appropriate sign), and is an integer.
More generally it is regarded as a polynomial map
$
H^*(\mathcal B(\mathcal P_X)) \to \Z
$
obtained by 
\begin{equation}\label{integ11}
c \mapsto \int_{\mathcal M(\mathcal P_X)}c.
\end{equation}
Actually since $\mathcal M(\mathcal P_X)$ is non-compact, 
we need to study the behavior of the cohomology class $c$ 
at infinity of $\mathcal M(\mathcal P_X)$, carefully.
Another problem is that $\mathcal M(\mathcal P_X)$ has 
in general a singularity.
We do not discuss these points in this section.
Donaldson used a map 
$
\mu : H_2(X) \to H^2(\mathcal B(\mathcal P_X)).
$
This map is defined by the slant product $c \mapsto p_1/c$, where 
$p_1$ is 1st Pontriagin or second Chern class of the 
universal bundle on $\mathcal B(\mathcal P_X) \times X$.
\par
On the subring generated by the image of this 
map $\mu$, the integration (\ref{integ11}) behaves nicely and defines an invariant.
(We need to assume that the number $b_2^+$ which is the 
sum of the multiplicities of positive eigenvalues 
of the intersection form on $H_2(X;\Q)$, is not smaller than $2$, 
for this invariant to be well defined.)
In that case, we have a multi-linear map 
on $H_2(X)$ which is called Donaldson's polynomial invariant.
We denote it by
\begin{equation}\label{doninv}
Z(c_1,\dots,c_k;\mathcal P_X) 
=
\int_{\mathcal P_X}\mu(c_1) \wedge \dots \wedge \mu(c_k)
\in \Z
\end{equation}
for $c_i \in H_2(X)$.
Note the integration makes sense only when
$$
\dim \mathcal M(\mathcal P_X) = 2k, 
$$
($\deg c_i = 2$). The dimension of 
$\mathcal M(\mathcal P_X)$ is determined by the seond Chern (or 
the first Pontriagin) number of $\mathcal P_X$.
So if we fix the second Stiefel-Whiteney class of $\mathcal P_X$ the 
isomorphism class of bundle $\mathcal P_X$ 
for which (\ref{doninv}) can be nonzero  is determined by $k$.
So we omit $\mathcal P_X$ and write $Z(c_1,\dots,c_k)$
sometimes.

\subsection{Floer homology (Instanton homology)}
\label{subsec:Floer homology}

Let $M$ be a 3 manifold and $\mathcal P_M$
a principal $G = SO(3)$ or $SU(2)$ bundle on it.
\begin{assump}\label{assumption11}
We assume that one of the following two conditions is satisfied.
\begin{enumerate}
\item
$G = SO(3)$ and $w^2(\mathcal P_M) \ne 0 \in H^2(M;\Z_2)$.
(Here $w^2(\mathcal E_P)$ is the second Stiefel-Whiteney class.)
\item
$G=SU(2)$ and $H(M;\Z) \cong H(S^3;\Z)$.
\end{enumerate}
\end{assump}
The notation $\mathcal A(\mathcal P_M)$, $\mathcal G(\mathcal P_M)$, $\mathcal B(\mathcal P_M)$
are defined in the same way as the 4 dimensional case.
We denote by $R(M;\mathcal P_M) \subset \mathcal B(\mathcal P_M)$ the set of all flat connections.
\par
We assume the following for the simplicity of description.
\begin{assump}\label{assum12}
\begin{enumerate}
\item
The set $R(M;\mathcal P_M)$ is a finite set.
\item
For any $[a] \in R(M;\mathcal P_M)$ the cohomology group
$
H^1(M;ad_a(\mathcal P_M))
$
vanishes.
Here the first cohomology group $H^1(M;ad_a(\mathcal P_M))$ is defined by the 
complex 
$$
ad\, \mathcal P_M \otimes \Omega^0 \overset{d_a }\longrightarrow ad\, \mathcal P_M \otimes \Omega^{1}
 \overset{d_a }\longrightarrow ad\, \mathcal P_M \otimes \Omega^{2}.
$$
(Note $d_a \circ d_a = 0$ since $F_a = 0$.)
\end{enumerate}
\end{assump}
\begin{rem}
We can remove this assumption by appropriately perturbing the defining equation 
$F_a = 0$ of $R(M;\mathcal P_M)$ in a way similar to 
\cite{Don2,fl1,He}.
\end{rem}
In case Assumption \ref{assumption11} (2) is satisfied we put
$R_0(M;\mathcal P_M) = R(M;\mathcal P_M) \setminus \{[0]\}$, 
where $[0]$ is the gauge equivalence class of the trivial connection.
In case of Assumption \ref{assumption11} (1), we put 
$R_0(M;\mathcal P_M) = R(M;\mathcal P_M)$.
\par
We define $\Z_2$ vector space $CF(M;\mathcal P_M)$ whose basis 
is identified with $R(M;\mathcal P_M)$.
We define a boundary operator
$$
\partial : CF(M;\mathcal P_M) \to CF(M;\mathcal P_M)
$$
as follows.
Let $[a], [b] \in R_0(M;\mathcal P_M)$. We fix their representatives $a, b$.
We consider the set of connections $A$ of the 
bundle $\mathcal P_M \times \R$ on $M \times \R$
with the following properties.
We use $\tau$ as the coordinate of  $\R$.
\begin{enumerate}
\item[(IF.1)]
$F_A^+ = 0$.
\item[(IF.2)]
The $L^2$ norm of the curvature 
$$
\int_{M\times \R} \Vert F_A\Vert^2 {\rm vol}_M d\tau
$$
is finite.
\item[(IF.3)]
We require
$$
\lim_{\tau \to - \infty} A = a, \qquad
\lim_{\tau \to + \infty} A = b.
$$
\end{enumerate}
\begin{rem}
We can use Assumption \ref{assum12} (2) to show that 
if  (IF.3) is satisfied then the convergence is automatically 
of exponential order.
\end{rem}
We denote by $\widetilde{\mathcal M}(M\times \R;a,b)$ the set of 
all gauge equivalence classes of the connections $A$
satisfying the above conditions (IF.1), (IF.2), (IF.3).
\par
The $\R$ action induced by the translation of $\R$ factor in $M \times \R$ 
induces an $\R$ action on $\widetilde{\mathcal M}(M\times \R;a,b)$.
We denote the quotient space by this action by ${\mathcal M}(M\times \R;a,b)$.
\begin{thm}\label{thm1-5}{\rm(Floer)}
We can define a map 
$\mu : R_0(M;\mathcal P_M) \to \Z_4$ ($SO(3)$ case) or $\Z_8$ ($SU(2)$ case)
such that:
\begin{enumerate}
\item ${\mathcal M}(M\times \R;a,b)$ is decomposed into pieces
${\mathcal M}(M\times \R;a,b;k)$ where $k+1$ is a natural number congruent to 
$\mu(b) - \mu(a)$.
\item
By `generic' perturbation we may assume that  ${\mathcal M}(M\times \R;a,b;k)$
is compactified to a manifold with corners of dimention $k$,
outside the singularity set of codimension $\ge 2$.
\item
Moreover the boundary of ${\mathcal M}(M\times \R;a,b;k)$ is identified 
with the disjoint union of the direct product
\begin{equation}\label{neweq12}
{\mathcal M}(M\times \R;a,c;k_1) \times {\mathcal M}(M\times \R;c,b;k_2)
\end{equation}
where $c \in R_0(M;\mathcal P_M)$ and $k_1 + k_2 + 1 = k$.
\end{enumerate}
\end{thm}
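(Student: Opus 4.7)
The plan follows Floer's original strategy, resting on three pillars: index theory for the grading, Uhlenbeck compactness combined with holonomy perturbations for the smooth manifold structure, and a gluing theorem for the boundary identification.

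First, for the grading $\mu$ in part (1): I would define $\mu(b) - \mu(a)$ as the Fredholm index of the linearized ASD operator
$D_A = -d_A^* \oplus d_A^+$
acting from $\Omega^1 \otimes ad(\mathcal P_M)$ to $(\Omega^0 \oplus \Omega^2_+) \otimes ad(\mathcal P_M)$ on $M \times \R$, between weighted Sobolev spaces adapted to the cylindrical ends; equivalently, as the spectral flow of the Hessian of the Chern--Simons functional along a path from $a$ to $b$. This is well-defined only modulo the index shift produced by inserting a non-trivial bundle at a point, which corresponds to the quantization of Chern--Simons as $\R / 8\pi^2 \Z$ in the $SU(2)$ case, respectively $\R / 4\pi^2\Z$ in the $SO(3)$ case with $w_2 \ne 0$, yielding $\Z_8$ and $\Z_4$ indeterminacies. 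Assumption \ref{assum12} (2) --- acyclicity of each flat endpoint --- is exactly what ensures $D_A$ is Fredholm on weighted spaces with exponential decay at the ends. The integer $k+1$ in part (1) is this Fredholm index on $\widetilde{\mathcal M}(M \times \R; a, b)$, and the $\R$-translation accounts for the final $-1$ giving dimension $k$ on ${\mathcal M}(M \times \R; a, b; k)$.

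Second, for part (2): once ${\mathcal M}(M \times \R; a, b; k)$ is cut out transversally, Fredholm theory yields a smooth manifold of dimension $k$. Transversality is achieved by holonomy perturbations of $F_A^+ = 0$ in the style of Floer and Donaldson: one introduces a Banach family of perturbations supported in a fixed compact region of $M \times \R$ and invokes a Sard--Smale argument to produce a residual set of perturbations for which the linearization is surjective at every irreducible solution. Reducible or otherwise non-regular loci form the singular set of codimension $\geq 2$. Compactness follows from Uhlenbeck's theorem together with the energy bound (IF.2), removable singularities for finite-energy ASD connections, and the cylindrical-end analysis that forces exponential decay to flat limits in $R(M;\mathcal P_M)$, which is finite by Assumption \ref{assum12} (1).

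Third, for part (3): a non-convergent sequence in ${\mathcal M}(M \times \R; a, b; k)$ must, after $\R$-translations, gauge transformations and passage to a subsequence, converge to a broken trajectory with intermediate flat limits $a = c_0, c_1, \dots, c_n = b$, possibly together with instanton bubbles at finitely many points. Each bubble carries energy $\geq 8\pi^2$ and thus raises the index by at least $8$ (resp.\ $4$), so bubbling strictly raises the codimension by at least $2$ and cannot occur on the boundary of a $k$-dimensional piece. The only codimension-$1$ degeneration is therefore a single breaking into ASD trajectories $a \to c$ and $c \to b$, yielding the product in \eqref{neweq12} with $k_1 + k_2 + 1 = k$. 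Conversely, every such broken trajectory is a limit of glued trajectories: from a broken pair and a gluing parameter $T \gg 1$, one builds a pre-glued approximate ASD connection and corrects it to an exact solution by Newton iteration using surjectivity of the linearization, which parametrizes a smooth collar of the broken stratum.

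The main obstacle, as usual in this subject, is the gluing analysis paired with the codimension-$2$ claim for bubbling. The gluing step requires working in weighted Sobolev spaces whose weights interact delicately with the pre-glued asymptotics, and verifying uniform-in-$T$ convergence of Newton iteration to produce a genuine smooth collar; coherently matching such collars under iterated breaking is what ultimately gives the manifold-with-corners structure. Simultaneously, one must confirm that the holonomy perturbations preserve the energy quantization excluding bubbling at the boundary, and that no spurious low-energy solutions are introduced by the perturbation near the critical energy threshold.
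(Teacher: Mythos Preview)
The paper does not supply its own proof of this theorem: it is stated in the review section as a result of Floer, with the attribution ``(Floer)'' and a reference to \cite{fl1}, and the text moves on immediately to use it in defining $\partial$. So there is no paper-proof to compare against.

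Your outline is the standard argument and is correct at the level of detail appropriate to a sketch: spectral flow (equivalently the Fredholm index of the linearized ASD operator on the cylinder) for the relative grading with its $\Z_8$ or $\Z_4$ ambiguity coming from gluing in an instanton; holonomy perturbations plus Sard--Smale for transversality; Uhlenbeck compactness with energy quantization to push bubbling into codimension $\ge 2$; and the gluing theorem for the collar neighbourhood of the broken strata. This is exactly Floer's strategy as carried out in \cite{fl1} and exposited in \cite{Don5}. One small point worth tightening: your discussion of why bubbling is codimension $\ge 2$ says a bubble ``raises the index by at least $8$ (resp.\ $4$)'', but the relevant statement for the manifold-with-corners claim is about the \emph{dimension drop} in the stratum containing the bubbled limit, not about congruences --- you should phrase it as the bubble carrying index (and hence dimension) at least the minimal instanton index, so the remaining piece has strictly smaller dimension by at least that amount. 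Otherwise the plan is sound.
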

Now we define 
$$
\langle \partial [a], [b]\rangle
\equiv \# {\mathcal M}(M\times \R;a,c;0) \mod 2,
$$
and 
\begin{equation}
\partial [a]
= \sum_{[b]; \mu(b) = \mu(a) -1} \langle \partial([a]), [b]\rangle[b].
\end{equation}
Theorem \ref{thm1-5} (3) implies that the union of the spaces  (\ref{neweq12})
over $c$ and $k_1,k_2$ with $k_1 + k_2$ given is a boundary of 
some spaces. Especially in case $k_1 = k_2 =0$ the union of 
(\ref{neweq12})
over $c$ is a boundary of $1$ dimensional manifold and so its order is even.
It implies:
$$
\sum_c \# \langle \partial [a], [c]\rangle \langle \partial [c], [b]\rangle = 0.
$$
Namely 
$\partial \partial [a] = 0$.
\begin{defn}\label{defn16}
$$
HF(M;\mathcal P_M) \cong 
\frac{{\rm Ker}\,\,\, \left(\partial : CF(M;\mathcal P_M) \to CF(M;\mathcal P_M\right)}
{{\rm{Im}}\,\,\, \left(\partial : CF(M;\mathcal P_M) \to CF(M;\mathcal P_M)\right)}.
$$
We call this group ($\Z_2$ vector space) the 
{\it Instanton Floer homology} of $(M,\mathcal P_M)$.
Actually we can put orientation of the moduli space we use and 
then define Floer homology group over $\Z$.
\end{defn}
Floer (\cite{fl1, fl2}) proved that the group $HF(M;\mathcal P_M)$ is independent of various 
choices made in the definitions.
\par
The idea behind this definition is to study the following Chern-Simons functional.
We consider the case when $\mathcal P_M$ is an $SU(2)$ bundle
which is necessary trivial as a smooth $SU(2)$ bundle.
We fix a trivialization and then an element of $\mathcal A(\mathcal P_M)$
is identified with an $su(2)$ valued one form $a$.
We may regard it as a $2\times 2$ matrix valued one form.
We then put
\begin{equation}
\frak{cs}(a) = \frac{1}{4 \pi^2}\int_M Tr\left(\frac{1}{2} a\wedge da + \frac{1}{3} a \wedge a\wedge
a\right).
\end{equation}
This functional descents to a map 
$\mathcal B(\mathcal P_M) \to \R/\Z$.
In fact if we regard a gauge transformation as a map $M \to SU(2)$ we have
$$
\frak{cs}(g^*a) = \frak{cs}(a) + \deg g.
$$
On the other hand any connection $A$ of 
$\mathcal P_M \times \R$ on $M \times \R$ can be transformed to a 
connection without $d\tau$ component by a gauge transformation.
(Note $\tau$ is the coordinate of $\R$.)
We call it the {\it temporal gauge}.
If we take the temporal gauge and $A\vert_{M \times \{\tau\}} = a(\tau)$
then the equation $F_A^+ = 0$ is equivalent to
\begin{equation}
\frac{d}{d\tau} a(\tau)  = {\rm grad}_{a(\tau)}\frak{cs}.
\end{equation} 
Here the right hand side is defined by
$$
 \langle{\rm grad}_{a}\frak{cs}, a' \rangle
 =
 \left.\frac{d}{ds} \frak{cs}(a+sa')\right\vert_{s=0}.
$$
($ \langle \cdot, \cdot \rangle$ is the $L^2$ inner product.)
So $HF(M;\mathcal P_M)$ is regarded as a Morse homology 
of $\frak{cs}$.
There is a similar functional in the $SO(3)$ case.

\subsection{Relative Donaldson invariant}
\label{subsec:Relative Donaldson}

The relation between Donaldson invariant and 
Floer homology is described as follows.
\par
Let $X_1$ and  $X_2$ be oriented 4 manifolds with boundary  $M$ and $-M$, 
respectively.
We glue $X_1$ and $X_2$ at $M$ to obtain a closed oriented 4 manifold $X$.
We consider the case $H_1(M;\Q) = 0$. Then
\begin{equation}
H_2(X;\Q) = H_2(X_1;\Q) \oplus H_2(X_2;\Q).
\end{equation}
We also assume $b_2^+(X_i)$, the sum of multiplicities of positive eigenvalues 
of the intersection form on $H_2(X_i;\Q)$, is at least $2$.
\par
We remark that 
$$
R_0(M;\mathcal P_M) \cong R_0(-M;\mathcal P_M).
$$
The map $\R \times M \to \R \times -M$  which sends $(\tau,x)$ to $(-\tau,x)$
is an orientation preserving diffeomorphism. 
So 
$$
\langle \partial_M a, b\rangle = \langle  a, \partial_{-M}b\rangle
$$
Therefore the boundary operator $\partial_{-M}$ is the dual to 
$\partial_{M}$. We thus obtain a pairing:
$$
\langle \cdot,\cdot \rangle : HF(M;\mathcal P_M) \times HF(-M;\mathcal P_M) \to \Z.
$$
\begin{thm}{\rm (Floer-Donaldson, 
See \cite{fu05, Don5})}\label{thm17}
\begin{enumerate}
\item
If $\partial X_1 = M$ and $\left.\mathcal P_{X_1}\right\vert_{M} = \mathcal P_M$, 
then there exists a multilinear map
$$
Z(\cdot;{X_1},\mathcal P_{X_1}) : H_2(X_1;\Z)^{\otimes k} \to HF(M;\mathcal P_M).
$$
\item
In the situation we mentioned at the beginning of this subsection  
we have
\begin{equation}\label{form18}
\aligned
&\langle Z(c_{1,1},\dots,c_{1,k_1};\mathcal P_{X_1}),
Z(c_{2,1},\dots,c_{2,k_2};\mathcal P_{X_2}) \rangle \\
&=
Z(c_{1,1},\dots,c_{1,k_1},c_{2,1},\dots,c_{2,k_2};\mathcal P_{X}).
\endaligned
\end{equation}
\end{enumerate}
\end{thm}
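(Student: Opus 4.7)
The plan is to build $Z(\,\cdot\,;X_1,\mathcal{P}_{X_1})$ out of moduli spaces of finite-energy ASD connections on the completed 4-manifold with cylindrical end, and then prove the pairing formula by a neck-stretching degeneration.

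For part (1), first I would form the cylindrical completion $\widehat X_1 = X_1\cup_M (M\times[0,\infty))$, equipped with a product metric on the end and an extension of $\mathcal{P}_{X_1}$ by $\mathcal{P}_M\times[0,\infty)$. For each $[a]\in R_0(M;\mathcal{P}_M)$, define $\mathcal M(X_1;a)$ to be the moduli of gauge-equivalence classes of ASD connections $A$ on $\widehat X_1$ with $\int \|F_A\|^2<\infty$ and $\lim_{\tau\to\infty}A|_{M\times\{\tau\}}=a$. Assumption 1.3(2) (together with the exponential-decay remark following Theorem 1.5) implies that such connections converge exponentially, so the standard Fredholm package (weighted Sobolev spaces, Atiyah--Patodi--Singer index computation) applies and yields a decomposition $\mathcal{M}(X_1;a)=\bigsqcup_d\mathcal{M}(X_1;a;d)$ with $d$ controlled by $\mu(a)$ and the Pontryagin number of $\mathcal{P}_{X_1}$. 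After a generic perturbation of the ASD equation (supported away from the end so as not to disturb the Floer data), the top strata are smooth and orientable. I would then set
\[
Z_a(c_1,\dots,c_k;X_1)=\int_{\mathcal{M}(X_1;a;2k)}\mu(c_1)\wedge\cdots\wedge\mu(c_k),
\]
using slant product with the Pontryagin class of the universal bundle as in Section 1.1, and define
\[
Z(c_1,\dots,c_k;X_1,\mathcal{P}_{X_1})=\sum_{[a]\in R_0(M;\mathcal{P}_M)} Z_a(c_1,\dots,c_k;X_1)\,[a].
\]
To show $\partial Z=0$ (so that $Z$ is a well-defined Floer homology class), I would apply Theorem 1.5-type compactification to $\mathcal{M}(X_1;a;2k+1)$: its codimension-one boundary consists, up to lower-stratum issues, of the products $\mathcal{M}(X_1;c;2k)\times\mathcal{M}(M\times\mathbb{R};c,a;0)$. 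Integrating $\bigwedge\mu(c_i)$ over this boundary (which vanishes because it bounds) reproduces $\langle \partial Z,[a]\rangle$ up to sign, giving $\partial Z=0$. Independence of the choices (metric, perturbation, base point of the universal bundle) is established by a chain-homotopy argument using 1-parameter families of ASD moduli spaces on $X_1$.

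For part (2), I would introduce the family of Riemannian metrics $g_T$ on $X=X_1\cup_M X_2$ that inserts a cylinder $M\times[-T,T]$ of length $2T$ along $M$. The classical neck-stretching / broken-trajectory analysis (Donaldson--Floer, Taubes gluing) applies: because $H_1(M;\mathbb{Q})=0$ and each $b_2^+(X_i)\ge 2$, bubbling and reducibles can be avoided in the dimensions that matter. As $T\to\infty$, every sequence of ASD connections on $(X,g_T)$ has a subsequence converging (after rescaling in the neck) to a broken object consisting of an ASD connection on $\widehat X_1$ asymptotic to some flat $[a]$, an unparametrized chain of instantons on $M\times\mathbb{R}$ connecting $[a]$ to itself, and an ASD connection on $\widehat X_2$ asymptotic to the same $[a]$ (viewed with reversed orientation). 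In the zero-dimensional and top-dimensional cases needed for the formula, the gluing theorem is a bijection: for $T$ sufficiently large,
\[
\mathcal{M}(X,g_T;d)\;\cong\;\bigsqcup_{[a]\in R_0(M;\mathcal{P}_M)}\mathcal{M}(X_1;a;d_1)\times \mathcal{M}(X_2;-M;a;d_2),
\]
with $d=d_1+d_2$ determined by the dimension bookkeeping and the matching $\mu(a)$. Because each $\mu(c_i)$ is supported on $X_1$ or on $X_2$, the universal Pontryagin class on $\mathcal{B}(\mathcal{P}_X)$ restricts on this fiber product to the exterior product of the corresponding classes on the two factors. Integrating and summing over $[a]$ yields
\[
Z(c_{1,1},\dots,c_{2,k_2};\mathcal{P}_X)=\sum_{[a]}Z_a(c_{1,\cdot};X_1)\,Z_a(c_{2,\cdot};X_2)=\langle Z(X_1,\mathcal{P}_{X_1}),Z(X_2,\mathcal{P}_{X_2})\rangle,
\]
which is (1.8).

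The main obstacle is the analytic content of the neck-stretching step: proving both (a) sequential compactness modulo bubbling and drifting instantons on the neck, with sufficient control to identify the limit with a flat connection on $M$, and (b) a surjective gluing construction producing genuine ASD connections on $(X,g_T)$ from each element of the fiber product, with correct signs. Matching orientations between $Z_a(X_1)$, $Z_a(X_2)$ and the absolute moduli space on $X$ is the most delicate bookkeeping. These are precisely the analytic difficulties that the cobordism/homological-algebra reformulation promised later in the paper is designed to bypass.
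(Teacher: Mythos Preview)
Your proposal is correct and follows essentially the same approach as the paper's own sketch: define the relative invariant by integrating $\mu$-classes over moduli spaces $\mathcal{M}(X_1;a)$ of finite-energy ASD connections on the cylindrical completion with prescribed flat limit, verify it is a Floer cycle via the boundary decomposition $\partial\mathcal{M}(X_1;a)=\bigcup_b\mathcal{M}(X_1;b)\times\mathcal{M}(M\times\R;b,a)$, and prove the pairing formula by stretching the neck in $X(T)$ and invoking the limit $\lim_{T\to\infty}\mathcal{M}(X(T))=\bigcup_a\mathcal{M}(X_1;a)\times\mathcal{M}(X_2;a)$. One small inaccuracy: in your description of the broken limit the intermediate chain of instantons on $M\times\R$ connects possibly distinct flat connections $[a]\to[a']$, not $[a]$ to itself; and your closing remark that the later cobordism/homological-algebra machinery in the paper is meant to bypass \emph{this} neck-stretching is a misreading---that machinery addresses the harder $3{+}2$-dimensional problem (Lagrangian boundary conditions, immersed Lagrangians), not the classical $4{+}3$ gluing you have outlined here.
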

The construction of relative invariant in Theorem \ref{thm17} roughly 
goes as follows.
We take a Riemannian metric on $X_1 \setminus \partial X_1$ such that 
it is isometric to the direct product 
$M \times [0,\infty)$ outside a compact set.
Let $a$ be a flat connection with  $[a] \in R_0(M;\mathcal P_M)$.
\par
We consider the set of connections $A$ of $\mathcal P_X$ such that
\begin{enumerate}
\item
$F_A^+ = 0$.
\item
The $L^2$ norm of the curvature 
$
\int_{X_1} \Vert F_A\Vert^2 {\rm vol}_{X_1}
$
is finite.
\item
We require
$
\lim_{\tau \to + \infty} A\vert_{M\times \{\tau\}} = a.
$
\end{enumerate}
We denote the set of gauge equivalence classes of such $A$ by
$\mathcal M(X_1;a;\mathcal P_{X_1})$ and define
$$
\aligned
&Z(c_{1,1},\dots,c_{1,k_1};{X_1},\mathcal P_{X_1}) \\
&=
\sum_{a  \in R_0(M;\mathcal P_M)} \left(\int_{\mathcal M(X_1;a;\mathcal P_{X_1})}
\mu(c_{1,1}) \wedge \dots \wedge \mu(c_{1,k_1})\right) [a].
\endaligned
$$
We can show that this is a cycle in $CF(M;\mathcal P_M)$ by 
studying the boundary of the moduli space $\mathcal M(X_1;a;\mathcal P_{X_1})$,
that is,
$$
\partial \mathcal M(X_1;a;\mathcal P_{X_1})
= 
\bigcup_b \mathcal M(X_1;b;\mathcal P_{X_1})
\times \mathcal M(M\times \R;b,a).
$$
\par
To show (\ref{form18}) we consider the following sequence of metrics on $X$.
We take compact subsets $K_i$ of $X_i$ such that
$$
{\rm Int}\, X_1 \setminus K_1 \cong M \times (0,\infty),
\qquad {\rm Int}\, X_2 \setminus K_2 \cong M \times (-\infty,0).
$$
We put 
$$
X(T) =  (K_1 \cup  M \times (0,T/2]) \cup (M \times [-T/2,0] \cup K_2)
$$
where we identify $M \times \{T/2\} \cong M \times \{-T/2\}$.
$X(T)$ is diffeomorphic to $X$ and has an obvious Riemannian metric.
So we obtain the moduli space $\mathcal M(X(T);\mathcal P_X)$.
We have
$$
Z(c_{1,1},\dots,c_{1,k_1},c_{2,1},\dots,c_{2,k_2};\mathcal P_{X})
=
\int_{\mathcal M(X(T);\mathcal P_X)} \mu(c_{1,1}) \wedge \dots \wedge \mu(c_{2,k_2})
$$
for any $T$.\footnote{More precisely we cannot expect that 
$\mathcal M(X(T);\mathcal P_X)$ is a smooth manifold for arbitrary $T$.
However we can expect that it is a smooth manifold for $T$ outside a finite set.
But the union of $\mathcal M(X(T);\mathcal P_X)$ for $T \in [T_1,T_2]$ is
again a manifold. So the integral for $T = T_1$ and $T = T_2$ coincides by 
Stokes' theorem.}
\par
Then (\ref{form18}) will be a consequence of the next equality.

\begin{equation}
\lim_{T\to \infty} \mathcal M(X(T);\mathcal P_X)
=
\bigcup_{a \in R_0(M;\mathcal P_M)}
\mathcal M(X_1;a;\mathcal P_{X_1})
\times
\mathcal M(X_2;a;\mathcal P_{X_2}).
\end{equation}

\section{Invariant in dimension 4-3-2}
\label{section:dimension 4-3-2}

The idea to extend the story of subsections 
\ref{subsec:Donaldson invariant}, \ref{subsec:Floer homology}, \ref{subsec:Relative Donaldson}
so that it includes dimension 2 was studied
by various mathematicians in 1990's.
(See for example \cite{fu1,fu2}.)
It can be summarized as follows.
\begin{prob}\label{prob21}
\begin{enumerate}
\item
For each pair of an oriented 2 manifold $\Sigma$ and a 
principal $G$-bundle $\mathcal P_{\Sigma}$ on it, associate a 
category $\mathcal C(\Sigma;\mathcal P_{\Sigma})$,
such that for each two objects of $\mathcal C(\Sigma;\mathcal P_{\Sigma})$ 
the set of morphisms between them is an abelian group.
\item
For any pair $(M,\mathcal P_{M})$ of an oriented $3$ manifold $M$ with boundary 
and a principal $G$-bundle $\mathcal P_{M}$ on it, 
associate an object 
$HF_{(M,\mathcal P_{M})}$ of $\mathcal C(\Sigma;\mathcal P_M\vert_{\Sigma})$,
where $\Sigma = \partial M$.
\item
Let $(M_1,\mathcal P_{M_1})$, $(M_2,\mathcal P_{M_2})$ be pairs as in (2)
such that
$\partial M_1 = - \partial M_2 = \Sigma$,
$\mathcal P_{\Sigma} = \mathcal P_{M_1}\vert_{\Sigma} = 
\mathcal P_{M_2}\vert_{\Sigma}$.
We glue them to obtain $(M,\mathcal P_M)$. Then show:
\begin{equation}
HF(M;\mathcal P_M) = \mathcal C(HF_{(M_1,\mathcal P_{M_1})},
HF_{(M_2,\mathcal P_{M_2})}).
\end{equation}
Here the left hand side is the Instanton Floer homology as in Definition \ref{defn16}
and the right hand side is the set of morphisms in the category 
$\mathcal C(\Sigma;\mathcal P_M\vert_{\Sigma})$, which is an abelian group.
\end{enumerate}
\end{prob}
There is a formulation which include the case
$$
\partial (M,\mathcal P_{M})
=
-(\Sigma_1,\mathcal P_{\Sigma_1})
\sqcup
(\Sigma_2,\mathcal P_{\Sigma_2}).
$$
See Section \ref{DFcategori}.
We may join it with 4+3 dimensional picture 
so that we include the case of 4 manifold with corner.
\par
An idea to find such category $\mathcal C(\Sigma;\mathcal P_{\Sigma})$ 
is based on the fact that the space of all flat connections 
$R(\Sigma;\mathcal P_{\Sigma})$ has a symplectic structure, 
which we define below.
\par
Let $[\alpha] \in R(\Sigma;\mathcal P_{\Sigma})$.
The tangent space
$
T_{\alpha}R(\Sigma;\mathcal P_{\Sigma})
$
is identified with the first cohomology
$$
H^1(\Sigma,ad_{\alpha}(\mathcal P_{\Sigma}))
=
\frac{{\rm Ker} \left(d_{\alpha} : \mathcal P_M \otimes \Omega^{1}
 \overset{d_{\alpha}}\longrightarrow ad\, \mathcal P_M \otimes \Omega^{2}
 \right)}
{{\rm Im} \left(d_{\alpha} : \mathcal P_M  \overset{d_{\alpha}}\longrightarrow ad\, \mathcal P_M \otimes \Omega^{1}
 \right)}.
$$
The symplectic form $\omega$ at $
T_{\alpha}R(\Sigma;\mathcal P_{\Sigma})
\cong H^1(\Sigma,ad_{\alpha}(\mathcal P_{\Sigma}))$ is given by
$$
\omega([u],[v]) = \int_{\Sigma} Tr (u \wedge v).
$$
(See \cite{go}.)
We can prove that this 2 form $\omega$ is a closed two form 
based on the fact that $R(\Sigma;\mathcal P_{\Sigma})$ is regarded as a 
symplectic quotient 
$$
\mathcal A(\Sigma,\mathcal P_{\Sigma})
/\!/
\mathcal G(\Sigma,\mathcal P_{\Sigma}).
$$
In fact we may regard the curvature
$$
\alpha \mapsto F_{\alpha} \in C^{\infty}(\Sigma;ad_{\alpha} \mathcal P \otimes 
\Omega^2)
$$
as the moment map of the action of gauge transformation group 
$\mathcal G(\Sigma,\mathcal P_{\Sigma})$ on 
$\mathcal A(\Sigma,\mathcal P_{\Sigma})$.
(See \cite{AB}.)
\par
We next consider $(M,\mathcal P_{M})$ as in Problem \ref{prob21} (2).
By the same reason as Assumption \ref{assum12} 
we assume:
\begin{assump}\label{assum122}
\begin{enumerate}
\item
The set $R(M;\mathcal P_M)$ has a structure of a finite dimensional manifold.
\item
For any $[a] \in R(M;\mathcal P_M)$ the second cohomology
$
H^2(M;ad_a(\mathcal P_M))
$
vanishes.
Here the second cohomology group $H^2(M;ad_a(\mathcal P_M))$ is the 
cokernel of 
$$
d_{a} : ad\, \mathcal P_M \otimes \Omega^{1}
\longrightarrow ad\, \mathcal P_M \otimes \Omega^{2}.
$$
\end{enumerate}
\end{assump}
\begin{rem}
In Assumption \ref{assum12} we assumed all the cohomology groups 
vanish. In fact in case $\partial M = \emptyset$ 
(and $M$ is 3 dimensional) we have
$$
H^2(M;ad_a(\mathcal P_M))
\cong 
(H^1(M;ad_a(\mathcal P_M)))^*
$$
by Poincar\'e duality. So vanishing of 2nd cohomology
implies the vanishing of the 1st cohomology.
The zero-th cohomology vanishes if the 
connection is irreducible. (Namely the set of all gauge transformations 
which preserve the connection $a$ is zero dimensional.)
\par
We also remark that actually (2) implies (1).
\end{rem}
We then have the next lemma.
\begin{lem}\label{lem2424}
We assume $\partial(M,\mathcal P_M) = (\Sigma,\mathcal P_{\Sigma})$
and Assumption \ref{assum122}.
Let $i : R(M;\mathcal P_M) \to R(\Sigma;\mathcal P_{\Sigma})$
be the map induced by the restriction of the connection.
Then
$$
i^* \omega = 0.
$$
Here $\omega$ is the symplectic form of $R(\Sigma;\mathcal P_{\Sigma})$.
\end{lem}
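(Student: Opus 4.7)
The plan is to reduce the statement to a Stokes'-theorem computation on $M$. Under Assumption \ref{assum122}, the moduli space $R(M;\mathcal P_M)$ is a smooth manifold and its tangent space at $[a]$ is naturally identified with $H^1(M;ad_a(\mathcal P_M))$, i.e.\ with $d_a$-closed $ad\,\mathcal P_M$-valued $1$-forms modulo $d_a$-exact ones. The restriction map $i$ sends a connection on $M$ to its restriction to $\Sigma$, so its differential at $[a]$ is the map on cohomology induced by pulling back $ad\,\mathcal P_M$-valued $1$-forms to $\Sigma$; this is clearly well defined on cohomology because $d_a$ commutes with restriction.

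Given tangent vectors represented by $u,v \in \Omega^1(M;ad_a\mathcal P_M)$ with $d_a u = d_a v = 0$, the Goldman formula for $\omega$ gives
$$
(i^*\omega)([u],[v])
=
\int_\Sigma Tr\bigl(u\vert_\Sigma \wedge v\vert_\Sigma\bigr).
$$
Next I would apply Stokes' theorem to the scalar-valued $2$-form $Tr(u\wedge v)$ on $M$, whose boundary is $\Sigma$, obtaining
$$
\int_\Sigma Tr(u\vert_\Sigma \wedge v\vert_\Sigma)
=
\int_M d\,Tr(u \wedge v).
$$

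The remaining step is the Leibniz identity for the connection $a$. Because the trace form is $ad$-invariant, the connection terms cancel in $d\,Tr(u\wedge v)$ and one obtains the graded Leibniz rule
$$
d\,Tr(u \wedge v)
=
Tr(d_a u \wedge v) - Tr(u \wedge d_a v).
$$
Since $u$ and $v$ are $d_a$-closed, both terms vanish identically, hence the $M$-integral is zero, and therefore $i^*\omega = 0$ as desired. The only point requiring a little care is the sign in Leibniz and the verification that $Tr$ intertwines $d$ with $d_a$; neither is a real obstacle, so there is no hard step here. Finally, one should check that the answer is independent of the chosen representatives $u,v$: replacing $u$ by $u + d_a\varphi$ changes $u\vert_\Sigma$ by $d_{a\vert_\Sigma}(\varphi\vert_\Sigma)$, and the induced change in $\int_\Sigma Tr(u\wedge v)$ vanishes by another application of Stokes on $\Sigma$ together with $d_{a\vert_\Sigma}(v\vert_\Sigma)=0$, confirming that $i^*\omega$ is well defined and zero on cohomology.
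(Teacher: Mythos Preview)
Your proof is correct and is precisely the Stokes'-theorem argument the paper has in mind; the paper's own proof is the single line ``This is an immediate consequence of Stokes' theorem.'' You have simply spelled out the details (tangent identification, Leibniz rule, well-definedness) that the paper leaves implicit.
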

\begin{proof}
This is an immediate consequence of Stokes' theorem.
\end{proof}
Let $i(a) = \alpha$. We consider the exact sequence
\begin{equation}\label{exact22}
0 \rightarrow H^1(M;ad_a(\mathcal P_M)) \rightarrow 
H^1(\Sigma;ad_{\alpha}(\mathcal P_{\Sigma})) 
\rightarrow
H^2 (M,\Sigma;ad_a(\mathcal P_M))
\rightarrow 
0
\end{equation}
Note
$
H^2(M;ad_a(\mathcal P_M))
\cong 
H^1(M,\Sigma;ad_a(\mathcal P_M)) 
\cong 0
$
by Assumption \ref{assum122} (2) and Poincar\'e duality.
Moreover 
$H^1(M;ad_a(\mathcal P_M)) \cong 
H^2 (M,\Sigma;ad_a(\mathcal P_M))^*$ by Poincar\'e duality.
Thus (\ref{exact22}) implies that
\begin{equation}\label{form23}
\dim R(M;\mathcal P_M) 
= \frac{1}{2} \dim R(\Sigma;\mathcal P_{\Sigma})
\end{equation}
if $\partial M = \Sigma$.
\begin{cor}\label{cor25}
In the situation of Lemma \ref{lem2424}, 
$R(M;\mathcal P_M)$ is an immersed Lagrangian submanifold 
of $R(\Sigma;\mathcal P_{\Sigma})$ if 
$i : R(M;\mathcal P_M) \to R(\Sigma;\mathcal P_{\Sigma})$
is an immersion.
\end{cor}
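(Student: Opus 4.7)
The plan is to verify the two defining properties of an immersed Lagrangian submanifold: (i) the pullback of the symplectic form vanishes (isotropy), and (ii) the dimension of the submanifold is exactly half of the dimension of the ambient symplectic manifold. Both ingredients have already been essentially supplied in the text above the corollary, so the task is to assemble them.

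First, I would invoke Lemma \ref{lem2424} directly: since $\partial(M,\mathcal P_M) = (\Sigma,\mathcal P_{\Sigma})$, Stokes' theorem gives $i^*\omega = 0$. Because $i$ is assumed to be an immersion, this is exactly the statement that the image $i(R(M;\mathcal P_M))$ is isotropic as an immersed submanifold of $R(\Sigma;\mathcal P_{\Sigma})$. Concretely, at each point $[a]\in R(M;\mathcal P_M)$ with $\alpha = i(a)$, the differential $di_a$ identifies $T_{[a]}R(M;\mathcal P_M) \cong H^1(M;ad_a(\mathcal P_M))$ with a subspace of $T_\alpha R(\Sigma;\mathcal P_{\Sigma}) \cong H^1(\Sigma;ad_\alpha(\mathcal P_{\Sigma}))$, and Lemma \ref{lem2424} says $\omega$ vanishes on this subspace.

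Second, I would appeal to the dimension equality (\ref{form23}), namely
\[
\dim R(M;\mathcal P_M) = \tfrac{1}{2}\dim R(\Sigma;\mathcal P_{\Sigma}),
\]
which was derived from the long exact sequence (\ref{exact22}) together with Assumption \ref{assum122}(2) and Poincar\'e duality (the assumption kills $H^2(M;ad_a(\mathcal P_M))$, hence by duality also $H^1(M,\Sigma;ad_a(\mathcal P_M))$, and symmetrically pairs $H^1(M;ad_a(\mathcal P_M))$ with $H^2(M,\Sigma;ad_a(\mathcal P_M))$).

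Combining the two: the tangent space $di_a(T_{[a]}R(M;\mathcal P_M))$ is an isotropic subspace of the symplectic vector space $T_\alpha R(\Sigma;\mathcal P_{\Sigma})$ of exactly half its dimension, hence is Lagrangian. Since this holds at every point and $i$ is an immersion, $R(M;\mathcal P_M)$ is an immersed Lagrangian submanifold of $R(\Sigma;\mathcal P_{\Sigma})$. There is no serious obstacle here; the only point that deserves care is keeping straight that the isotropy statement is automatic from Stokes whereas the half-dimensionality is where Assumption \ref{assum122} and Poincar\'e duality are genuinely used. In particular, without the vanishing of $H^2(M;ad_a(\mathcal P_M))$ one would only get isotropy, not the Lagrangian property.
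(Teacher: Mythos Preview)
Your proposal is correct and matches the paper's approach exactly: the paper gives no separate proof of the corollary, treating it as an immediate consequence of Lemma~\ref{lem2424} (isotropy via Stokes) together with the half-dimension formula~(\ref{form23}) derived from the exact sequence~(\ref{exact22}) and Poincar\'e duality. Your write-up simply makes explicit what the paper leaves implicit.
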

We can again perturb the defining equation $F_a = 0$ of 
$R(M;\mathcal P_M)$ so that the assumption of Corollary \ref{cor25} 
is satisfied in the modified form.
Namely we obtain a Lagrangian immersion to 
$R(\Sigma;\mathcal P_{\Sigma})$ from a moduli space 
that is a perturbation of $R(M;\mathcal P_M)$.
This is proved by Herald \cite{He}. He also proved that the 
Lagrangian cobordism class of the perturbed immersed 
Lagrangian submanifold is independent of the choice of the perturbation.
\par
The above obserbations let the Donaldson make the next:
\begin{proposal}{(\rm Donaldson \cite{Don})}\label{Dprop}
The category $\mathcal C(\Sigma;\mathcal P_{\Sigma})$
is defined such that:
\begin{enumerate}
\item Its object is a Lagrangian submanifold of 
$R(\Sigma;\mathcal P_{\Sigma})$.
\item
The set of morphisms from $L_1$ to $L_2$ is the 
Lagrangian Floer homology $HF(L_1,L_2)$.
\item
The composition of the morphism is defined by 
counting the pseudo-holomorphic triangle as in 
Figure \ref{zu1} below.
\end{enumerate} 
\par
The first approximation of the object which we assign 
to $(M,\mathcal P_M)$ is the immersed Lagrangian 
submanifold $R(\Sigma;\mathcal P_{\Sigma})$.
\end{proposal}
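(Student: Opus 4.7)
The plan is to construct $\mathcal C(\Sigma;\mathcal P_\Sigma)$ as (the cohomological truncation of) the Fukaya $A_\infty$-category of the symplectic manifold $R(\Sigma;\mathcal P_\Sigma)$, and then to exhibit $R(M;\mathcal P_M)$ as an object of it. I would work first at the $A_\infty$ level with the full operations $\mu^k$ and pass to $HF$ only afterwards, so that the categorical axioms in the proposal (a strictly associative composition on the morphism groups) are inherited on cohomology.

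For the categorical construction itself, given embedded Lagrangians $L_1,L_2\subset R(\Sigma;\mathcal P_\Sigma)$ in general position, I would form the Floer complex $CF(L_1,L_2)$ generated by the intersection points $L_1\cap L_2$ and define $\mu^1$ by counting index-one pseudo-holomorphic strips for a generic compatible almost complex structure $J$. The composition $\mu^2 : CF(L_2,L_3)\otimes CF(L_1,L_2)\to CF(L_1,L_3)$ would be defined by counting rigid $J$-holomorphic triangles with boundary on the three Lagrangians as in Figure \ref{zu1}. The identities $\mu^1\circ\mu^1=0$, the Leibniz rule for $\mu^2$, and homotopy associativity of $\mu^2$ all follow from analyzing the boundary strata of one-dimensional moduli of strips, triangles, and quadrilaterals respectively, provided Gromov compactness, transversality, and bubbling control are in hand. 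Passing to $HF$ then yields a strict associative composition on morphism groups; units are produced either via PSS-type chain maps from $H^*(L_i)$ or as cohomological units of $\mu^2$ when $L_i$ is closed and unobstructed.

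To justify the first-approximation assignment $(M,\mathcal P_M)\mapsto R(M;\mathcal P_M)$, I would invoke Lemma \ref{lem2424}, which gives $i^*\omega=0$, together with the exact sequence \eqref{exact22} and the dimension count \eqref{form23} derived from Assumption \ref{assum122} and Poincar\'e duality. These show that $R(M;\mathcal P_M)$ is half-dimensional and isotropic in $R(\Sigma;\mathcal P_\Sigma)$, so the restriction map $i$ is a Lagrangian immersion wherever it is an immersion. Herald's perturbation \cite{He} of $F_a=0$ globally arranges the immersion condition and gives a Lagrangian cobordism class independent of the perturbation, so the proposed object is well-defined up to that equivalence.

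The hard part is that \emph{first approximation} is a genuine caveat and not merely cosmetic. Three issues obstruct a naive reading of the proposal: (a) $R(\Sigma;\mathcal P_\Sigma)$ has singularities at reducible flat connections (most severely when $G=SU(2)$), so the ambient symplectic manifold must be excised or desingularized before any Floer theory can be set up; (b) the immersed Lagrangian $R(M;\mathcal P_M)$ need not be unobstructed, since pseudo-holomorphic disks through its self-intersection branches can contribute a nonzero $\mu^0$, so producing an honest object of the category requires constructing a bounding cochain out of the three-manifold geometry; and (c) pseudo-holomorphic triangle counts yield only an $A_\infty$-composition rather than a strictly associative one, so the category of the proposal must either be interpreted cohomologically or upgraded to a filtered unital $A_\infty$-category. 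Resolving these obstructions — by replacing strip shrinking and adiabatic-limit analysis with cobordism and homological-algebra arguments — is exactly the program carried out in the later sections of the paper.
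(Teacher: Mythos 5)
Your account is consistent with the paper's treatment: Proposal \ref{Dprop} is a program/definition rather than a theorem, and the paper does not ``prove'' it but immediately records Difficulty \ref{diff27}, whose three items (singularity of $R(\Sigma;\mathcal P_{\Sigma})$ at reducibles, failure of Lagrangian Floer homology to be defined in general, and the fact that the bare immersed Lagrangian $R(M;\mathcal P_M)$ does not determine the relative invariant) correspond to the obstructions (a), (b), (c) you isolate. The one small point worth sharpening is that your item (c) (strict versus $A_\infty$ composition) is not what the paper lists as the third difficulty; the paper's Difficulty \ref{diff27}~(3) is the \emph{known} failure of $HF(M_1\cup_\Sigma M_2)$ to be a function of the two Lagrangians alone, which is precisely why the bounding cochain $b_M$ of Theorem \ref{them61} is needed as extra data on the object --- your point (b) already anticipates this resolution, so the substance agrees even if the bookkeeping of the three difficulties is shuffled.
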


This proposal is made in 1992 at University of Warwick.
There are various problems to realize this proposal which was known 
already at that stage to experts.
\begin{figure}[h]
\centering
\includegraphics[scale=0.3]{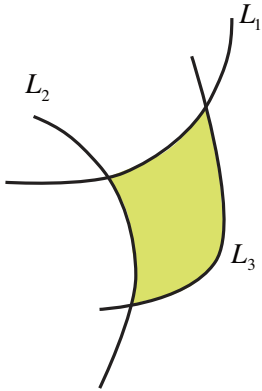}
\caption{Holomorphic triangle}
\label{zu1}
\end{figure}
\begin{difficulty}\label{diff27}
\begin{enumerate}
\item
The space $R(\Sigma;\mathcal P_{\Sigma})$ is 
in general singular. Symplectic Floer theory on 
a singular symplectic manifold is difficult to study.
\item
Even in the case of smooth compact symplectic manifold,
the Floer homology of two Lagrangian submanifolds
is not defined in general and there are various 
conceptional and technical difficulties in 
doing so.
\item
It is known that (Instanton) Floer homology 
of $M = M_1 \cup_{\Sigma} M_2$ is {\it not}
determined by the pair of Lagrangian submanifolds
$R(M_1;\mathcal P_{M_1})$ and 
$R(M_2;\mathcal P_{M_2})$.
So we need some additional information  than 
the immersed Lagrangian submanifold $R(M;\mathcal P_{M})$ to obtain actual relative invariant.
(This is the reason why Donaldson mentioned 
$R(M;\mathcal P_M)$  as a {\it first approximation} and is not 
a relative invariant itself.)
\end{enumerate}
\end{difficulty}
I will explain how much in 25 years, after the  proposal made in 1992, our understanding 
on these problems have been improved.

\section{Relation to Atiyah-Floer conjecture}
\label{section:Atiyah-Floer conjecture}

In this section we explain the relation of the discussion in the previous section 
to a famous conjecture called Atiyah-Floer conjecture \cite{At}.
In its original form Atiyah-Floer conjecture can be stated as follows.
Let $M$ be a closed oriented 3 manifold such that $H_1(M;\Z) = 0$.
We represent $M$ as 
$$
M = H^1_g \cup_{\Sigma_g} H^2_g.
$$
where $H^1_g$ and $H^2_g$ are handle bodies with genus $g$ and 
$\Sigma_g = \partial H^1_g= \partial H^2_g$.
We consider the trivial $SU(2)$ bundle $\mathcal P_M$ on $M$
and on $H^i_g$, $\Sigma_g$. 
Let $R(\Sigma_g;\mathcal P_{\Sigma_g})$, $R(H^i_g;\mathcal P_{H^i_g})$, $R(M;\mathcal P_{M})$ 
be the spaces of gauge equivalence classes of 
flat connections 
of the trivial $SU(2)$ bundle on $H^i_g$, $\Sigma_g$, $M$, respectively.
(\ref{form23}) holds in this case without perturbation.
Namely
\begin{equation}
\dim R(\Sigma_g;\mathcal P_{\Sigma_g}) = 3(g-1) = \frac{1}{2}  \dim  R(H^i_g;\mathcal P_{H^i_g})
\end{equation}
and $R(H^i_g;\mathcal P_{H^i_g})$ $i=1,2$ are Lagrangian `submanifolds' of $R(\Sigma_g;\mathcal P_{\Sigma_g})$.
\begin{conj}{\rm (Atiyah-Floer)}\label{AFconf}
The Instanton Floer homology of $M$ is isomorphic to the Lagrangian Floer homology 
between $R(H^1_g;\mathcal P_{H^1_g})$ and $R(H^2_g;\mathcal P_{H^2_g})$. Namely
\begin{equation}\label{form32}
HF(M;\mathcal P_M) \cong HF(R(H^1_g;\mathcal P_{H^1_g}),R(H^2_g;\mathcal P_{H^2_g})).
\end{equation}
\end{conj}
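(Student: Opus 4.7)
The plan is to construct an isomorphism between the two Floer complexes by a neck-stretching / adiabatic-limit argument. I would first identify the generators. After a generic perturbation rendering all relevant moduli spaces transverse, van Kampen's theorem applied to $\pi_1(M) = \pi_1(H^1_g) *_{\pi_1(\Sigma_g)} \pi_1(H^2_g)$ gives a canonical bijection
$$
R_0(M;\mathcal P_M) \;\longleftrightarrow\; \bigl(R(H^1_g;\mathcal P_{H^1_g}) \cap R(H^2_g;\mathcal P_{H^2_g})\bigr) \setminus \{[0]\},
$$
hence an identification of the underlying chain groups $CF(M;\mathcal P_M) \cong CF(R(H^1_g;\mathcal P_{H^1_g}),R(H^2_g;\mathcal P_{H^2_g}))$.

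To match the differentials I would stretch the neck. Write
$$
M(T) = H^1_g \cup (\Sigma_g \times [-T,T]) \cup H^2_g
$$
with a product metric on the neck, and consider the ASD moduli space $\widetilde{\mathcal M}(M(T)\times \R;a,b)$ of Theorem \ref{thm1-5} as $T \to \infty$. Following the strategy introduced by Dostoglou--Salamon in the mapping-torus case, I expect that in this adiabatic limit the curvature of an ASD connection becomes concentrated transverse to $\Sigma_g$, so that the connection decomposes into flat limits on $H^i_g \times \R$ together with a slowly varying family $u : [-T,T] \times \R \to R(\Sigma_g;\mathcal P_{\Sigma_g})$ which, after rescaling the neck coordinate against the $\tau$ coordinate, satisfies the Cauchy--Riemann equation with boundary conditions $u(-T,\cdot) \in R(H^1_g;\mathcal P_{H^1_g})$ and $u(T,\cdot) \in R(H^2_g;\mathcal P_{H^2_g})$ — that is, a Lagrangian--Floer strip in $R(\Sigma_g;\mathcal P_{\Sigma_g})$ for the pair of Lagrangians appearing on the right of \eqref{form32}.

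The key analytic ingredients are then: (i) a uniform energy bound together with Uhlenbeck compactness away from the singular strata of $R(\Sigma_g;\mathcal P_{\Sigma_g})$; (ii) exponential decay estimates on the long neck giving $C^\infty_{\rm loc}$ convergence of the rescaled connections to a holomorphic strip; (iii) a Fredholm comparison identifying, in the limit, the linearized ASD operator on $M(T) \times \R$ with the linearized $\overline\partial$-operator for strips with Lagrangian boundary in $R(H^1_g), R(H^2_g)$, so that indices and orientations match; and (iv) a gluing theorem producing, for each transverse holomorphic strip and every sufficiently large $T$, a unique ASD instanton on $M(T) \times \R$. Assembling (i)--(iv) gives a sign-preserving bijection of zero-dimensional moduli spaces, hence $\partial_{\rm Inst} = \partial_{\rm Lag}$ under the identification of generators; Floer's invariance of $HF(M;\mathcal P_M)$ under change of metric then yields \eqref{form32}.

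The hardest step is (i)--(ii) near reducibles: $R(\Sigma_g;\mathcal P_{\Sigma_g})$ is genuinely singular at reducible representations, and bubbling of instantons on $\Sigma_g \times \R^2$ with energy concentrating on or near the reducible stratum has no a priori counterpart on the symplectic side. This is precisely Difficulty \ref{diff27} (1) and (3) specialized to the handlebody setting, and it is why the right-hand side of \eqref{form32} must be interpreted carefully in order to be defined at all. As advocated in Proposal \ref{Dprop}, a cleaner route is to reformulate \eqref{form32} inside an $A_\infty$-category whose objects refine the immersed Lagrangians $R(H^i_g;\mathcal P_{H^i_g})$ with additional data recording the contribution of the reducible locus, and to prove the isomorphism by a cobordism-type argument rather than a direct adiabatic limit — the philosophy of the present paper that replaces delicate analytic degeneration by homological algebra.
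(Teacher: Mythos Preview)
The statement you are attempting to prove is a \emph{conjecture}, and the paper does not provide a proof of it. In fact, in the remark immediately following Conjecture~\ref{AFconf} the paper says explicitly that ``the right hand side of the isomorphism (\ref{form32}) has \emph{never} been defined. In that sense Conjecture~\ref{AFconf} is not even a rigorous mathematical conjecture yet.'' So there is no ``paper's own proof'' to compare against, and your proposal cannot be a proof of a statement whose right-hand side is not a well-defined object.

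More specifically, your outline is the classical adiabatic-limit strategy modeled on Dostoglou--Salamon, and the paper addresses exactly this: the same remark records that ``various attempts to relate these two moduli spaces directly have never been successful for more than twenty years.'' The obstruction is not a technicality buried in your steps (i)--(ii); it is that $R(\Sigma_g;\mathcal P_{\Sigma_g})$ is singular at reducibles, the Lagrangians $R(H^i_g;\mathcal P_{H^i_g})$ meet precisely at the trivial connection which lies in the singular locus, and hence the pseudoholomorphic strip equation you write down does not even have a target symplectic manifold in which to make sense of the boundary conditions. Your last paragraph correctly identifies this as the crux and gestures toward the paper's philosophy (cobordism and homological algebra in place of adiabatic degeneration), but that paragraph effectively retracts the ``proof'' that precedes it. The honest summary is: the adiabatic-limit program you sketch is the approach that has been stuck for two decades on exactly the singularity issue you name; the paper's contribution is not to push that program through for Conjecture~\ref{AFconf} but to develop an alternative framework (Theorems~\ref{them61}, \ref{thm64}, \ref{embedcase}) for the $SO(3)$ case with nontrivial $w_2$, where $R(\Sigma;\mathcal P_\Sigma)$ is smooth, while the original $SU(2)$ Atiyah--Floer conjecture remains open.
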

\begin{rem}
In this remark we mention various problems around Conjecture \ref{AFconf}.
\begin{enumerate}
\item
As we explain in later section (Subsection \ref{subsec:Laggeneral}), 
Lagrangian Floer homology
$HF(L_1,L_2)$  is defined 
as a cohomology of the chain complex whose basis is 
identified with the intersection points 
$L_1 \cap L_2$. (This is the case when $L_1$ is transversal to $L_2$.)
It is easy to see that
$$
R(M;\mathcal P_M) \cong R(H^1_g;\mathcal P_{H^1_g}) \times_{R(\Sigma_g;\mathcal P_{\Sigma_g})} R(H^2_g;\mathcal P_{H^2_g}).
$$
Note instanton Floer homology $HF(M;\mathcal P_M)$ is the homology group 
of a chain complex whose basis is identified with 
$R_0(M;\mathcal P_M)$.
So {\it roughly speaking} two Floer homology groups 
$HF(M;\mathcal P_M)$ and $HF(R(H^1_g;\mathcal P_{H^1_g}),R(H^2_g;\mathcal P_{H^2_g}))$
are homology groups of the chain complexes whose 
underlying groups are isomorphic.
So the important point of the proof of Conjecture \ref{AFconf}
is comparing boundary operators.
\par
The boundary operator defining $HF(M;\mathcal P_M)$ 
is obtained by counting the order of the moduli space ${\mathcal M}(M\times \R;a,b)$, 
as we explained in subsection \ref{subsec:Floer homology}.
The boundary operator defining $HF(R(H^1_g;\mathcal P_{H^1_g}),R(H^2_g;\mathcal P_{H^2_g}))$ is 
obtained by counting the order of the moduli space of 
pseudo-holomorphic strips in $R(\Sigma_g;\mathcal P_{\Sigma_g})$ with boundary condition 
defined by $R(H^1_g;\mathcal P_{H^1_g})$ and $R(H^2_g;\mathcal P_{H^2_g})$.
Various attempts to relate these two moduli spaces directly
have never been successful for more than twenty years.
\item
Another problem, which is actually more serious, is 
related to Difficulty \ref{diff27} (1).
In fact the space $R(\Sigma_g;\mathcal P_{\Sigma_g})$ is singular.
The singularity corresponds to the reducible connections.
(Here a connection $a$ is called reducible if the set of gauge transformations 
$g$ such that $g^*a =a$ has positive dimension.)
Moreover the intersection $R(H^1_g;\mathcal P_{H^1_g}) \cap R(H^2_g;\mathcal P_{H^2_g})$ contains a reducible 
connection, which is nothing but the trivial connection.
Note we assumed Assumption \ref{assumption11} (2).
In this situation the only reducible connection in $R(M;\mathcal P_M)$ is 
the trivial connection. 
The singularity of  $R(\Sigma_g;\mathcal P_{\Sigma_g})$ makes the study of pseudoholomorphic strip in $R(\Sigma_g;\mathcal P_{\Sigma_g})$ with boundary condition 
defined by $R(H^1_g;\mathcal P_{H^1_g})$ and $R(H^2_g;\mathcal P_{H^2_g})$
very hard.
\par
In other words, the right hand side of the isomorphism 
(\ref{form32}) has {\it never} been defined.
In that sense Conjecture \ref{AFconf} is not even a rigorous mathematical 
conjecture yet.
\par
We like to mention that there is an interesting work 
\cite{MW} by Manolescu and Woodward 
on this point.
They used extended moduli space studied previously by 
\cite{Hu,HL,Je}.
A proposal to resolve the problem of singularity 
of $R(\Sigma_g;\mathcal P_{\Sigma_g})$ 
using the idea of \cite{MW} is written in 
\cite{DF}.
\end{enumerate}
\end{rem}
\par
There are various variants of  Conjecture \ref{AFconf} 
which are solved and/or which can be stated rigorously and/or which are more 
accesible.
\par
Among those variants the most important result is one by Dostglou-Salamon \cite{DS}.
It studies Problem \ref{prob21} 
in the following case.
$M_1 = M_2 = \Sigma \times [0,1]$ where $\Sigma$ is a Riemann surface.
$\mathcal P_{\Sigma}$ is an $SO(3)$ bundle with $w_2(\mathcal P_{\Sigma}) = [\Sigma]$.
Note 
$\partial M_1 = \partial M_2 \cong \Sigma \sqcup -\Sigma$.
When we glue $M_1$ and $M_2$ along their boundaries 
we obtain a closed 3 manifold $M$ of the form
$$
\Sigma \to M \to S^1.
$$
Namely $M$ is a fiber bundle over $S^1$ with fiber $\Sigma$.
The diffeomorphism class of $M$ is determined by 
$\varphi : \Sigma \to \Sigma$. Namely 
$$
M = M_{\varphi} = (\Sigma \times [0,1])/\sim
$$
and the equivalence relation $\sim$ is defined by $(1,x) \sim (0,\varphi(x))$.
\par
The diffeomorphism $\varphi$ induces a diffeomorphism
$$
\varphi^* : R(\Sigma;\mathcal P_{\Sigma}) \to R(\Sigma;\mathcal P_{\Sigma})
$$
which is a symplectic diffeomorphism.
Its graph
$$
{\rm graph}(\varphi^*) 
= \{(x,\varphi^*x) \in R(\Sigma;\mathcal P_{\Sigma})
\times R(\Sigma;\mathcal P_{\Sigma})
\mid x \in R(\Sigma;\mathcal P_{\Sigma})
\}
$$
is a Lagrangian submanifold of $R(\Sigma;\mathcal P_{\Sigma}) \times 
R(\Sigma;\mathcal P_{\Sigma})
$ 
equipped with symplectic form $\omega \oplus -\omega$.
\begin{thm}{\rm(Dostglou-Salamon \cite{DS})}\label{dostsala}
The Instanton Floer homology $HF(M_{\varphi};\mathcal P_{M_{\varphi}})$
is isomorphic to the Lagrangian Floer homology
$HF(\Delta,{\rm graph}(\varphi^*))$,
where $\Delta \subset R(\Sigma;\mathcal P_{\Sigma}) \times 
R(\Sigma;\mathcal P_{\Sigma})$ is the diagonal.
\end{thm}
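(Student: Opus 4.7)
The plan is to match the two Floer complexes at the chain level: first identify the generators, then identify the boundary operators via an adiabatic limit. The bundle hypothesis $w_2(\mathcal P_\Sigma) = [\Sigma]$ is crucial here because it forces every flat connection on $\Sigma$ to be irreducible, so that $R(\Sigma;\mathcal P_\Sigma)$ is a smooth compact symplectic manifold and both $\mathrm{graph}(\varphi^*)$ and $\Delta$ are genuine smooth Lagrangian submanifolds; this sidesteps Difficulty \ref{diff27} (1) that plagues the general Atiyah--Floer picture.

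The generator matching is essentially algebraic. A flat connection on $M_\varphi = (\Sigma \times [0,1])/{\sim}$ pulls back to a flat connection on $\Sigma \times [0,1]$, and after passing to temporal gauge along the $[0,1]$-factor the flatness equation forces it to be a constant path $a(t) \equiv a$ with $F_a = 0$. The gluing relation $(1,x) \sim (0,\varphi(x))$ then becomes $\varphi^* a \sim a$ modulo gauge, so
$$
R_0(M_\varphi;\mathcal P_{M_\varphi}) \;\cong\; \mathrm{Fix}(\varphi^*)
\;=\; \Delta \cap \mathrm{graph}(\varphi^*).
$$
After a simultaneous small perturbation making both sides nondegenerate, this yields a canonical bijection between the generators of $CF(M_\varphi;\mathcal P_{M_\varphi})$ and of the Lagrangian Floer complex, and one checks along the way that the relative $\Z_8$-grading is preserved.

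To compare the differentials I would carry out an adiabatic limit in the spirit of \cite{DS}. Equip $M_\varphi$ with the family of metrics $g_\epsilon = \epsilon^2 g_\Sigma \oplus dt^2$ and let $\widetilde{\mathcal M}^\epsilon(a,b)$ denote the moduli space of ASD connections on $M_\varphi \times \R$ with respect to $g_\epsilon$ connecting the flat connections representing $a,b$. Writing a connection in a gauge adapted to the fibration $M_\varphi \times \R \to S^1 \times \R$ as $A = \alpha(s,t) + \beta(s,t)\, dt + \gamma(s,t)\, d\tau$, the ASD equation decomposes so that its leading-order part as $\epsilon \to 0$ says (i) $\alpha(s,t)$ is a flat connection on $\Sigma$ for each base point $(s,t)$, and (ii) the resulting map $(s,t) \mapsto [\alpha(s,t)] \in R(\Sigma;\mathcal P_\Sigma)$ is pseudoholomorphic on the base. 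Unfolding the monodromy $\varphi$ converts this into a $J$-holomorphic strip $\R \times [0,1] \to R(\Sigma;\mathcal P_\Sigma) \times R(\Sigma;\mathcal P_\Sigma)$ with boundary on $\Delta \sqcup \mathrm{graph}(\varphi^*)$ and asymptotics $a,b$, giving a candidate adiabatic map from $\widetilde{\mathcal M}^\epsilon(a,b)$ to the Lagrangian Floer moduli space.

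The main obstacle is promoting this formal limit into an actual bijection on zero-dimensional strata for all sufficiently small $\epsilon$. This requires three analytic ingredients: (a) a compactness theorem showing that an energy-bounded sequence with $\epsilon_n \to 0$ subconverges, after gauge transformation, to a (possibly broken) holomorphic strip with exponential decay of the non-flat Fourier modes; (b) a quantitative gluing theorem producing, for every transverse holomorphic strip and every sufficiently small $\epsilon$, a unique nearby ASD connection; and (c) a priori elimination of sphere bubbles in $R(\Sigma;\mathcal P_\Sigma)$ and of instanton bubbling on fibers, which here follows from the dimension formula and the absence of reducibles enforced by the choice of $w_2(\mathcal P_\Sigma)$. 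The technical heart is (a) and (b): one must invert the linearized ASD operator in weighted Sobolev spaces uniformly in $\epsilon$, controlling its low eigenvalues by carefully separating the vertical ($\Sigma$-) modes from the horizontal modes and using the unobstructedness of flat connections on $\Sigma$ guaranteed by the bundle hypothesis. Once (a)--(c) are in place, the adiabatic map is a bijection on rigid moduli, the two differentials agree modulo $2$, and the isomorphism $HF(M_\varphi;\mathcal P_{M_\varphi}) \cong HF(\Delta,\mathrm{graph}(\varphi^*))$ follows.
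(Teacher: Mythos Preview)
The paper does not supply its own proof of this theorem; it is stated as a result of Dostoglou--Salamon with a citation to \cite{DS} and then used as an established input. Your outline is an accurate sketch of the adiabatic-limit strategy actually carried out in \cite{DS}: identifying generators via $R_0(M_\varphi;\mathcal P_{M_\varphi})\cong\Delta\cap\mathrm{graph}(\varphi^*)$, then shrinking the fiber metric so that the ASD equation degenerates to the Cauchy--Riemann equation in $R(\Sigma;\mathcal P_\Sigma)$, with the hard analysis concentrated in the uniform Fredholm estimates and the compactness/gluing package you list as (a)--(c). There is nothing to compare against in this paper beyond noting that your sketch matches the cited source.

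One small caveat worth flagging: the paper, in Remark~4.8 and the surrounding discussion, emphasizes that the adiabatic-limit approach (your route) is analytically delicate and that in the author's later joint work the analogous isomorphisms are proved instead by a cobordism/chain-map argument (Subsection~\ref{gluing}, Theorem~\ref{embedcase}). So while your proposal is the classical and correct proof of the stated theorem, the paper's own philosophy is precisely to \emph{avoid} this kind of degeneration analysis in favor of constructing an explicit chain map from a moduli space over a $Y$-shaped domain and showing its leading term is the identity.
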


There is another case which is actually simpler.
We consider $\Sigma = T^2$ (2 torus) with nontrivial $SO(3)$ bundle $\mathcal P_{\Sigma}$.
Then it is easy to see that the space of flat connections 
$R(T^2;\mathcal P_{T^2})$ is one point.
The following is known in this case.

\begin{thm}{\rm (Braam-Donaldson \cite{BD1})}
\label{T2sumthm}
\begin{enumerate}
\item
Let $M$ be an oriented 3 manifolds with boundary such that
each of the connected component of $\partial M$ is $T^2$.
Let $\mathcal P_{M}$ be a principal $SO(3)$ bundle such that 
$w^2(\mathcal P_{M})\vert_{\partial M} = [\partial M]$.
\par
Then we can define a Floer homology $HF(M;\mathcal P_{M})$
which is a $\Z_2$ vector space.
\item
Suppose $M_1$ and $M_2$ are both as in (1).
We assume $\partial M_1 \cong \partial M_2$.
We glue them to obtain $(M,\mathcal P_M)$.
Then
\begin{equation}\label{form333}
HF(M,\mathcal P_M) 
\cong HF(M_1,\mathcal P_{M_1}) \otimes HF(M_2,\mathcal P_{M_2}).
\end{equation}
\end{enumerate}
\end{thm}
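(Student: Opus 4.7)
The plan for part (1) is to mimic the closed-case construction of Section \ref{subsec:Floer homology}. Attach cylindrical ends $\partial M \times [0,\infty)$ to form $\hat M$ and extend $\mathcal P_M$ in the obvious way. Any finite-energy flat connection on $\hat M$ is asymptotic on each end to a flat connection on $T^2$; but the assumption $w^2(\mathcal P_M)\vert_{\partial M} = [\partial M]$ forces $R(T^2;\mathcal P_{T^2})$ to be a single point, so the asymptotic limit is automatically determined and carries no moduli. After a generic perturbation of $F_a = 0$ (as in the remark following Assumption \ref{assum12}), let $CF(M;\mathcal P_M)$ be the $\Z_2$-vector space on the resulting finite set of irreducible flat connections, and define $\partial$ by counting zero-dimensional components of the ASD moduli space on $\hat M \times \R$ with the appropriate limits as $\tau \to \pm \infty$. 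The key point making the analysis go through is that every relevant connection is irreducible on each boundary end (since $w_2 \ne 0$ there), which yields Fredholmness of the linearized operator, exponential decay, and the usual compactness modulo bubbling.

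For part (2), I first identify the generators. A flat connection on $M = M_1 \cup_{\partial} M_2$ is precisely a pair of flat connections on the $M_i$ whose boundary restrictions coincide; but coincidence is automatic because $R(T^2;\mathcal P_{T^2})$ is a point. Hence
$$
R(M;\mathcal P_M) \;\cong\; R(M_1;\mathcal P_{M_1}) \times R(M_2;\mathcal P_{M_2}),
$$
and at the chain level $CF(M;\mathcal P_M) \cong CF(M_1;\mathcal P_{M_1}) \otimes CF(M_2;\mathcal P_{M_2})$. The theorem then reduces to showing that under this identification,
$$
\partial_M \;=\; \partial_{M_1} \otimes \mathrm{id} \;+\; \mathrm{id} \otimes \partial_{M_2},
$$
because the Künneth theorem over $\Z_2$ then gives (\ref{form333}).

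To match the boundary operators I use a neck-stretching family. Take metrics $g(T)$ on $M$ which are isometric to $\partial M \times [-T,T]$ on a collar of the gluing region, with $T \to \infty$, and consider the moduli space $\mathcal M(M \times \R;a,b;g(T))$ for pairs $a,b$ in $R(M;\mathcal P_M)$ corresponding to pairs $(a_1,a_2), (b_1,b_2)$. By the standard cobordism argument for Floer homology, the resulting Floer homology is independent of $T$. As $T \to \infty$ one expects convergence to the fibered product
$$
\bigcup_{c_1,c_2} \mathcal M(\hat M_1 \times \R;a_1,b_1) \times \mathcal M(\hat M_2 \times \R;a_2,b_2)
$$
glued along the limiting connection on the infinite neck $\partial M \times \R \times \R$. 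Since $R(\partial M;\mathcal P_{\partial M})$ is a single (irreducible) point, the neck carries no moduli of limiting flat connections, no drift, and no pseudo-holomorphic strips in the sense of Atiyah-Floer; the fibered product degenerates to an honest direct product. A parallel gluing construction shows every such product pair is realized, uniquely for large $T$, by an ASD connection on $M \times \R$ with metric $g(T)$. Counting in dimension zero yields exactly the tensor product formula for $\partial$.

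The main obstacle is the compactness/gluing analysis for this neck stretch: ruling out energy concentration in the neck, excluding bubbling there, and constructing a diffeomorphism between moduli spaces at large $T$ and the product of the two end moduli spaces. The case at hand is much more tractable than the full Atiyah-Floer setup of Difficulty \ref{diff27} for two reasons highlighted above: the triviality of $R(T^2;\mathcal P_{T^2})$ eliminates the singular character variety and removes the need for any pseudo-holomorphic analysis in the symplectic quotient, while the condition $w^2 \ne 0$ on each boundary component forces every flat connection appearing to be irreducible, giving the uniform exponential decay and Fredholm theory required for the standard instanton gluing of \cite{Don2, fl1, DS}. With these ingredients, the stretching/gluing argument is formally parallel to the closed-manifold connected-sum arguments and yields (\ref{form333}).
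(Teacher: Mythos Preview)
Your proposal is correct and aligns with the paper's own treatment. The paper does not give a full proof but only a sketch: it notes that $R(M;\mathcal P_M) = R(M_1;\mathcal P_{M_1}) \times R(M_2;\mathcal P_{M_2})$ (since $R(T^2;\mathcal P_{T^2})$ is a point), so the chain complexes agree as vector spaces, and then refers to \cite{BD1} for the fact that the boundary operators coincide; your neck-stretching argument is exactly the standard way to establish that last step, and your outline of part (1) via cylindrical ends with forced asymptotics is likewise the expected construction.
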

Note in the situation of Theorem \ref{T2sumthm} (1) the set of flat connections 
$R(M;\mathcal P_M)$ is a finite set if Assumption \ref{assum122} is satisfied. 
In that case $HF(M;\mathcal P_{M})$ is the cohomology of a chain complex 
$CF(M;\mathcal P_{M})$ whose underlying vector space has a basis identified with $R(M;\mathcal P_M)$.
\par
Note (\ref{form333}) is the case of $\Z_2$ coefficient. 
In the case of $\Z$ coefficient
there is a  K\"unneth type split exact sequence.
\par
In the situation of Theorem \ref{T2sumthm} (2) we assume both $R(M_i;\mathcal P_{M_i})$ ($i=1,2$) satisfy 
Assumption \ref{assum122}. The we can identify
$$
R(M;\mathcal P_M) = R(M_1;\mathcal P_{M_1}) \times R(M_2;\mathcal P_{M_2})
$$
and hence
$$
CF(M;\mathcal P_M) = CF(M_1;\mathcal P_{M_1}) \otimes CF(M_2;\mathcal P_{M_2})
$$
as vector spaces. It is proved in \cite{BD1} that the boundary operators coincide each other.
\par
There are two similar cases which were studied around the same time.
One is the case $\Sigma = S^2$. In this case the bundle $\mathcal P_{S^2}$ on $S^2$ 
is necessary 
trivial if it carries a flat connection.\footnote{If for a pair of closed 3 manifolds $M$ and $SO(3)$ bundle 
$\mathcal P_M$, there exists $S^2 \subset M$ with $w^2(\mathcal P_M) \cap S^2 \ne 0$, then $HF(M;\mathcal P_M) = 0$.}
In this case (\ref{form32}) and (\ref{form333}) correspond to the study of  the 
Floer homology of connected sum. 

\begin{thm}{\rm(Fukaya, Li \cite{fu15,WL})}\label{connectedsum}
Let $(M_1;\mathcal P_{M_1})$ and $(M_2;\mathcal P_{M_2})$  both
satisfy Assumption \ref{assumption11} (2).
We put $M = M_1 \# M_2$ (the connected sum). 
$\mathcal P_{M_1}$ and $\mathcal P_{M_2}$ induce 
a principal bundle $\mathcal P_M$ on $M$ in an obvious way 
so that Assumption \ref{assumption11} (2) is satisfied.
Then for each field $F$ there exists a spectral sequence 
with the following properties:
\begin{enumerate}
\item
Its $E^2$ page is
$$
\aligned
&HF(M_1;\mathcal P_{M_1};F)
\oplus
HF(M_2;\mathcal P_{M_2};F)
\\
&\oplus
\left(
HF(M_1,\mathcal P_{M_1};F) \otimes 
HF(M_2,\mathcal P_{M_2};F) \otimes
H(SO(3);F)
\right).
\endaligned
$$
\item
It converges to $HF(M;\mathcal P_{M};F)$.
\end{enumerate}
\end{thm}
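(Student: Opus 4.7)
The plan is to establish Theorem \ref{connectedsum} by a neck-stretching argument that realises the spectral sequence as the one associated with a natural filtration of $CF(M;\mathcal P_M)$ by the stratification of flat connections across the connect-sum sphere. Insert a long neck $S^2\times[-T,T]$ between the summands and analyse the flat and ASD moduli spaces on the resulting 3-manifold $M(T)$ as $T\to\infty$. Since $\pi_1(S^2)=1$, every flat $SU(2)$ connection on $M$ is trivial on each neck slice, so gauge classes of flat connections on $M$ fall into four strata: the trivial connection; those non-trivial on $M_1$ only, giving a copy of $R_0(M_1;\mathcal P_{M_1})$; those non-trivial on $M_2$ only, giving a copy of $R_0(M_2;\mathcal P_{M_2})$; and those non-trivial on both summands, which form a Morse--Bott family $R_0(M_1;\mathcal P_{M_1})\times R_0(M_2;\mathcal P_{M_2})\times SO(3)$, where the $SO(3)$ records the relative framing at the cutting $S^2$ modulo the global centre $\{\pm I\}$.

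After a Morse--Bott perturbation of the Chern--Simons functional (as in \cite{fl1,He}), the mixed stratum contributes generators to $CF(M;\mathcal P_M)$ indexed by a Morse decomposition of $SO(3)$ over each pair $([a_1],[a_2])\in R_0(M_1)\times R_0(M_2)$. I would filter $CF(M;\mathcal P_M)$ so that the filtration degree records which of the strata above a generator belongs to, with the mixed stratum placed in the highest degree. The key claim is that the Floer differential preserves this filtration: an ASD trajectory on $M\times\R$ that lowers the filtration degree must, in the limit $T\to\infty$, concentrate positive energy on $S^2\times\R\times\R$, and such neck solutions come from framed $SU(2)$-instantons on $\R^4$ which do not alter the non-triviality pattern of the asymptotic flat limits. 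Granting this, the $E_1$ page of the associated spectral sequence splits as a direct sum of the intra-stratum complexes; the first two are $CF(M_1;\mathcal P_{M_1};F)$ and $CF(M_2;\mathcal P_{M_2};F)$ with their own instanton differentials, while the third is $CF(M_1;\mathcal P_{M_1};F)\otimes CF(M_2;\mathcal P_{M_2};F)\otimes C_*(SO(3);F)$ whose $E_1$-differential, by a K\"unneth-type neck-stretching computation, equals the tensor product of the two Floer differentials with the Morse boundary on $SO(3)$. Taking $E_1$-homology yields the $E_2$ page stated in the theorem, and convergence to $HF(M;\mathcal P_M;F)$ is automatic because the filtration is bounded and the chain complex is finite-dimensional.

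The principal difficulty, anticipated in Difficulty \ref{diff27}(3), is the neck analysis for the ASD moduli on $M\times\R$: one must control bubbling of instantons along $S^2\times\R\subset M\times\R$, classify the possible bubble trees on $\R^4$ and identify their framing data with chains on $SO(3)$, and thereby verify the filtration property of the differential together with the identification of the $E_1$-differential on the mixed stratum. This is precisely the step where the cobordism and homological-algebra strategy advertised in the abstract should replace the older adiabatic analysis: rather than matching individual ASD trajectories across strata, one constructs a chain-level cobordism between the filtered complex on $M$ and the explicit tensor-product model built from $M_1$, $M_2$, and $SO(3)$, and extracts the spectral sequence by a homological perturbation lemma.
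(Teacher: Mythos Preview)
The paper does not actually prove Theorem~\ref{connectedsum}; it states the result with citation to \cite{fu15,WL} and motivates the $E^2$ page through the decomposition~(\ref{form34}) of $R_0(M;\mathcal P_M)$. Your stratification of the flat connections and the origin of the $SO(3)$ factor agree with~(\ref{form34}), so that part is fine.

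The gap is in the filtration step. Your argument that cross-stratum trajectories must, in the stretched limit, bubble on the neck as $\R^4$-instantons and that such bubbles ``do not alter the non-triviality pattern'' is both aimed at the wrong claim and incorrect in content. First, if it really showed that $\partial$ is block-diagonal with respect to the strata, the spectral sequence would degenerate at $E^2$, contradicting the remark immediately after the theorem (the Poincar\'e-sphere example shows it does not). Second, cross-stratum trajectories need not bubble at all: a trajectory on $M\times\R$ from a single-$M_1$ generator to a mixed generator, in the stretched limit, yields on the $M_2$-side an ordinary ASD solution on $M_2\times\R$ running between the trivial connection and an irreducible one. Such solutions exist (they are exactly the trajectories one discards when deleting the trivial connection from $CF(M_2)$), and there is no a priori energy or index reason forcing them to occur in only one filtration direction. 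Thus the naive three-level stratum filtration, with either ordering, is not obviously preserved by $\partial$.

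What is missing is a mechanism that turns these reducible-to-irreducible interactions into controlled higher differentials rather than filtration violations; in \cite{fu15,WL} this requires a substantially more careful construction than the stratum filtration you describe. Your closing appeal to the ``cobordism and homological-algebra strategy'' of the abstract is also a misreading: that strategy is directed at the boundary and Lagrangian-correspondence problems of Sections~\ref{boundary}--\ref{lagcorr}, whereas the connected-sum theorem was established in the 1990s by analytic neck-stretching, carried out with the more delicate bookkeeping just indicated.
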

We can prove a similar statement in the case of  Assumption \ref{assumption11} (1).
(It was not explored 25 years ago.)
\par
Note in the situation of Theorem \ref{connectedsum}
we have the following isomorphism if Assumption \ref{assum122} is satisfied. 
\begin{equation}\label{form34}
\aligned
R_0(M;\mathcal P_{M})
\cong
&\,\,R_0(M_1;\mathcal P_{M_1})
\sqcup
R_0(M_2;\mathcal P_{M_2}) \\
&\sqcup
(R_0(M_1;\mathcal P_{M_1})
\times
R_0(M_2;\mathcal P_{M_2})
\times SO(3)).
\endaligned
\end{equation}
Note $R_0(M;\mathcal P_{M}) = R(M;\mathcal P_{M}) \setminus \{\text{trivial connection}\}$
in our situation. 
The first and the second term of the right hand side of 
(\ref{form34}) correspond to the flat connection on $M$ which is 
trivial either on $M_1$ or on $M_2$.
The third term of (\ref{form34})  corresponds to the flat connection on $M$ which is 
nontrivial both on $M_1$ and $M_2$. In this case there is extra freedom 
to twist the connections on $S^2$ where we glue $M_1$ and $M_2$.
It is parametrized by $SO(3)$.
(\ref{form34}) explains Theorem \ref{connectedsum} (1).
\par
The spectral sequence in general does not degenerate in $E^2$ level.
In fact, there is a nontrivial differential which is 
related to the fundamental homology class of $H(SO(3))$.
One such example is the case when $M_1$ is Poincar\'e homology sphere and
$M_2$ is Poincar\'e homology sphere with reverse orientation.
\par
The next simplest case is one when $\Sigma = T^2$ with the trivial 
$SU(2)$ bundle.
The following Floer's exact sequence is 
closely related to this case.
Suppose $(M;\mathcal P_{M})$ is as in Assumption \ref{assumption11} (1)
and we take $S^1 = K \subset M$, a knot.
We remove a tubular neighborhood $S^1 \times D^2$ of $K$ from 
$M$ and re-glue $S^1 \times D^2$ along the boundaries to obtain $M'$.
This process is called Dehn surgery.
There are several different ways to re-glue, 
which is parametrized by the diffeomorphism $T^2 \to T^2$.
Composing with the diffeomorphism which extends to $S^1 \times D^2$ does not 
change the diffeomorphism type of $M'$ and so $M'$ is parametrized by a pair of integers
$(p,q)$
(which are coprime) or $p/q \in \Q \cup \{\infty\}$.
We consider the case when this rational number is $-1,1,0$ and write them $M_{-1}$,
$M_0$ and $M_1$.  The manifold $M_{-1}$ is actually $M$ itself.
$M_{+1}$ is another 3 manifold which is a homology 3 sphere. (It satisfies 
Assumption \ref{assumption11} (1).)
$M_0$ is homology $S^1 \times S^2$.
We can extend $\mathcal P_{M}\vert_{M \setminus K}$ to it 
and obtain $\mathcal P_{M_0}$ such that the flat connection 
of $\mathcal P_{M_0}$ corresponds to the 
group homomorphism $\pi_1(M \setminus K) \to SU(2)$ 
which sends the meridian to $-1$. Here meridian is a small
circle which has liking number 1 with the knot $K$.
\begin{thm}{\rm(Floer \cite{fl2}, Braam-Donaldson \cite{BD1})}
\label{Ftriangle}
There exists a long exact sequence:
\begin{equation}\label{triangleFL}
\longrightarrow HF(M_{-1};\mathcal P_{M_{-1}})
\longrightarrow HF(M_{0};\mathcal P_{M_{0}})
\longrightarrow HF(M_{+1};\mathcal P_{M_{+1}})
\longrightarrow
\end{equation}
\end{thm}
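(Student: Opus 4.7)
The strategy is to construct the exact triangle from three chain maps associated with the three $2$-handle cobordisms connecting $M_{-1}$, $M_0$, $M_{+1}$, and then to identify the mapping cone of one of these maps with the third complex via a neck-stretching argument on a composite cobordism.

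First, I would construct cobordisms $W_{i,j}$ from $M_i$ to $M_j$ for each pair $i,j \in \{-1,0,+1\}$ with $i \neq j$. Each $W_{i,j}$ is built from $M_i \times [0,1]$ by attaching a single $2$-handle along a curve parallel to $K$ with the appropriate framing (determined by the change of Dehn filling slope), and the bundles $\mathcal{P}_{M_i}$ extend canonically across $W_{i,j}$. Using the construction of the relative Donaldson invariant (Theorem \ref{thm17}) with empty cohomology insertions and asymptotic limits prescribed at the two cylindrical ends, each cobordism gives a chain map
$$
\phi_{i,j} : CF(M_i;\mathcal{P}_{M_i}) \longrightarrow CF(M_j;\mathcal{P}_{M_j}).
$$
The chain-map property follows from the boundary analysis of the parameterized instanton moduli space on $W_{i,j}$ with two cylindrical ends, exactly as in Section \ref{subsec:Relative Donaldson}.

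Next, I would show that each pairwise composition $\phi_{j,k}\circ\phi_{i,j}$ is chain-homotopic to zero, and more strongly that the mapping cone of $\phi_{i,j}$ is quasi-isomorphic to $CF(M_k;\mathcal{P}_{M_k})$ up to degree shift. Both assertions rest on analyzing the ASD moduli space on the composite cobordism $W_{i,j}\cup_{M_j}W_{j,k}$ equipped with a one-parameter family of Riemannian metrics that stretches the neck along the embedded copy of $M_j$. The composite $4$-manifold contains an essential $2$-sphere $S$, given by the union of the cores and co-cores of the two $2$-handles, whose self-intersection is governed by the three surgery coefficients. For the triple of slopes $(-1,0,+1)$ this sphere accounts precisely for the difference between the composite cobordism and the third cobordism $W_{i,k}$; the boundary of the parameterized moduli space, which breaks into trajectories along the neck together with instanton bubbling on $S$, provides on one hand the null-homotopy and on the other hand the cone identification, together yielding the long exact sequence (\ref{triangleFL}).

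The main obstacle is controlling bubbling and the reducible locus throughout the neck-stretching. The trivial flat connection is always present and cannot be removed by perturbation of the Chern-Simons functional; gluing theorems near such a reducible require an obstruction-bundle correction whenever it appears as a limit of instantons on a cobordism, and an analogous issue arises for the reducible flat connections on $M_0$ supported by its $S^{1}\times S^{2}$ topology. Equally delicate is the bookkeeping of wall-crossing in the one-parameter family of metrics: one must prove that the only contributions to the boundary of the parameterized moduli space come from neck-breaking and from bubbling on $S$, and that these two sources assemble into exactly the algebraic expression identifying the cone of $\phi_{i,j}$ with $CF(M_k;\mathcal{P}_{M_k})$. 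This combined analysis, carried out originally by Floer \cite{fl2} and reworked by Braam-Donaldson \cite{BD1}, is the technical heart of the argument and is precisely the step where the paper's philosophy of trading analytic limits for cobordism-plus-homological-algebra bookkeeping pays off.
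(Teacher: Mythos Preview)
The paper does not actually prove Theorem \ref{Ftriangle}; it is stated as a result of Floer \cite{fl2} and Braam--Donaldson \cite{BD1}. What the paper \emph{does} offer, immediately after the statement, is a heuristic reinterpretation in the spirit of Proposal \ref{Dprop}: one removes a tubular neighborhood of $K$ to get $\overset{\circ}M$ with $\partial\overset{\circ}M = T^2$, views the relative invariant $HF_{\overset{\circ}M}$ as an object in a category associated to $T^2/\{\pm 1\}$, and recognizes the three Floer groups in (\ref{triangleFL}) as morphism spaces from this object to three circles $C_{-1},C_0,C_{+1}$. The exact triangle is then ``explained'' by the homological identity $[C_{-1}]+[C_{+1}]=[C_0]$ among these circles (see Figure \ref{zu2}).

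Your proposal, by contrast, outlines the direct gauge-theoretic proof: cobordism-induced chain maps, metric stretching on the composite cobordism, and identification of a mapping cone via the analysis of a $(-1)$-sphere in the glued $4$-manifold. This is indeed the strategy of \cite{fl2,BD1}, and your sketch is accurate in its broad strokes. The two viewpoints are complementary: yours is the actual proof, while the paper's discussion is a conceptual reframing that motivates Problem \ref{prob21} and foreshadows the categorical machinery developed later. Your version makes the analytic difficulties (reducibles, bubbling, wall-crossing) explicit; the paper's version hides them inside the as-yet-unconstructed category $\mathcal C(T^2;\mathcal P_{T^2})$ but makes the \emph{reason} for exactness---a relation among Lagrangian cycles---transparent.

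One small caution: your remark about the trivial flat connection being ``always present and cannot be removed by perturbation'' applies to the $SU(2)$/homology-sphere setting (Assumption \ref{assumption11} (2)). In the paper's stated setup for Theorem \ref{Ftriangle} the bundles on $M_{\pm 1}$ carry trivial $SU(2)$ structure while $M_0$ carries a nontrivial $SO(3)$ bundle, so the reducible issues are slightly asymmetric across the three terms; this does not affect your overall outline but is worth tracking if you flesh out the details.
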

The relation of Theorem \ref{Ftriangle} to the gluing problem such as 
Theorems \ref{dostsala}, \ref{T2sumthm}, \ref{connectedsum} is as follows.
We put
$$
\overset{\circ} M = M \setminus (S^1 \times {\rm Int} \, D^2).
$$
(Here $S^1 \times D^2$ is the tubular neighborhood of the knot $K$.)
The boundary $\partial \overset{\circ} M$ is $T^2$ on which $\mathcal P_M$ is trivial.
Therefore $R(\partial \overset{\circ} M,\mathcal P_{\partial \overset{\circ} M})$
is the set of gauge equivalence classes of flat $SU(2)$ connections on $T^2$, 
which is identified with $T^2/{\pm 1}$.
So, according to Proposal \ref{Dprop}, the relative invariant 
$HF_{\overset{\circ} M}$ `is' a Lagrangian submanifold of $T^2/{\pm 1}$, 
which is a sum of immersed circles in it.
\par
On the other hand, the manifolds $M_{-1}, M_{0}, M_{+1}$ are obtained by glueing $S^1 \times D^2$ in various ways to $\overset{\circ} M$.
The relative invariant $HF_{S^1 \times D^2}$  is the set of flat connections 
on $M$  which can be identified with various circles.
Let $C_{-1}, C_0$ and $C_{+1}$ be the circules in $T^2/{\pm 1}$ corresponding 
to $-1$, $0$ and $+1$ surgeries, respectively.
\par
The Floer homologies appearing in (\ref{triangleFL}) is obtained 
as the set of `morphisms' from the object $HF_{\overset{\circ} M}$ to those circles
$C_{-1}, C_0$ and $C_{+1}$.
The proof then goes by using the identity
$$
[C_{-1}] + [C_{+1}] = [C_{0}]
$$
as cycles.
\begin{figure}[h]
\centering
\includegraphics[scale=0.3]{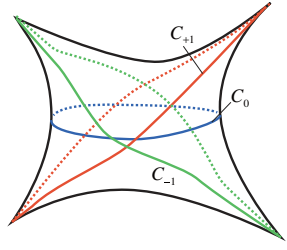}
\caption{$C_{\pm 1}, C_0$}
\label{zu2}
\end{figure}

\section{Biased review of Lagrangian Floer theory I}
\label{section:Lagrangian Floer theory}

In this and the next sections we review Lagrangian Floer theory with emphasis on its application to
gauge theory.
Another review of Lagrangian Floer theory which put more emphasis on its application 
to Mirror symmetry is \cite{fu055}.

\subsection{General idea of Lagrangian Floer homology}
\label{subsec:Laggeneral}

Let $(X,\omega)$ be a $2n$ dimensional compact symplectic manifold (namely $X$ is a 
$2n$ dimensional manifold 
and $\omega$ is  a closed 2 form on it such that $\omega^n$ never vanish.)
Let $L_1, L_2$ be Lagrangian submanifolds of $X$ (namely they are 
$n$ dimensional submanifolds such that $\omega\vert_{L_i} = 0$.)
We first consider the case when $L_1$ and $L_2$ are both embedded.
We assume for simplicity that $L_1$ is transversal to $L_2$.
It implies that the set $L_1 \cap L_1$ is a finite set.
Let 
$CF(L_1,L_2;\Z_2)$ be the $\Z_2$ vector space
whose basis is identified with $L_1 \cap L_2$.
We take and fix an almost complex structure $J$ which is 
compatible with $\omega$. (Namely we assume $\omega(JX,JY) = \omega(X,Y)$ 
and $g(X,Y) = \omega(X,JY)$ is a Riemannian metric.\footnote{Actually the first 
condition is a consequence of the second condition.})
For $a,b \in L_1 \cap L_2$ we consider the set of maps
$$
u : \R \times [0,1] \to X
$$
with the following properties. (Here $\tau$ and $t$ are 
coordinate of $\R$ and $[0,1]$ respectively.)
\begin{enumerate}
\item[(LF.1)]
$u$ is $J$ holomorphic. Namely
\begin{equation}\label{holoeq}
\frac{\partial u}{\partial\tau} = J\frac{\partial u}{\partial t}.
\end{equation}
\item[(LF.2)]
$u(\tau,0) \in L_0$ and $u(\tau,1) \in L_1$.
\item[(LF.3)]
$$
\int u^* \omega < \infty.
$$
Moreover
$$
\lim_{\tau\to -\infty}u(\tau,t) = a,
\qquad
\lim_{\tau\to +\infty}u(\tau,t) = b.
$$
\end{enumerate}
We remark that these conditions are similar to 
(IF.1), (IF.2), (IF.3) we put to define the moduli space ${\mathcal M}(M\times \R;a,b)$
in Subsection \ref{assumption11}.
We denote the set of the maps $u$ satisfying these conditions 
by $\widetilde{\mathcal M}(L_1,L_2;a,b)$.
The translation on $\R$ direction of the source $\R \times [0,1]$ 
induces an $\R$ action on $\widetilde{\mathcal M}(L_1,L_2;a,b)$.
We denote by ${\mathcal M}(L_1,L_2;a,b)$ the quotient space of this action.
\par
We decompose ${\mathcal M}(L_1,L_2;a,b)$ as
\begin{equation}
{\mathcal M}(L_1,L_2;a,b)
=
\bigcup_{k,E} {\mathcal M}(L_1,L_2;a,b;k,E)
\end{equation}
where $k \in \Z$ and $E \in \R_{\ge 0}$. 
Here ${\mathcal M}(L_1,L_2;a,b;k,E)$ consists of the maps $u$ such that:
\begin{enumerate}
\item
$\int u^*\omega = E$.
\item
The index of the linearized equation (\ref{holoeq}) at $u$ is $k$.
\end{enumerate}
Roughly speaking the Floer's boundary operator is defined by
\begin{equation}\label{form43}
\langle \partial a,b \rangle
=
 \sum_{E}\#{\mathcal M}(L_1,L_2;a,b;1,E) [b].
\end{equation}
The proof of $\partial\partial = 0$ would be based on the equality
\begin{equation}\label{neweq122}
\aligned
&\partial {\mathcal M}(L_1,L_2;a,c;2,E) 
\\
&=
\bigcup_{E_1 + E_2 = E}{\mathcal M}(L_1,L_2;a,c;1,E_1) 
\times {\mathcal M}(L_1,L_2;c,b;1,E_2),
\endaligned
\end{equation}
which is similar to (\ref{neweq12}).
Actually (\ref{neweq122}) does not hold in general.
There is so called disk bubble 
which corresponds to another type of the boundary component of 
${\mathcal M}(L_1,L_2;a,c;2,E)$.
Floer \cite{Flo88IV} put Condition \ref{conds41} below to avoid it.
\begin{conds}\label{conds41}
For any $u : (D^2,\partial D^2) \to (X,L_i)$ ($i=1,2$) we have
$$
\int_{D^2} u^* \omega = 0.
$$
\end{conds}
\begin{thm}{\rm (Floer \cite{Flo88IV})}\label{FloerLag}
Under Condition \ref{conds41}, the following holds for generic compatible 
almost complex structure $J$.
\begin{enumerate}
\item The moduli space ${\mathcal M}(L_1,L_2;a,b;1,E)$ 
satisfies an appropriate transversality condition.
\item
We can define the boundary operator by (\ref{form43}).
\item
(\ref{neweq122}) holds and we can use it to prove $\partial\partial = 0$.
So we can define Floer homology
$$
HF(L_1,L_2) \cong 
\frac{{\rm Ker}\,\,\, (\partial : CF(L_1,L_2) \to CF(L_1,L_2))}
{{\rm{Im}}\,\,\, (\partial : CF(L_1,L_2) \to CF(L_1,L_2))}.
$$
\item
If $\varphi : X \to X$ is a Hamiltonian diffeomorphism then
$$
HF(\varphi(L_1),L_2) \cong HF(L_1,L_2).
$$
\item
If $L_1 = L_2 = L$ and $\varphi : X \to X$ is a Hamiltonian diffeomorphism
then
$HF(\varphi(L),L)$ is isomorphic to the ordinary homology $H(L;\Z_2)$ of $L$.
\end{enumerate}
\end{thm}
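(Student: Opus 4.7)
The plan is to treat (1)--(3) as an analytic package (transversality, compactness, gluing), then derive (4) and (5) as formal consequences using continuation maps and a Morse-theoretic comparison respectively. Throughout I would mirror the structure of the instanton argument already sketched in Theorem \ref{thm1-5} and the outline following (\ref{neweq12}).

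For (1), I would take the universal moduli space $\widetilde{\mathcal M}^{\mathrm{univ}}(L_1,L_2;a,b;k,E)$ parametrized by the Banach manifold $\mathcal J^\ell$ of $C^\ell$ compatible almost complex structures, show it is a Banach manifold by checking surjectivity of the linearization of $\bar\partial_J$ coupled with variations of $J$, and then apply Sard--Smale to the projection to $\mathcal J^\ell$; a standard Taubes trick recovers genericity for smooth $J$. The somewhere-injectivity of non-constant strips with two distinct asymptotics makes the surjectivity step work. For (2), part (1) makes $\mathcal M(L_1,L_2;a,b;1,E)$ a $0$-dimensional manifold (after quotienting the index-$1$ space by the $\R$-translation); Gromov compactness together with a uniform energy bound shows it is finite, and (\ref{form43}) then has only finitely many nonzero terms if we truncate by $E$ and use Condition \ref{conds41} together with a monotonicity argument to ensure that summing over $E$ is finite in the relevant sense.

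For (3), which is the main analytic obstacle, the key is a precise description of the Gromov--Floer compactification of the $1$-dimensional space $\mathcal M(L_1,L_2;a,c;2,E)$. The possible boundary phenomena are: (i) strip-breaking at some intermediate $b\in L_1\cap L_2$, contributing the right-hand side of (\ref{neweq122}); (ii) sphere bubbles, which are codimension $\ge 2$ for generic $J$ since the space of simple spheres is cut out transversally; and (iii) disk bubbles attached to $L_1$ or $L_2$, which are ruled out by Condition \ref{conds41}, because any such bubble carries positive symplectic area while that condition forces its area to vanish. With (i) as the only genuine codimension-$1$ phenomenon, a gluing theorem in the style of Floer--Hofer--Salamon identifies a neighborhood of each broken strip $(u_1,u_2)$ with a half-open interval in $\mathcal M(L_1,L_2;a,c;2,E)$, yielding exactly the identification (\ref{neweq122}). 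Counting the ends of a compact $1$-manifold $\mod 2$ then gives $\partial\partial=0$.

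For (4) I would use a continuation map induced by a path $\varphi_s$ of Hamiltonian diffeomorphisms interpolating between ${\rm id}$ and $\varphi$, realized concretely as a family of moving boundary conditions (or as a $\tau$-dependent almost complex structure) on $\R\times[0,1]$; the count of index-$0$ solutions defines a chain map $\Phi:CF(\varphi(L_1),L_2)\to CF(L_1,L_2)$, and comparing the reverse continuation composed with $\Phi$ to the identity via a homotopy of homotopies yields a chain homotopy. Finally for (5), for $\varphi$ generated by a $C^2$-small Morse function $f:L\to\R$, the intersection points $\varphi(L)\cap L$ are exactly $\mathrm{Crit}(f)$, and a standard adiabatic argument (Floer's original proof for the cotangent-like neighborhood of $L$) shows that for sufficiently small $f$ the index-$1$ pseudoholomorphic strips are in bijection with gradient flow lines of $f$, so $CF(\varphi(L),L)$ is chain-isomorphic to the Morse complex of $f$, computing $H_*(L;\Z_2)$. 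The hard step here is the $C^0$-estimate pinning solutions into a Weinstein neighborhood of $L$ and the uniqueness of the correspondence; combined with (4), this gives the stated isomorphism.
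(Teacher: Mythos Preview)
The paper does not give its own proof of this theorem; it is stated as a result of Floer with a citation to \cite{Flo88IV}, in keeping with the survey character of Section~\ref{section:Lagrangian Floer theory}. Your outline is the standard proof and is essentially correct, so in that sense it agrees with what the paper defers to.

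Two small comments on accuracy. First, in (2) you invoke a ``monotonicity argument'' to control the sum over $E$; under Condition~\ref{conds41} this is simpler than you make it: since $\omega$ vanishes on $\pi_2(X,L_i)$, any two strips from $a$ to $b$ differ by disks and spheres of zero area, so the energy $E$ is uniquely determined by $(a,b)$ and the sum in (\ref{form43}) has a single term. Second, in (3) you dispose of sphere bubbles by a generic codimension-$2$ argument; Floer's original hypotheses were stronger (e.g.\ $\pi_2(X)=0$), but your version is fine for the statement as written here. Otherwise the transversality via Sard--Smale, the strip-breaking/gluing description of the boundary, the continuation-map proof of (4), and the $C^2$-small Morse function identification for (5) are exactly the standard route.
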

\begin{rem}
We do not explain the notion of Hamiltonian diffeomorphism
here since our description of Lagrangian Floer homology 
is biased to the direction which is related to Gauge theory.
\par
If $L_i$ is spin we can define Floer homology over $\Z$ coefficient.
(We can relax this condition to relative spinness.) See \cite[Chapter 8]{fooobook2}. 
\end{rem}

\subsection{Monotone Lagrangian submanifold}
\label{subsec:monotone}
Condition \ref{conds41} is too much restrictive.
Especially  we can not work under this condition in our application
to gauge theory (for example to realize Proposal \ref{Dprop}).
The next step to relax this condition is due to Y.-G.Oh.
\begin{thm}{\rm(Oh \cite{Oh})}\label{monotonecaseoh}
Instead of Condition \ref{conds41} we assume that 
$L_i$ are monotone and has minimal Maslov number $>2$, for $i=1,2$.
Then Theorem \ref{FloerLag} (1)(2)(3)(4) holds.
\end{thm}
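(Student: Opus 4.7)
The plan is to follow the scheme Floer used for Theorem \ref{FloerLag}, replacing Condition \ref{conds41} by an energy-and-dimension argument powered by monotonicity. I would keep the moduli $\mathcal M(L_1,L_2;a,b;k,E)$ and the tentative definition (\ref{form43}) unchanged; the monotonicity hypothesis gives $\int\beta^*\omega=\lambda\,\mu(\beta)$ for any disc $\beta$ with boundary on $L_i$, and $\int s^*\omega=2\lambda\,c_1(TX)[s]$ for any sphere $s$ (since monotonicity of an embedded Lagrangian propagates to monotonicity of the ambient $X$). This produces an energy quantization: for fixed $E$ only finitely many topological bubble classes are possible, and each bubble carries energy at least $\lambda\mu_{\min}$ or $2\lambda$ respectively. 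Gromov compactness then supplies a compactification of $\mathcal M(L_1,L_2;a,b;k,E)$ whose extra strata are trees of disc bubbles attached at boundary points on $L_i$ and/or sphere bubbles attached at interior points, on top of the usual strip-breakings.

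The key step is to show that for a generic compatible $J$, no disc or sphere bubble stratum appears in the codimension-one boundary of the moduli used for $\partial$, for $\partial\partial=0$, and for the chain map/homotopy needed in (4). In a configuration inside $\mathcal M(L_1,L_2;a,b;k,E)$ with a disc bubble of class $\beta$, the principal strip component has Fredholm index $k-\mu(\beta)$; after quotienting by $\R$-translation its moduli has virtual dimension $k-\mu(\beta)-1$. The hypothesis $\mu_{\min}>2$ forces this to be $\leq k-4$, so for every $k\leq 3$ the principal stratum is generically empty and hence no such bubbled configuration can arise in the compactifications of the $0$-, $1$- and $2$-dimensional moduli actually used in (1)--(4). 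An identical count, using $c_1(TX)[s]\geq 1$ for any bubble sphere, rules out sphere bubbles in the same range of $k$. Consequently the codimension-one boundary of $\mathcal M(L_1,L_2;a,b;2,E)$ is precisely strip-breaking, (\ref{neweq122}) holds, and (1)--(3) go through verbatim. For (4), the standard continuation argument applies: an $s$-dependent family $J_s$ together with a path of moving boundary conditions between $(L_1,L_2)$ and $(\varphi(L_1),L_2)$ defines a chain map by counting $0$-dimensional moduli, and the same virtual-dimension count applied to a $1$-parameter family of such $J_s$ produces the chain homotopy showing independence of the choices.

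The hard part will be a clean setup of transversality in the presence of multiply covered discs and spheres, which cannot be perturbed away by varying $J$ alone. The standard resolution, which I would adopt, uses Lazzarini's decomposition of a non-simple $J$-holomorphic disc into a tree of simple sub-discs; monotonicity forces at least one simple component to carry energy $\geq\lambda\mu_{\min}$, and a stratification argument then shows the image of any such multiply covered disc bubble lies in the image of a strictly lower-dimensional simple stratum, which is avoided by the generic principal strip. The analogous treatment of spheres is the McDuff--Salamon analysis. Combined with a careful gluing theorem identifying a neighborhood of the strip-breaking boundary with the expected product (\ref{neweq122}), this reduces Theorem \ref{monotonecaseoh} to the dimension count of the previous paragraph on the generic $J$ for which all relevant virtual dimensions are honest.
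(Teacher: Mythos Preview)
Your proposal is correct and follows the same route the paper sketches: the paper's explanation (given just after the statement) considers a limit of index-$2$ strips, writes the remaining principal strip after a disc bubble $u''$ as having index $2-\mu([u''])<0$ by monotonicity and $\mu_{\min}>2$, and concludes this stratum is generically empty, leaving only the strip-breaking boundary (\ref{neweq122}). Your write-up is more complete---you also treat sphere bubbles, address transversality for multiply covered discs via Lazzarini-type decomposition, and spell out the continuation argument for item (4)---none of which the paper's brief sketch mentions, but the core mechanism is identical.

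One small point worth tightening: you phrase the emptiness criterion as ``virtual dimension after $\R$-quotient is negative,'' i.e.\ $k-\mu(\beta)-1<0$. This allows $k-\mu(\beta)=0$, where the constant strip (when $a=b$) survives regardless of transversality; the paper's Remark immediately following its sketch makes exactly this point to explain why $\mu_{\min}>2$ (strictly) is needed rather than $\ge 2$. For the moduli actually used in (1)--(4) you have $k\le 2$ for the strips (or parametrized, non-$\R$-quotiented moduli of index $\le 1$ for the continuation maps), so $k-\mu(\beta)\le -1$ and the issue does not arise---but your stated bound ``for every $k\le 3$'' would run into it at $k=3$, $\mu(\beta)=3$, $a=b$. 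Restricting the claim to the indices you actually need removes the ambiguity.
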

We will explain the notion of monotonicity and minimal Maslov
number later in this subsection.
\begin{rem}
\begin{enumerate}
\item
Under the assumption of Theorem \ref{monotonecaseoh},
Theorem \ref{FloerLag} (5) may not hold.
There exists however a spectral sequence whose $E^2$ term is 
$H(L;\Z_2)$ and which converges to $HF(\varphi(L),L)$.
\item
If we assume $L_i$ to be spin (or more generally $(X,L_1,L_2)$ are relatively 
spin) Theorem \ref{monotonecaseoh} (and item (1) of this remark) 
holds over $\Z$ coefficient.
This is proved in \cite[Chapters 2 and 6]{fooobook} and \cite[Chapter 8]{fooobook2}.
\end{enumerate}
\end{rem}
To apply Theorem \ref{monotonecaseoh} to gauge theory, we can use the next fact.
\begin{prop}\label{prop46}
Let $(M,\mathcal P_M)$ be a pair of an oriented 3 manifold with boundary 
and a principal $SO(3)$ bundle on it such that
$w^2(\mathcal P_M)\vert_{\partial M}$ is the fundamental class 
$[\partial M]$.
We also assume Assumption \ref{assum122}.
Moreover we assume $R(M;\mathcal P_M) \to R(\partial M;\mathcal P_{\partial M})$
is an {\it embedding} (that is, injective).
\par
Then $R(M;\mathcal P_M)$ is a monotone Lagrangian submanifold.
Its minimal Maslov number is congruent to $0$ modulo $4$.
\end{prop}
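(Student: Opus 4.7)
The plan is to reduce both the symplectic area and the Maslov index of any relative disk in $(R(\Sigma;\mathcal P_\Sigma), R(M;\mathcal P_M))$ to a single topological invariant --- the Pontryagin number of an $SO(3)$ bundle on a closed $4$-manifold obtained by capping the disk off with $M\times S^1$. Let $u:(D^2,\partial D^2)\to(R(\Sigma;\mathcal P_\Sigma),R(M;\mathcal P_M))$ be a smooth disk. The embedding hypothesis lets us lift $u|_{\partial D^2}$ uniquely to a loop $\widetilde u:S^1\to R(M;\mathcal P_M)$. Gluing a connection on $\Sigma\times D^2$ representing $u$ together with the $S^1$-family of flat connections on $M\times S^1$ represented by $\widetilde u$ (they agree on $\Sigma\times S^1$ after a gauge transformation) gives a connection $A_u$ on an $SO(3)$-bundle $\mathcal P_{W_u}$ over the closed $4$-manifold
\[
W_u \,:=\, (\Sigma\times D^2)\,\cup_{\Sigma\times S^1}\,(M\times S^1).
\]

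Since $A_u$ is flat on $M\times S^1$, the Chern--Weil integral $\int_{W_u}\mathrm{tr}(F_{A_u}\wedge F_{A_u})$ reduces to its contribution over $\Sigma\times D^2$, which by the Atiyah--Bott description $\omega(a,b)=\int_\Sigma\mathrm{tr}(a\wedge b)$ of the symplectic form equals $\int_{D^2}u^*\omega$ up to the universal factor $-8\pi^2$. Hence
\[
\int_{D^2}u^*\omega \,=\, c_0\,p_1(\mathcal P_{W_u})[W_u]
\]
for a fixed nonzero constant $c_0$; the symplectic area is a fixed multiple of the Pontryagin number.

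The Maslov index $\mu(u)$ is, by definition, the Fredholm index of a real Cauchy--Riemann operator on $D^2$ with Lagrangian boundary condition $T_{u(\theta)}R(M;\mathcal P_M)\subset T_{u(\theta)}R(\Sigma;\mathcal P_\Sigma)$. Using the gauge-theoretic description of these tangent spaces (Assumption~\ref{assum122} ensures the cohomology vanishings that make the restriction map from $R(M)$ transversely Lagrangian), combined with an adiabatic/neck-stretching argument in the spirit of Dostglou--Salamon matching the linearized Cauchy--Riemann operator on $R(\Sigma)$ with the linearized ASD operator on a $\Sigma\times[0,1]$-neck inside $W_u$, one identifies this Fredholm index with the index of the ASD deformation complex on $(W_u,\mathcal P_{W_u})$. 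The ASD dimension formula then gives $\mu(u)=2\,p_1(\mathcal P_{W_u})[W_u]$, with the topological correction constant $-3(1+b_+(W_u))$ cancelling once one takes the constant disk as reference. Combining with the area computation, $\mu(u)=C\int u^*\omega$ for a universal $C>0$, which is monotonicity. The divisibility $\mu(u)\in 4\Z$ then follows from the explicit factor of $2$ in front of $p_1$, combined with a Wu-formula computation on $W_u$: the constraint $w_2(\mathcal P_M)|_{\partial M}=[\partial M]$ forces $p_1(\mathcal P_{W_u})\in 2\Z$.

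The main obstacle is the identification used in the Maslov step. Matching the Fredholm index of the Cauchy--Riemann problem on $(D^2,\partial D^2)$ with Lagrangian boundary in $R(M;\mathcal P_M)$ to the ASD index on $W_u$ is a nontrivial analytical comparison, requiring either a direct Atiyah--Patodi--Singer computation with mixed (Lagrangian and spectral) boundary conditions, or a careful adiabatic-limit analysis proving Fredholm continuity as the neck inside $W_u$ is stretched. A secondary care point is verifying the Wu-type divisibility of $p_1(\mathcal P_{W_u})$: it depends on specific topological properties of $W_u$ (rather than on $u$ itself) and on correctly tracking the $w_2^2$ term via the Pontryagin square.
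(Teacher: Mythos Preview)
The paper does not give a proof of this proposition; it simply records it as a fact ``known to many researchers at least in late 1990's''. So there is nothing to compare against, and your outline stands on its own.

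Your strategy is the standard one and is correct in outline: both the symplectic area and the Maslov index of a disk $u$ are governed by $p_1$ of the $SO(3)$ bundle on the closed $4$-manifold $W_u=(\Sigma\times D^2)\cup_{\Sigma\times S^1}(M\times S^1)$. Two remarks may help you close the gaps you yourself flag.

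\emph{Maslov step.} Rather than invoking an adiabatic limit to match the Cauchy--Riemann index on $R(\Sigma)$ with the full ASD index on $W_u$, it is cleaner to argue homotopically. Both $\mu$ and $p_1(\mathcal P_{W_{(\cdot)}})$ define homomorphisms $\pi_2(R(\Sigma),R(M))\to\Z$. On the image of $\pi_2(R(\Sigma))$ the identity $\mu=2c_1$ is tautological, and the relation between $c_1(TR(\Sigma))$ and $p_1$ of the universal adjoint bundle is a Chern--Weil computation on $\Sigma\times S^2$ (this is where the minimal Chern number $2$ of $R(\Sigma;\mathcal P_\Sigma)$ enters). It then remains to check the relation on relative classes hitting $\pi_1(R(M))$, for which a single spectral-flow / APS comparison on $M\times S^1$ suffices; no global adiabatic analysis on $W_u$ is needed.

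\emph{Divisibility.} Your instinct to use a Wu-type relation is right, and the argument can be made sharp. Observe that $W_u=\partial(M\times D^2)$ as smooth manifolds, and that $w_2(\mathcal P_{W_u})$ is the restriction of $w_2(\mathcal P_M)\times 1\in H^2(M\times D^2;\Z_2)$ (this uses exactly the hypothesis $w_2(\mathcal P_M)|_\Sigma=[\Sigma]$ to match the two pieces). Since $M$ is $3$-dimensional, $(w_2(\mathcal P_M)\times 1)^2=0$, hence $w_2(\mathcal P_{W_u})^2=0$ in $H^4(W_u;\Z_2)$. The relation $p_1\equiv \mathcal P(w_2)\pmod 4$ for $SO(3)$ bundles (with $\mathcal P$ the Pontryagin square, whose mod-$2$ reduction is the cup square) then forces $p_1(\mathcal P_{W_u})\in 2\Z$, so $\mu(u)=-2p_1\in 4\Z$. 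This replaces your ``secondary care point'' with a two-line argument.
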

This fact is known to many researchers at least in late 1990's.
\par
\begin{thm}\label{embedcase}
Suppose $(M_i,\mathcal P_{M_i})$ satisfies the assumptions of 
Proposition \ref{prop46} for $i=1,2$. 
Theorem \ref{monotonecaseoh} implies that Floer homology
$HF(R(M_1;\mathcal P_{M_1}),R(M_2;\mathcal P_{M_2}))$ is well-defined.
We assume that $\partial M_1 = - \partial M_2$ and 
$\mathcal P_{M_1}\vert_{\partial M_1} \cong \mathcal P_{M_2}\vert_{\partial M_2}$.
\par
Then Instanton Floer homology is isomorphic to Lagrangian Floer homology:
\begin{equation}\label{form4545}
HF(M;\mathcal P_M) \cong HF(R(M_1;\mathcal P_{M_1}),R(M_2;\mathcal P_{M_2})).
\end{equation}
Here $(M,\mathcal P_M)$ is obtained by gluing 
$(M_1,\mathcal P_{M_1})$ and $(M_2,\mathcal P_{M_2})$ along their boundaries.
\end{thm}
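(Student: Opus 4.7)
The plan is to prove Theorem \ref{embedcase} by an adiabatic-limit argument modeled on Dostoglou-Salamon's proof of Theorem \ref{dostsala}, exploiting the embedding hypothesis to avoid the complications of immersed Lagrangians and singular target spaces.

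First, I would identify generators. Because $R(M_i;\mathcal{P}_{M_i})$ are embedded Lagrangians in $R(\Sigma;\mathcal{P}_\Sigma)$ and their intersection is canonically identified with $R_0(M;\mathcal{P}_M)$ (the trivial connection being excluded by Assumption \ref{assumption11}(1), which is forced by the requirement $w^2(\mathcal{P}_\Sigma)=[\Sigma]$), after a small perturbation ensuring transversality the underlying vector spaces of $CF(M;\mathcal{P}_M)$ and $CF(R(M_1;\mathcal{P}_{M_1}),R(M_2;\mathcal{P}_{M_2}))$ coincide generator-by-generator. Note that Theorem \ref{monotonecaseoh} applies to the right-hand side by Proposition \ref{prop46}, so the Lagrangian Floer homology is well-defined.

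Second, I would introduce a one-parameter family of metrics. Equip $M$ with metrics $g_T$ that isometrically insert a cylinder $\Sigma\times[-T/2,T/2]$ along $\Sigma$, so $M$ is isometric to $M_1\cup(\Sigma\times[-T/2,T/2])\cup M_2$ with a product metric on the neck. By Floer's invariance theorem the homology $HF(M;\mathcal{P}_M;g_T)$ is independent of $T$. The heart of the argument is then the claim that for $T\gg 0$, the rigid moduli $\mathcal{M}(M\times\R;a,b;0;g_T)$ are in oriented bijection with the rigid moduli of holomorphic strips $u:\R\times[0,1]\to R(\Sigma;\mathcal{P}_\Sigma)$ satisfying $u(\tau,0)\in R(M_1;\mathcal{P}_{M_1})$, $u(\tau,1)\in R(M_2;\mathcal{P}_{M_2})$, and the asymptotic conditions $a,b$. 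The proof would proceed in three steps: (i) uniform curvature and energy estimates showing that on the long neck an ASD connection is approximately flat in the $\Sigma$-direction, hence factors through a path in $R(\Sigma;\mathcal{P}_\Sigma)$; (ii) after the rescaling $s=t/T$, the ASD equation on the neck times $\R$ reduces at leading order to the Cauchy-Riemann equation on $\R\times[0,1]$ with Lagrangian boundary conditions from the restrictions to $\partial M_i$; (iii) a pregluing and Newton iteration producing, for each rigid holomorphic strip, a unique ASD connection for $T\gg 0$. Monotonicity with minimal Maslov number divisible by $4$ (Proposition \ref{prop46}) rules out disk bubbles, and the $SO(3)$ condition $w_2\ne 0$ rules out instanton bubbling off the neck, leaving a clean correspondence. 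Once the moduli spaces agree, so do the boundary operators defined by (\ref{form43}) and the Instanton differential, yielding (\ref{form4545}).

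The main obstacle is the uniform analysis on the collapsing neck in step (iii). The linearized ASD operator on $\Sigma\times[-T/2,T/2]$ acquires small eigenvalues of order $O(1/T)$ tangent to $R(\Sigma;\mathcal{P}_\Sigma)$, so standard elliptic estimates degenerate; one must separate these slow modes (which play the role of the derivative of the holomorphic strip) from the fast perpendicular modes (which must be controlled uniformly in $T$ by weighted Sobolev estimates). Dostoglou-Salamon executed this program in the mapping-torus case, and the adaptation to a general gluing $M_1\cup_\Sigma M_2$ is conceptually parallel but requires replacing the global symmetry of the mapping torus by a careful cut-off localization along the collar. As the introduction advertises, a more conceptual proof using cobordism and homological algebra is now available and bypasses most of this analytic machinery.
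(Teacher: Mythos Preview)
Your approach is genuinely different from the paper's. You propose an adiabatic neck-stretching limit \`a la Dostoglou--Salamon; the paper instead uses the cobordism argument sketched in the Remark following Theorem~\ref{embedcase} and carried out (for the more general Theorem~\ref{thm64}) in Subsection~\ref{gluing}. There one builds a $4$-manifold $X$ (the $Y$-shaped domain of Figures~\ref{zu8}--\ref{zu9}) with three ends and with boundary $\Sigma\times\R$, imposes Lagrangian boundary conditions given by $R(M_i;\mathcal P_{M_i})$ along the boundary, and counts ASD connections on $X$ to produce a chain map $\mathscr F: CF(R(M_1;\mathcal P_{M_1}),R(M_2;\mathcal P_{M_2}))\to CF(M;\mathcal P_M)$. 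The energy-zero (flat) part of this map is the identity under the generator identification you describe, so $\mathscr F$ is an isomorphism by a filtration argument. In the embedded monotone situation of Theorem~\ref{embedcase} no bounding cochains are needed and the argument simplifies; the analytic input is the Lagrangian-boundary-value ASD theory of \cite{fu3,Sawe,Li}, not a degeneration analysis. What this buys is that one never studies a degenerating family of elliptic operators: the moduli problem on $X$ is a single Fredholm problem, and the comparison of the two Floer theories is pushed into the boundary/end structure of a fixed cobordism.

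The paper explicitly notes that the adiabatic route you outline was pursued during 1998--2010 without a complete result for general $M_1\cup_\Sigma M_2$, and that the cobordism argument was revived precisely to avoid the degenerate estimates you flag in step (iii). Two specific points in your outline deserve correction. First, the hypothesis $w_2(\mathcal P_\Sigma)=[\Sigma]$ ensures $R(\Sigma;\mathcal P_\Sigma)$ is smooth (no reducibles); it does \emph{not} by itself exclude instanton bubbling on the neck, which is controlled by index and dimension counting. Second, the claim that the adaptation from the mapping-torus case is ``conceptually parallel'' understates the difficulty: Dostoglou--Salamon's proof exploits the global $S^1$-symmetry of the mapping torus, which is absent here, and uniformly matching the non-degenerating pieces $M_i\times\R$ to the collapsing neck is exactly the obstacle the cobordism method sidesteps.
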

\begin{rem}
Theorem \ref{embedcase} is claimed as \cite[Corollary 1.2]{fu7}.
The outline of its proof is given in \cite[Section 5]{fu7}.
The detail of the proof of Theorem \ref{embedcase} will be 
written in a subsequent joint paper \cite{DFL} with Aliakebar Daemi 
and Max Lipyanskiy.
\par
The cobordism argument used in \cite[Section 5]{fu7} 
appeared also in \cite[Section 8]{fu2} as the proof of
\cite[Theorem 8.7]{fu2}, which claims that there exists a 
homomorphism from the left hand side of (\ref{form4545}) 
to the right hand side of (\ref{form4545}).
It was conjectured but not proved in 
\cite[Conjecture 8.9]{fu2} that this homomorphism 
is an isomorphism.
We use the same map to prove Theorem \ref{embedcase}.
The idea which was missing in 1997 when 
\cite{fu2} was written is the following.
\begin{enumerate}
\item
When $\partial(M,\mathcal P_{M}) = (\Sigma,\mathcal P_{\Sigma})$,
we 
use $R(M;\mathcal P_{M})$ itself as a `test object'
of the functor: $\frak{Fuk}(R(\Sigma:\mathcal P_{\Sigma}))
\to \mathscr{CH}$. 
(Here $\mathscr{CH}$ is the $A_{\infty}$ category 
whose object is a chain complex.)
This is the functor 
which associates to $L$ (a Lagrangian submanifold 
of $R(\Sigma:\mathcal P_{\Sigma})$) the chain complex 
$CF((M;\mathcal P_{M});L)$ the  
`Floer's chain complex of $(M;\mathcal P_{M})$ with boundary 
condition given by $L$'.
(See Theorem \ref{thm612}.)
In other words we take $R(M;\mathcal P_{M})$ as 
the Lagrangian submanifold $L$ in (\ref{eq6666}).
\item
If we take $L = R(M_i;\mathcal P_{M_i})$ then 
the chain complex $CF((M;\mathcal P_M);L)$ 
in (\ref{eq6666}) can be identified with 
the De-Rham complex of $L$.
\end{enumerate}
Something equivalent to these two points 
appeared in the paper \cite{LL} by Lekili-Lipyanskiy.
After that it was used more explicitly in \cite{fu7}.
The same argument applied  
in the a similar situation as Theorem \ref{embedcase} was
explicitly mentioned in a talk by Lipyanskiy \cite{Li2}
done in 2012.
\par
It seems to the author that \cite{LL} is the paper which
revives the idea using the cobordism argument
in this and related problems and became the
turning point of the direction of the research.
During the years 1998-2010 
the cobordism argument proposed in 
\cite{fu2} was not studied and instead 
more analytic approach using adiabatic 
limit had been persued.
\end{rem}
In the rest of this subsection we explain the notion of 
montone Lagrangian submanifold and minimal Maslov number 
and a part of the idea of the proof of Theorem \ref{monotonecaseoh}.
\par
We first review the definition of Maslov index of a Lagrangian submanifold.
(See \cite[Subsection 2.1.1]{fooobook} for detail.)
Let $(X,L)$ be a pair of a symplectic manifold $X$ and its 
(embedded) Lagrangian submanifold $L$.
We take a compatible almost complex structure $J$ on $X$.
For a map $u : (D^2,\partial D^2) \to (X,L)$ we consider 
the equation $\overline\partial u =0$.
Its linearlization defines an operator
$$
D_u\overline{\partial} :
C^{\infty}((D^2,\partial D^2);(u^*TX,u^*TL))
\to C^{\infty}(D^2;u^*TX \otimes \Omega^{0,1}).
$$
We can show that there exists $\mu : H_2(X,L) \to \Z$ such that
\begin{equation}\label{indexdef}
{\rm Index}D_u\overline{\partial} = \mu([u]) + \dim L.
\end{equation}
The number $\mu([u])$ is called the Maslov index of $u$.
\begin{defn}
We call
$L$ a {\it monotone Lagrangian submanifold}, if 
there exists a positive number $c$ independent of $u$ such that
\begin{equation}\label{monotoneineq}
\mu([u]) = c \int_{D^2} u^*\omega
\end{equation}
for any $u:(D^2,\partial D^2) \to (X,L)$.
\par
The minimum Maslov number $L$ is by definition:
\begin{equation}
\inf \{ \mu([u]) \mid u : (D^2,\partial D^2) \to (X,L),
\,\, \mu([u]) \ne 0\}.
\end{equation}
\end{defn}
Now we briefly explain the reason why the equality (\ref{neweq122}) holds
in the situation of 
Theorem \ref{monotonecaseoh}.
Let $u_i : \R \times [0,1] \to X$ be a sequence of elements of 
${\mathcal M}(L_1,L_2;a,b;2,E)$.
By taking a subsequence if necessary we may assume that the 
limit looks like either  Figure \ref{zu3} or Figure \ref{zu4}.
The case of Figure \ref{zu3} corresponds to the right hand side of 
(\ref{neweq122}). So it suffices to see that Figure \ref{zu4}
does not occur.
The limit drawn in Figure \ref{zu4} can be regarded as a pair 
of $[u_{\infty}] \in {\mathcal M}(L_1,L_2;a,b;k,E')$
for some $k$ and $E'$ and 
$u'' : (D^2,\partial D^2) \to (X,L_i)$.
Since $u''$ is pseudoholomorphic we have 
$\int_{D^2} (u'')^*\omega > 0$.
Therefore (\ref{monotoneineq}) implies 
$\mu([u'']) > 0$.
Since the minimal Maslov number is greater than $2$ 
we have $\mu([u'']) > 2$.
By (\ref{indexdef}) and index sum formula we can show
$$
k = 2 - \mu([u'']) < 0.
$$
Using this fact we can show 
${\mathcal M}(L_1,L_2;a,b;k,E')$ is an empty set 
when appropriate transversality is satisfied.
This implies that Figure \ref{zu4} does not happen.

\begin{figure}[h]
\centering
\includegraphics[scale=0.2]{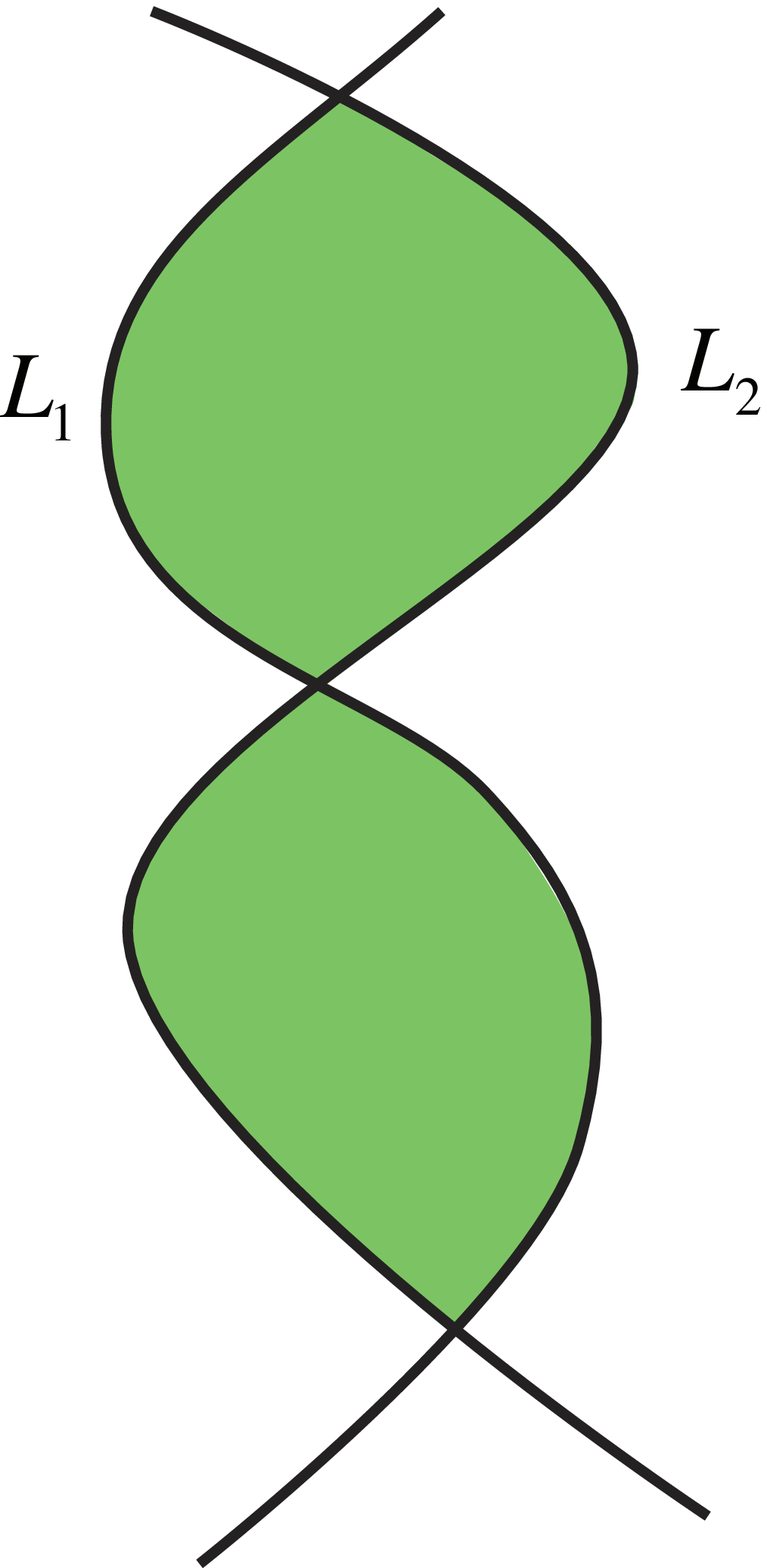}
\caption{End 1}
\label{zu3}
\end{figure}

\begin{figure}[h]
\centering
\includegraphics[scale=0.35]{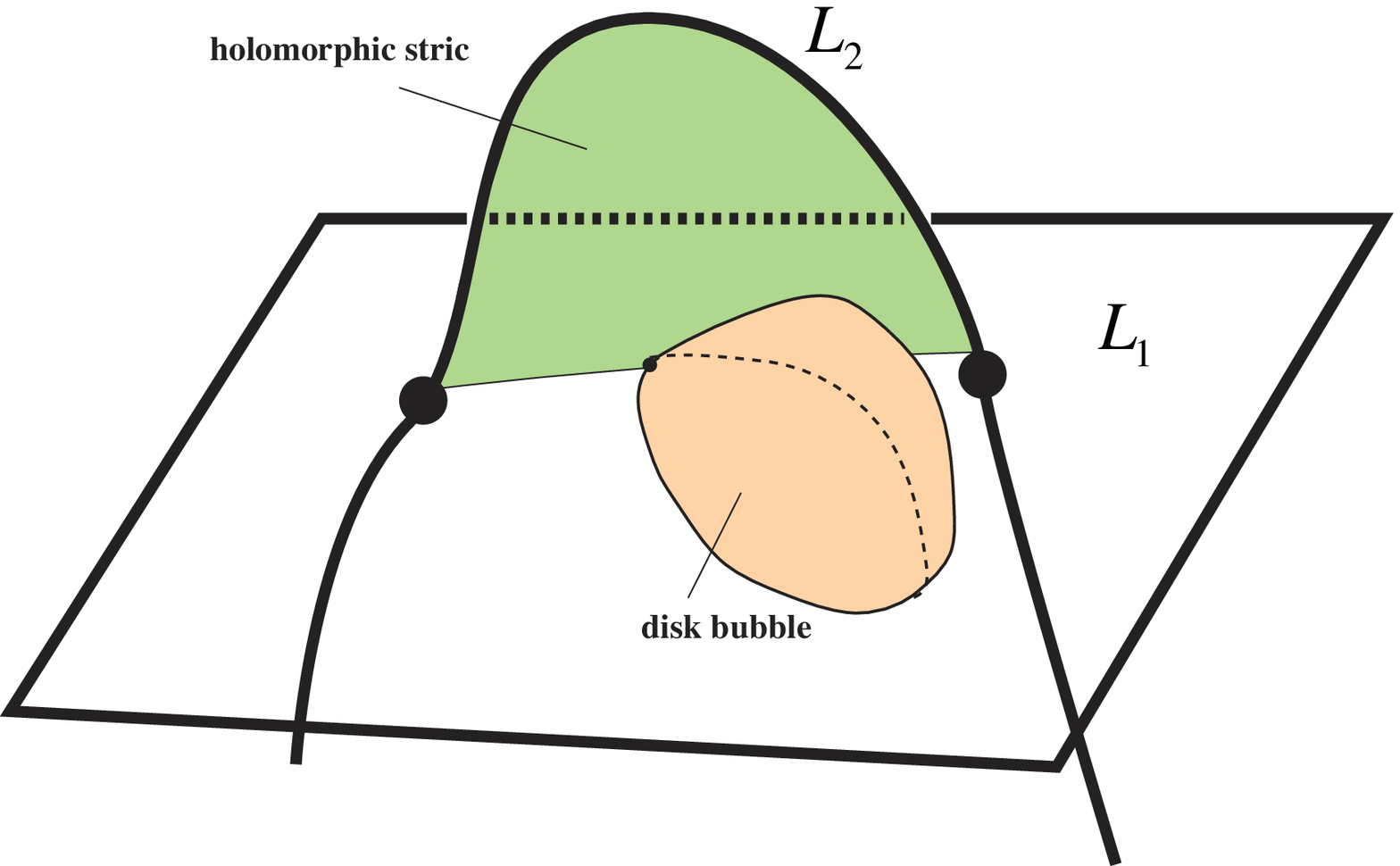}
\caption{End 2}
\label{zu4}
\end{figure}

\begin{rem}
Note in the case when the minimal Maslov number is $2$ 
we may have $k = 2 - 2 = 0$.
Since the (virtual) dimension of 
${\mathcal M}(L_1,L_2;a,b;k,E')$ is $k- 1 = -1$, 
one might imagine that this is enough to show 
that ${\mathcal M}(L_1,L_2;a,b;k,E')$ is an empty set.
However there is a case $a=b$.
In that case ${\mathcal M}(L_1,L_2;a,a;0,0)$ 
consists of one point (the constant map $u$ to $a$).
This is an element of ${\mathcal M}(L_1,L_2;a,a;0,0)$.
So it is nonempty. Note this element  is a 
fixed point of the $\R$ action.
So even though the virtual dimension of 
${\mathcal M}(L_1,L_2;a,a;0,0)$ is $-1$ it can  
still be nonempty.
This is the reason why we need to assume that minimal Maslov 
index is {\it strictly} larger than $2$ in Theorem \ref{monotonecaseoh}.
This point is studied in great detail in \cite[Section 3.6.3 etc.]{fooobook}.
The notion of potential function introduced in \cite[Definition  3.6.33]{fooobook}
is related to this point.
\end{rem}
As we have seen in this section, in the situation when 
the space $R(M;\mathcal P_M)$ is embedded in 
$R(\partial M;\mathcal P_{\partial M})$
we can use its monotonicity to define Lagrangian Floer homology 
and prove Theorem \ref{embedcase}.
\par
In general $R(M;\mathcal P_M)$ is an 
immersed Lagrangian submanifold in $R(\partial M;\mathcal P_{\partial M})$
even after making appropriate perturbation.
However we still have a kind of monotonicity.
\begin{defn}\label{defn4111}
Let $(X,\omega)$ be a symplectic manifold and 
$i_L : \tilde L \to X$ a Lagrangian immersion.
Namely
$i_L$ is an immersion, $\dim \tilde L = \frac{1}{2} \dim X$
and $i_L^*\omega = 0$.
\par
We say $L = (\tilde L,i_L)$ is {\it monotone in the weak sense}
if for each pair $(u,\gamma)$ such that $u : D^2 \to X$ and 
$\gamma : \partial S^1 \to \tilde L$ with 
$u\vert_{\partial D^2} = i_L \circ \gamma$ the equality
\begin{equation}\label{monotoneineq2}
\mu([u]) = c \int_{D^2} u^*\omega
\end{equation}
holds. (Note the Maslov index $\mu([u])$ can be defined in a similar way 
as embedded case.)
\par
The minimal Maslov number is defined in the same way.
\end{defn}
\begin{prop}
Let $(M,\mathcal P_M)$ be a pair of an oriented 3 manifolds with boundary 
and a principal $SO(3)$ bundle on it such that
$w^2(\mathcal P_M)\vert_{\partial M}$ is the fundamental class 
$[\partial M]$.
We also assume Assumption \ref{assum122}.
Moreover we assume $R(M;\mathcal P_M) \to R(\partial M;\mathcal P_{\partial M})$
is an {\it immersion}.
\par
Then $R(M;\mathcal P_M)$ is an immersed monotone Lagrangian submanifold
in the weak sense.
Its minimal Maslov number is congruent to $0$ modulo $4$.
\end{prop}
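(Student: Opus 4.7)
My plan is to generalise the proof of Proposition \ref{prop46} (the embedded case) by building a closed $4$-manifold out of the data $(u,\gamma)$ and computing both the Maslov index and the symplectic area as characteristic numbers of a gauge bundle on it. The new point is that the loop $\gamma:S^1\to\widetilde L=R(M;\mathcal P_M)$ lives in the \emph{source} of the immersion $i_L$, so it defines an honest family of flat connections on $\mathcal P_M$ even when $i_L\circ\gamma$ has self-intersections in $R(\Sigma;\mathcal P_\Sigma)$.

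First, I would form the closed oriented $4$-manifold
$$
\widehat X = (M\times S^1)\cup_{\Sigma\times S^1}(\Sigma\times D^2).
$$
The family $\gamma$ assembles into a connection $A_M$ on a bundle over $M\times S^1$; the disc $u$ assembles into a connection $A_\Sigma$ on a bundle over $\Sigma\times D^2$; the compatibility $u\vert_{\partial D^2}=i_L\circ\gamma$ lets them be glued (after a gauge fix on the collar) to a connection $A$ on a principal $SO(3)$-bundle $\widehat{\mathcal P}\to\widehat X$. Next I would compute the symplectic area: by the Goldman formula, $\omega$ pulls back along $u$ to a curvature-squared integrand on $\Sigma\times D^2$, while the contribution to $\int_{\widehat X}\mathrm{Tr}(F_A\wedge F_A)$ coming from $M\times S^1$ vanishes, since $A_M$ restricts to flat connections on each $M$-fibre and hence has curvature of the form $d\theta\wedge\beta$ with $\beta$ flat on each fibre. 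Thus $\int_{D^2}u^*\omega$ equals a positive universal multiple of $p_1(\widehat{\mathcal P})$.

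For the Maslov index I would use $\mathrm{Index}\,D_u\overline\partial=\mu([u])+\dim L$ from (\ref{indexdef}), together with a linear excision/adiabatic-limit argument identifying $D_u\overline\partial$ with the linearised ASD operator on $(\widehat X,A)$: the piece of the index coming from $M\times S^1$ absorbs exactly $\dim L$, by Assumption \ref{assum122} and Poincar\'e duality on each $M$-fibre. Atiyah-Singer then expresses the ASD index as a linear function of $p_1(\widehat{\mathcal P})$ plus a topological correction built from $\chi(\widehat X)$ and $\sigma(\widehat X)$, both of which vanish by the fibred structure of $\widehat X$. Combining the two computations yields (\ref{monotoneineq2}) with a definite constant $c>0$ depending only on $(\Sigma,\mathcal P_\Sigma)$. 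For the divisibility statement, the ASD index for $SO(3)$-bundles under our hypothesis on $w_2$ is automatically a multiple of $4$ (Donaldson's original dimension formula), and $\dim L\equiv 0\pmod 4$ under our hypotheses, so $\mu([u])\equiv 0\pmod 4$.

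The main obstacle will be the excision identification in the previous step, carried out in the immersed setting. One must verify that the linear gluing of $D_u\overline\partial$ on $D^2$ with the tangential operator on $M\times S^1$ produces the ASD operator on $\widehat X$ even when $i_L\circ\gamma$ passes through self-intersections of $\widetilde L$. Since the family of connections on $M$ is parametrised by $\gamma$ in the source, the two local branches of $\widetilde L$ at any double point stay decoupled, so the embedded-case argument localises in a tubular neighbourhood of each point of $\gamma$ without modification. In effect, once the embedded case of Proposition \ref{prop46} is established by this route, the immersed version follows from the same index computation together with a bookkeeping argument that introduces no additional analytic difficulty.
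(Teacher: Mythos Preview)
The paper does not actually prove this proposition (nor the embedded version, Proposition~\ref{prop46}); both are asserted as facts known to experts since the late 1990's. Your overall strategy is the standard one, and your key observation---that $\gamma$ lands in the source $\widetilde L=R(M;\mathcal P_M)$, so one obtains an honest family of flat connections on $M$ and hence a well-defined bundle on $\widehat X$ regardless of the self-intersections of $i_L$---is precisely what reduces the immersed case to the same index computation as the embedded one.

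That said, there are two concrete errors in your bookkeeping. First, $\chi(\widehat X)$ does \emph{not} vanish: Mayer--Vietoris gives $\chi(\widehat X)=\chi(M\times S^1)+\chi(\Sigma\times D^2)-\chi(\Sigma\times S^1)=2-2g$, while you are right that $\sigma(\widehat X)=0$ since $\widehat X=\partial(M\times D^2)$. So the topological correction in the ASD index formula is $-\tfrac{3}{2}(\chi+\sigma)=3(g-1)=\dim L$, not zero. You should redo the excision carefully with this in hand; the $\dim L$ cancellation you are after does occur, but the contribution comes from the $\chi$-term in Atiyah--Singer rather than from the $M\times S^1$ piece as you wrote. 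Second, your divisibility argument fails as stated: $\dim L=3(g-1)$ is not $\equiv 0\pmod 4$ for general $g$, and the ASD index on an arbitrary closed $4$-manifold is certainly not always a multiple of $4$. Once the excision is done correctly you will have $\mu([u])=-2\,p_1(\widehat{\mathcal P})$, and the honest reason this is divisible by $4$ is the congruence $p_1\equiv\mathcal P(w_2)\pmod 4$ for $SO(3)$-bundles (Pontryagin square) together with a computation of $w_2(\widehat{\mathcal P})$ on $\widehat X$. This is the same index-periodicity that produces the $\Z_4$ grading in Theorem~\ref{thm1-5}; your last two sentences should be replaced by that argument.
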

However we cannot generalize Theorem \ref{monotonecaseoh}
to the case of immersed monotone Lagrangian submanifold
in the weak sense with minimal Maslov number $>2$.
In fact other than those drawn in Figures \ref{zu3} and \ref{zu4}
there exists another type of boundary of the moduli space 
${\mathcal M}(L_1,L_2;a,b;2,E)$, which is drawn in  Figure \ref{zu5}
below.
So (\ref{neweq122}) does not hold in this generality.
\par
Neverthless actually we can still define the right hand side of 
(\ref{form4545}) and can prove the isomorphism (\ref{form4545}).
For this purpose we need various new ideas which was developped 
in Lagrangian Floer theory during 1996-2009.
We describe some of them in the next section.
\begin{figure}[h]
\centering
\includegraphics[scale=0.3]{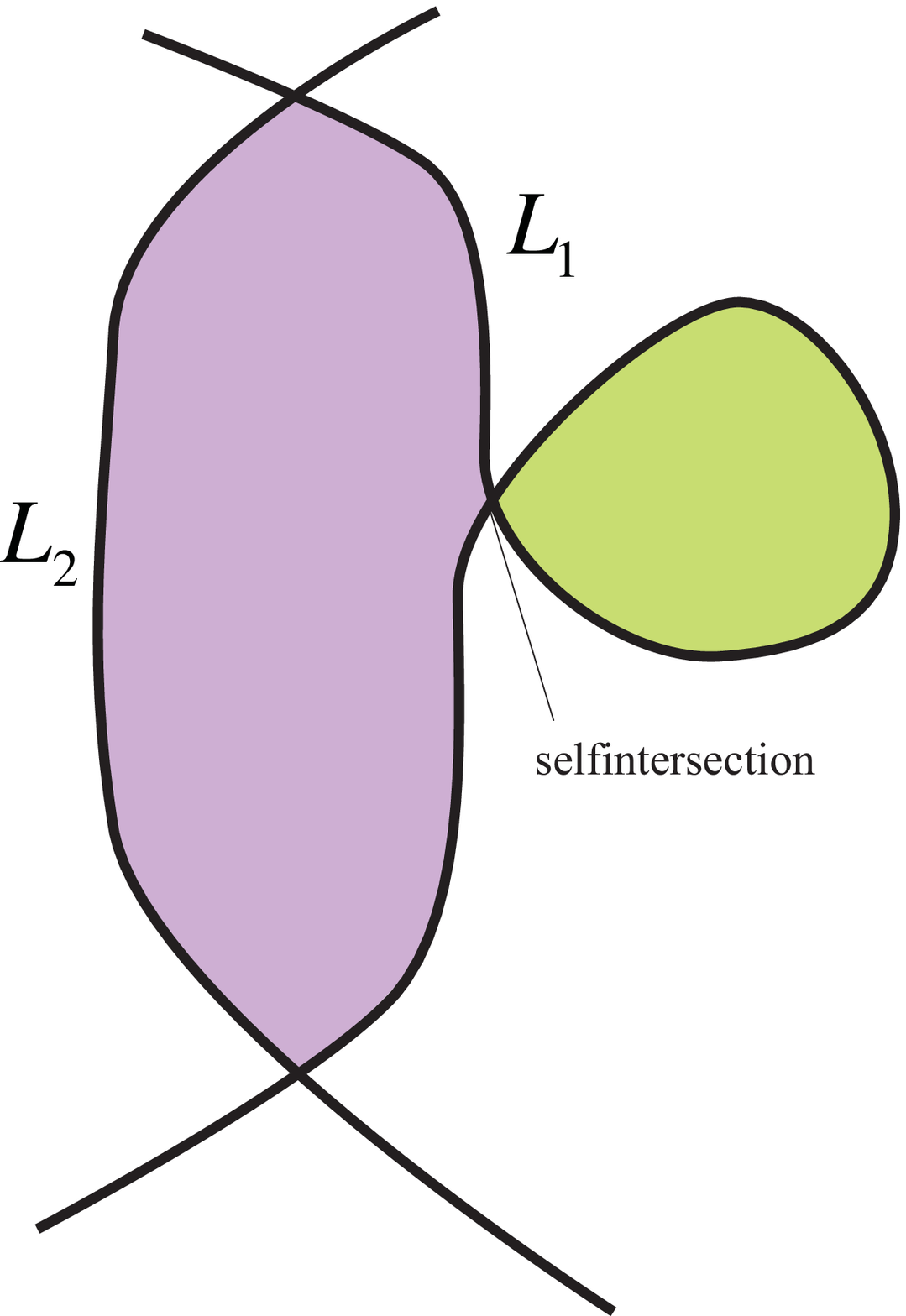}
\caption{End 3}
\label{zu5}
\end{figure}

\section{Biased review of Lagrangian Floer theory II}
\label{section:Lagrangian Floer theory2}
\subsection{Filtered $A_{\infty}$ category}
\label{Ainfinitycate}
This subsection is a brief introduction to filtered $A_{\infty}$ category and algebra.
For more detail, see \cite{AFOOO1,efl,fu4,fooobook,ancher,Ke,Lef,Ly,sei}
and etc..
\par
For the application to gauge theory such as Problem \ref{prob21}
we can work on $\Z$ or $\Z_2$ coefficient as we will explain in Subsection \ref{mainthm}.
However for the general story it is more natural to use universal Novikov ring
which was introduced in \cite{fooobook}.
We first define it below.
Let $R$ be a commutative ring with unit, which we call ground ring.
The reader may consider $R = \Z_2, \Z, \R$ or $\C$.
We take a formal variable $T$ and consider the formal sum
\begin{equation}\label{formalsum51}
\sum_{k=0}^{\infty} a_k T^{\lambda_k}
\end{equation}
such that:
\begin{enumerate}
\item[(NR.1)]
$a_k \in R$.
\item[(NR.2)]
$\lambda_k \in \R_{\ge 0}$.
\item[(NR.3)]
$\lambda_k < \lambda_{k+1}$.
\item[(NR.4)]
$\lim_{k\to \infty} \lambda_k = \infty$.
\end{enumerate}
We call the totality of such formal sum 
(\ref{formalsum51}) the {\it universal Novikov ring} and denote it by $\Lambda_0^R$.
We replace (NR.2) by $\lambda_k \in \R$ (resp. $\lambda_k > \R$) to define 
$\Lambda^R$ (resp. $\Lambda_+^R$.)
$\Lambda_0^R$ and $\Lambda^R$ become rings with unit in an obvious way.
$\Lambda^R_+$ is an ideal of $\Lambda_0^R$.
In case $R$ is a field, $\Lambda^R$ becomes a field, which we call 
(universal) Novikov field.
In case $R$ is a field,  $\Lambda^R_+$ is a maximal ideal of $\Lambda_0^R$.
In fact $\Lambda_0^R/\Lambda_+^R = R$.
\par
The ring $\Lambda^R$ has a filtration $F^{\lambda}\Lambda^R$ which consists of 
(\ref{formalsum51}) with $a_0 \ge \lambda$. It induces 
a filtration $F^{\lambda}\Lambda^R_0$ of $\Lambda^R_0$.
This filtration defines a metric on these rings, by which they become complete.
\par
Hereafter we omit $R$ and write $\Lambda_0$, $\Lambda$, $\Lambda_+$
in place of 
$\Lambda_0^R$, $\Lambda^R$, $\Lambda_+^R$, sometimes.
\begin{defn}
A filtered $A_{\infty}$ category $\mathscr C$  consists of the following objects.
\begin{enumerate}
\item
A set $\frak{Ob}(\mathscr C)$, which is the set of objects.
\item
A graded $\Lambda_0$ module $\mathscr
C(c_1,c_2)$ for each
$c_1,c_2 \in \frak{Ob}(\mathscr C)$.
We call $\mathscr C(c_1,c_2)$ the {\it module of morphisms}.
It is a completion of a free  $\Lambda_0$ module.
(We may consider $\Z$ grading or $\Z_{2N}$ grading for some $N \in \Z_{>0}$.
In our application in  Section \ref{boundary}, we have $\Z_{4}$ grading.
The number $2N$ in our geometric applications is a minimal Maslov number 
explained in the last section.)
\item
For each
$c_0,\dots, c_k \in \frak{Ob}(\mathscr C)$, we are given  operations
($\Lambda_0$ linear homomorphisms)
\begin{equation}\label{strmapop}
{\frak m}_k :  \mathscr C[1](c_0,c_1) \widehat\otimes \cdots \widehat\otimes \mathscr C[1](c_{k-1},c_k) 
\to \mathscr C[1](c_0,c_k),
\end{equation}
of degree $+1$ for $k=0,1,2,\cdots$ and $c_i \in \frak{Ob}(\mathscr C)$,
which preserves the filtration.
We call it {\it structure operations}.
Here $\mathscr C[1](c,c')$ is the degree shift of $\mathscr C(c,c')$. Namely
the degree $k$ part of $\mathscr C[1](c,c')$ is degree $k+1$ part of $\mathscr C(c,c')$.
The symbol $\widehat\otimes$ denotes the $T$-adic completion of the algebraic 
tensor product.
\item
The following $A_{\infty}$ relation is satisfied.
\begin{equation}\label{formula25}
0=\sum_{k_1+k_2=k+1}\sum_{i=0}^{k_1-1}
(-1)^* \frak m_{k_1}(x_1,\dots,x_i,\frak m_{k_2}(x_{i+1},\dots,x_{k_2}),
\dots,x_k),
\end{equation}
where 
$* = i +\sum_{j=1}^i \deg x_j$, 
$x_i \in \mathscr C[1](c_{i-1},c_i)$, $c_0,\dots,c_k \in \frak{Ob}(\mathscr C)$.
\item
We require
$$
\frak m_0(1) \equiv 0 \mod T^{\epsilon},
$$
for some $\epsilon > 0$.
\item
An element ${\bf e}_{c} \in \mathscr C(c,c)$ of degree $0$
is given for each $c \in \frak{Ob}(\mathscr C)$
such that:
\begin{enumerate}
\item If $x_1 \in \mathscr C(c,c')$, $x_2 \in \mathscr C(c',c)$ then
$$
\frak m_2(\text{\bf e}_c,x_1) = x_1,
\quad \frak m_2(x_2,\text{\bf e}_c) = (-1)^{\deg x_2}x_2.
$$
\item
If $k+\ell \ne 1$, 
$x_1 \otimes \dots \otimes x_{\ell} 
\in B_{\ell}\mathscr C[1](a,c)$,
$y_1 \otimes \dots \otimes y_{k} 
\in B_k\mathscr C[1](c,b)$  then
\begin{equation}\label{formidenty}
\frak m_{k+\ell+1}(x_1,\cdots,x_{\ell},\text{\bf e}_c,y_1,\cdots,y_{k}) = 0.
\end{equation}
We call ${\bf e}_{c}$ the {\it unit}.\footnote{In some reference we do not assume unit to exist 
for filtered $A_{\infty}$ category. In this article we assume it to simplify the notation.}
\end{enumerate}
\par
\end{enumerate}
\par
A filtered $A_{\infty}$ category with one object is called a {\it filtered $A_{\infty}$ algebra}.
\par
A filtered $A_{\infty}$ category or algebra is called {\it strict} if $\frak m_0 =0$.
It is called {\it curved} otherwise.
\end{defn}
Note $\frak m_1 : \mathscr C(c,c') \to \mathscr C(c,c')$ is degree one and is regarded as a `boundary operator'.
However in general $\frak m_1 \circ \frak m_1 = 0$ does {\it not hold}.
In fact (\ref{formula25}) implies
\begin{equation}\label{form54}
\frak m_1(\frak m_1(x)) + \frak m_2(\frak m_0(1),x) + (-1)^{\deg x+1} \frak m_2(x,\frak m_0(1)) = 0.
\end{equation}
On the other hand (\ref{form54}) implies that $\frak m_1 \circ \frak m_1 = 0$ 
if  filtered $A_{\infty}$ category $\mathscr C$ is strict.
The algebraic point about the well defined-ness of $\frak m_1$ homology,
which we mentioned above,
is closely related to the geometric problem to define Lagrangian Floer theory,
which we mentioned 
at the end of the last section. We will go back to this point in the next subsection.
\par
An idea introduced in \cite{fooobook} is to deform Floer's boundary operator 
$\frak m_1$ to $\frak m_1^b$ so that $\frak m^b_1 \circ \frak m^b_1 = 0$  
holds.
\begin{defn}\label{defn52}
Let $c \in \frak{Ob}(\mathscr C)$. A {\it bounding cochain} 
(or {\it Maurer-Cartan element}) of $c$ is an element 
$b \in \mathscr C(c,c)$ such that:
\begin{enumerate}
\item
The degree of $b$ is $1$.
\item
$b \equiv 0 \mod \Lambda_+$.
\item
\begin{equation}\label{form55}
\sum_{k=0}^{\infty} \frak m_k(b,\dots,b) = 0.
\end{equation}
\end{enumerate}
Note (2) implies that the left hand side of (\ref{form55}) converges in $T$ adic topology.
\par
We denote by $\widetilde{\mathcal M}(c)$ the set of all bounding cochains.
We say an object $c$ is {\it unobstructed} if $\widetilde{\mathcal M}(c)$
is nonempty.
\end{defn}
\begin{rem}
\begin{enumerate}
\item
We can define appropriate notion of gauge equivalence among elements of 
$\widetilde{\mathcal M}(c)$. (See \cite[Section 4.3]{fooobook}.)  The set 
of all gauge equivalence classes is called 
{\it Maurer-Cartan moduli space} and is written as ${\mathcal M}(c)$.\footnote{It can be regarded 
as a set of rigid points of certain rigid analytic stack.}
\item
In certain situation we may relax the condition Definition \ref{defn52} (2)
and can use a class $b \in \mathscr C(c,c)$ of degreee $1$ 
which satisfies (\ref{form55}). (In such a case the left hand side 
of (\ref{form55}) should be defined carefully.)
We write $\widetilde{\mathcal M}(c;\Lambda_+)$ in place of $\widetilde{\mathcal M}(c)$ 
when we want to clarify that we consider only the elements satisfying  Definition \ref{defn52} (2).
\end{enumerate}
\end{rem}
\begin{defnlem}
Let $c, c' \in \frak{Ob}(\mathscr C)$ and $b \in \widetilde{\mathcal M}(c)$,
$b' \in \widetilde{\mathcal M}(c')$.
We define 
\begin{equation}
\frak m_1^{b,b'} : \mathscr C(c,c') \to \mathscr C(c,c')
\end{equation}
by the formula:
\begin{equation}
\frak m_1^{b,b'}(x) = \sum_{k,\ell=0}^{\infty}
\frak m_{k+\ell+1}(\underbrace{b,\dots,b}_{k},x,\underbrace{b',\dots,b'}_{\ell}).
\end{equation}
\par
(\ref{formula25}) and (\ref{form55}) imply
$$
\frak m_1^{b,b'} \circ \frak m_1^{b,b'} = 0.
$$
We define 
\begin{equation}
HF((c,b),(c',b')) = \frac{{\rm Ker}(\frak m_1^{b,b'} : \mathscr C(c,c') \to \mathscr C(c,c')
)}{{\rm Im}( \frak m_1^{b,b'} : \mathscr C(c,c') \to \mathscr C(c,c')
)} 
\end{equation}
and call it the Floer cohomology of $(c,b)$ and $(c',b')$. 
\end{defnlem}
We can deform $\frak m_k$ in the same way as follows.
Hereafter we write
$
\frak m_{k+\ell+1}(b^k,x,(b')^{\ell})
$
etc. in place of 
$\frak m_{k+\ell+1}(\underbrace{b,\dots,b}_{k},x,\underbrace{b',\dots,b'}_{\ell})$
etc..
\begin{defnlem}
Let $\mathscr C$ be a curved filtered $A_{\infty}$
category. We define a strict filtered $A_{\infty}$ category 
$\mathscr C'$ as follows.
\begin{enumerate}
\item
An object of $\mathscr C'$ is a pair $(c,b)$ where $c \in 
\frak{OB}(\mathscr C)$
and $b \in \widetilde{\frak M}(c)$.
\item
If $(c,b),(c',b')$ are objects of $\mathscr C'$ then
$
\mathscr C'((c,b),(c',b')) = \mathscr C(c,c')
$
by definition.
\item\label{form5959}
If $(c_i,b_i) \in \frak{OB}(\mathscr C')$ for $i=0,\dots,k$
and $x_i \in \mathscr C'((c_{i-1},b_{i-1}),(c_i,b_i)) = \mathscr C(c_{i-1},c_i)$
for $i=1,\dots,k$. Then we define the structure operations $\frak m_k^{(b_0,\dots,b_k)}$ 
of $\mathscr C'$ as follows.
\begin{equation}\label{form5959}
\frak m_k^{(b_0,\cdots,b_k)}(x_1,\cdots,x_k)
= \sum_{\ell_0,\cdots,\ell_k}
\frak m_{k+\ell_0+\cdots+\ell_k}(b_0^{\ell_0},x_1,b_1^{\ell_1},\cdots,
b_{k-1}^{\ell_{k-1}},x_k,b_k^{\ell_k}).
\end{equation}
\par
We call $\mathscr C'$ the {\it associated strict category} to $\mathscr C$.
\end{enumerate}
\end{defnlem}
Note 
$$
\frak m_0^{(b)}(1) = \sum_{k=0}^{\infty} \frak m_k(b,\dots,b) = 0
$$
by (\ref{form55}).
We omit the proof that the structure operations 
(\ref{form5959}) satisfies the relation (\ref{formula25}), 
which is an easy calculation.

\subsection{Immersed Lagrangian submanifold and its Floer homology}
\label{immersed}

Let $i_L : \tilde L \to X$ be an $n$ dimensional immersed submanifold of 
a symplectic manifold $X$ of dimension $2n$.
We say $L = (\tilde L,i_L)$ is an 
{\it immersed Lagrangian submanifold}
if $i_L^*\omega = 0$.
\par
\begin{defn}\label{defn56}
We say that $L$ has {\it clean self-intersection} if 
the following holds.
\begin{enumerate}
\item
The fiber product $\tilde L \times_X \tilde L$ is a 
smooth submanifold of $\tilde L \times \tilde L$.
\item
For each $(p,q) \in \tilde L \times_X \tilde L$
we have
$$
\{(V,W) \in T_{p}\tilde L \oplus T_{q}\tilde L
\mid di_L(V) = di_L(W)\}
=
T_{(p,q)}(\tilde L \times_X \tilde L).
$$
\end{enumerate}
We say $L$ has {\it transversal self-intersection}
if 
$$
(\tilde L \times_X \tilde L) 
\setminus \tilde L 
$$
is a finite set. (Note the fiber product 
$\tilde L \times_X \tilde L$ contains the 
diagonal $\cong \tilde L$.)
\par
A finite set of immersed Lagrangian submanifolds $\{L_i \mid
i=1,\dots,N\}$ is said a {\it clean collection} 
(resp. {\it transversal collection}) if 
the disjoint union $\bigcup_{i=1}^N L_i$ has 
clean self-intersection (resp. has transversal self-intersection).
\end{defn} 
Lagrangian Floer theory in 
\cite{fooobook, fooobook2} associates a filtered $A_{\infty}$
algebra to an embedded Lagrangian submanifold $L$.
This $A_{\infty}$ algebra as a $\Lambda_0$ modle is taken 
to be the cohomology group of $L$ or any of its chain model.
Namely it defines a homomorphism
$
\frak m_k : H(L;\Lambda_0)^{\otimes k} \to H(L;\Lambda_0)
$
satisfying (\ref{formula25}).
It also associates a filtered $A_{\infty}$ 
category ${\frak{Fuk}}(\frak L)$ to a transversal collection of embedded Lagrangian 
submanifolds $\frak L = \{L_i \mid i=1,\dots,N\}$
as follows.
\begin{enumerate}
\item[(L.C1)]
The set of object is $\frak L$.
\item[(L.C2)]
For $L_{i},L_{j} \in \frak L$, the module of morphisms, which we write 
$CF(L_{i},L_{j})$, is defined as follows:
\begin{enumerate}
\item
If $i \ne j$ then it is a free $\Lambda_0$ module whose 
basis is identified with $L_i \cap L_j$.
\item
If $i=j$ then $CF(L_{i},L_{i}) = H(L_i;\Lambda_0)$.
\footnote{We may also take $CF(L_{i},L_{i})$ (in principle any) chain model 
of the cohomology of $L_i$. For example in \cite{fooobook, fooobook2, 
ancher,fu3}
the singular chain complex is used. In 
\cite{AFOOO1,efl,foootech2,foootech22} etc. 
de-Rham complex is used.}
\end{enumerate} 
\item[(L.C3)]
The structure operations (\ref{strmapop}) is defined by 
using the moduli space of pseudoholomorphic $k+1$ gons.
\end{enumerate}
Figure \ref{zu6} below is the moduli space of $k+1$ gons, which 
calculate the coefficient of $[p]$ in 
$
\frak m_7(x_1,\dots,x_7).
$
\begin{figure}[h]
\centering
\includegraphics[scale=0.3]{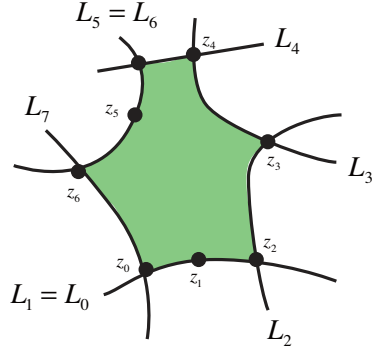}
\caption{$\langle\frak m_7(x_1,\dots,x_7),x_0\rangle$}
\label{zu6}
\end{figure}
\par
Akaho-Joyce \cite{AJ} generalized this story to the immersed case as follows.
Let $\frak L$ be a transversal collection of immersed Lagrangian 
submanifolds $\frak L = \{L_i \mid i=1,\dots,N\}$.
Then we have a 
filtered $A_{\infty}$ category satisfying 
(L.C1)-(L.C3), except (L.C2) (2) is replaced by
\begin{enumerate}
\item[(2')]
If $i=j$ then 
\begin{equation}\label{immFl}
CF(L_{i},L_{i}) 
= H(L_i;\Lambda_0) 
\oplus \bigoplus \Lambda_0 (p,q)
\end{equation}
where the direct sum is taken over all
$(p,q)$
such that $i_{L_i}(p) = i_{L_i}(q)$ and 
$p\ne q$.
\par
We say $\bigoplus \Lambda_0 (p,q)$ the {\it switching part} 
and $H(L_i;\Lambda_0)$ the {\it diagonal part} of 
$CF(L_{i},L_{i})$.
\end{enumerate}
We remark that $(p,q)$ is an {\it ordered} pair. Namely
$(p,q) \ne (q,p)$. So we associate extra two generators 
to each self-intersection.
\par
The definition of structure operation including 
the generator $(p,q)$ is similar and by using 
the moduli space drawn below.
\begin{figure}[h]
\centering
\includegraphics[scale=0.3]{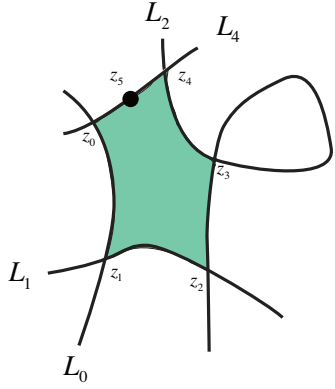}
\caption{$\langle\frak m_7(x_1,\dots,x_5),x_0\rangle$}
\label{zu7}
\end{figure}
Note the 3rd marked point in the figure corresponds 
$(p,q)$ which is the extra generator appearing in 
item (2)' above. Those generators correspond to the switchings at the 
boundary curve. We remark if $x = i_{L_i}(p) = i_{L_i}(q)$
is the self intersection point of $L_i$, 
the intersection $L_i \cap U$ of $L_i$ with a small neighborhood $U$ of $x$ 
consists of two smooth $n$ dimensional submanifolds.
They are image of neighborhoods of $p$ and of $q$ respectively.
Switching at 3rd marked point means that the boudary value 
was on one of the components of  $L_i \cap U$
for $z \in \partial D^2$ $z<z_3$ and 
on the other component of $L_i \cap U$
for $z \in \partial D^2$ $z>z_3$.
There are two different ways how the 
switching occurs. One is the switching from the component containing $p$
to the component containing $q$, and the other is the switching from the component containing $q$
to the component containing $p$.
Those two different ways of switching correspond to the two generators 
appearing in (\ref{immFl}).
\par

\subsection{The case of immersed Lagrangian submanifold
which is monotone in the weak sense}
\label{subsec:weakmonotone}

Note (\ref{form54}) shows that the term $\frak m_0(1)$\footnote{which is 
called curvature sometimes} causes the 
problem to define Floer homology.
Namely if $\frak m_0(1) = 0$ then 
$\frak m_1 \frak m_1 = 0$.
In other words we are looking for an appropriate 
element $b$ of $CF(L_i,L_i)$
such that $\frak m^b_0(1) = 0$.
We called such $b$ a {\it bounding cochain}.
The progress we made recently in our study 
of 3+2 dimensional Donaldson-Floer theory, 
is that we can now prove the existence of such $b$ 
in the situation appearing in it.
We explain it in Section \ref{boundary}.
Before doing so we go back to the situation of 
Subsection \ref{subsec:monotone}.
\par
$\frak L$ to a transversal collection of immersed Lagrangian 
submanifolds $\frak L = \{L_i \mid i=1,\dots,N\}$.
\begin{prop}\label{prop5757}
Suppose each of $L_i$ are 
immersed monotone Lagrangian submanifolds
in the weak sense and its minimal Maslov index 
is greater than $2$. Then the element  
$$
\frak m_0(1) \in CF(L_i,L_i)
$$
lies in the switching part.
\end{prop}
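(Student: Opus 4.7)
The plan is to unwind the definition of $\frak m_0(1)$ in the immersed setting of Akaho--Joyce and show that, under the monotonicity hypothesis, the only contributions that survive are those in which the boundary of the disk switches branches at the unique output marked point.

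First I would recall that $\frak m_0(1)$ is defined as a weighted sum over moduli spaces of pseudoholomorphic disks $u : (D^2,\partial D^2) \to (X,L_i)$ equipped with a single boundary marked point, where the boundary is a piecewise smooth lift to $\tilde L_i$ that may only jump over self-intersection points at boundary marked points. Since $\frak m_0(1)$ has a single marked point (the output), the boundary of $u$ has at most one branch-switch, located at the output. This divides the contributions into two types. In the \emph{non-switching} type the boundary lifts to an honest loop in $\tilde L_i$ and the output lies in the diagonal part $H^*(L_i;\Lambda_0)$. In the \emph{switching} type the lift has distinct endpoints $p,q\in\tilde L_i$ with $i_{L_i}(p)=i_{L_i}(q)$, and the output is $(p,q)$ in the switching part. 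It therefore suffices to show that the non-switching contribution is zero.

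Next I would compute the expected dimension of the moduli space $\mathcal M_1(\beta)$ of non-switching disks in class $\beta\in H_2(X,\tilde L_i)$ with one output marked point. The standard index calculation gives
\[
\operatorname{virdim}\mathcal M_1(\beta) = \mu([u]) + n + 1 - 3 = \mu([u]) + n - 2,
\]
where $n = \dim L_i$. Since the evaluation at the boundary marked point takes values in $L_i$, the corresponding cochain in $H^*(L_i;\Lambda_0)$ has degree $n - \operatorname{virdim}\mathcal M_1(\beta) = 2 - \mu([u])$.

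Now I would invoke the hypotheses. By the condition $\frak m_0(1)\equiv 0 \mod T^\epsilon$, only classes with $\omega(\beta) > 0$ contribute; in particular constant disks are excluded. For any such $\beta$ realized by a pseudoholomorphic disk, weak monotonicity (Definition~4.11) gives $\mu([u]) = c\,\omega(\beta) > 0$, so $\mu([u])$ is a positive integer realized by a disk, and therefore $\mu([u]) \geq$ the minimal Maslov number, which is assumed to be $> 2$. Hence $\mu([u]) \geq 3$, so $2 - \mu([u]) < 0$, and the corresponding cochain vanishes because $H^*(L_i;\Lambda_0)$ is concentrated in degrees $[0,n]$. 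Summing over $\beta$, the diagonal part of $\frak m_0(1)$ is zero, and $\frak m_0(1)$ lies in the switching part as claimed.

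The main obstacle, and the only point that requires care beyond the index count, is justifying the dimension formula and the exclusion of constant disks in the immersed setting: one must check that the Akaho--Joyce definition of the Maslov index for disks with branched boundary agrees with the one used in Definition~4.11, and that the dimension formula $\mu + n - 2$ for $\mathcal M_1(\beta)$ is insensitive to the presence of self-intersection points of $\tilde L_i$ away from the boundary. Both points are standard in the Akaho--Joyce framework once one works with the lift to $\tilde L_i$, so the argument reduces cleanly to the degree count above.
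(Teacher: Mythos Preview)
Your proposal is correct and follows essentially the same route as the paper: identify the diagonal (non-switching) part of $\frak m_0(1)$ as coming from disks whose boundary lifts to $\tilde L_i$, compute the virtual dimension $\mu(\beta)+n-2$, and observe that $\mu(\beta)>2$ forces the resulting class to land outside the degree range of $H^*(L_i)$. The only cosmetic difference is that the paper phrases the vanishing in homology (the virtual fundamental class lies in $H_*(L_i)$ with $*>n$) whereas you phrase it in cohomology (degree $2-\mu(\beta)<0$); these are equivalent.
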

\begin{proof}\label{prop57}
The diagonal part of $\frak m_0(1)$ is 
defined as the (virtual) fundamental class of
the moduli space of the pair 
$((D^2,\partial D^2),z_0),u)$
where $u :  (D^2,\partial D^2) \to (X,L_i)$ is a 
pseudoholomorphic curve such that $\partial D^2 \to L_i$ 
lifts to $\tilde L_i$.
Therefore by assumption the (virtual) dimension of such 
modulie space is $\mu(\beta) + n - 3 +1$. 
Since $\mu(\beta) > 2$ by assumption 
the dimension is greater than $n$. 
Hence it lines in $H_*(L_i)$ with $* > n = \dim L_i$ and is $0$.
\end{proof}
By Proposition (\ref{prop5757}) and (\ref{form54})
the right hand side of
$$
\frak m_1\frak m_1(x) 
= \frak m_2(x,\frak m_0(1)) + \frak m_2(\frak m_0(1),x)
$$
is defined by the moduli space as in Figure \ref{zu5}.
In other words this formula is an algebraic way 
to explain the difficulty to define the 
Floer homology of a pair of immersed Lagrangian submanifolds 
which is monotone but immersed.
(We explained it in Subsection \ref{boundary} in a geometric way.)
\par
We also have the following:
\begin{prop}\label{prop5758}
In the situation of Proposition \ref{prop57} the sum appearing in the definition 
of structure operations are all finite sum and the  
construction works over the ground ring $\Z_2$ (or $\Z$ in the case when we can find orientations 
of the moduli space which is compatible at the boudnaries.).
\end{prop}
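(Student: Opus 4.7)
The plan is to use the weak monotonicity hypothesis together with Gromov compactness to show that each structure operation $\frak m_k$ involves only finitely many homotopy classes of pseudoholomorphic polygons. Both claims of the proposition then follow: finiteness of the sum is immediate, and the Novikov variable $T$ may be specialized to $1$ to yield a construction over $\Z_2$ (or over $\Z$ when coherent orientations exist).

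First I would set up the counts precisely. Each operation $\frak m_k$ on $CF(L_{i_0},L_{i_1})\widehat\otimes\cdots\widehat\otimes CF(L_{i_{k-1}},L_{i_k})$ is defined as a sum over homotopy classes $\beta$ of pseudoholomorphic $(k+1)$-gons whose boundary lifts to the $\tilde L_{i_j}$ and whose corners are the input and output generators (ordinary intersection points or switching pairs at self-intersections), weighted by $T^{\omega(\beta)}$ times the signed count of the zero-dimensional part of $\mathcal{M}_k(\beta;x_0,\dots,x_k)$. The virtual dimension of that moduli space has the shape $\mu(\beta)+D(x_0,\dots,x_k)+(k-2)$, where $D$ depends only on the discrete corner data (the intersection points together with their switching pattern). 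Fixing the algebraic degree of $\frak m_k$ and the inputs $x_i$ therefore forces $\mu(\beta)$ to equal a single integer, independent of the modulus of the curve.

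Next I would invoke monotonicity: by Definition \ref{defn4111}, any pseudoholomorphic polygon satisfies $\mu(\beta)=c\,\omega(\beta)$, so once $\mu(\beta)$ is determined the energy $\omega(\beta)$ is determined as well. Gromov compactness for pseudoholomorphic polygons with immersed Lagrangian boundary, in the framework of \cite{AJ}, then implies that only finitely many classes at that energy admit a nonempty moduli space. With finiteness in hand, specializing $T\mapsto 1$ produces honest $\Z_2$-linear structure operations; the $A_\infty$-relations (\ref{formula25}) survive this specialization because they encode identifications of codimension-one strata of $1$-dimensional moduli spaces, which are themselves finite unions of products of $0$-dimensional ones. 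Lifting to $\Z$ coefficients is then the standard matter of a coherent orientation of all these moduli spaces compatible with the boundary gluings, as in the embedded monotone case.

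The hard part will be the dimension formula in the immersed setting. Each corner may be either a transverse intersection between distinct $L_i,L_j$ or a switching pair $(p,q)$ at a self-intersection of a single $L_i$, and the two orderings $(p,q)$ and $(q,p)$ contribute asymmetrically to the Maslov and Fredholm indices. Verifying that $D(x_0,\dots,x_k)$ is a well-defined function of the discrete data, so that fixing $x_0,\dots,x_k$ together with the algebraic degree really does pin $\mu(\beta)$ down uniquely, requires the careful bookkeeping carried out in \cite{AJ}; I would invoke that framework directly rather than redo it. Once the index computation is in place, the finiteness and descent to $\Z_2$ are essentially formal, mirroring Oh's argument for Theorem \ref{monotonecaseoh}.
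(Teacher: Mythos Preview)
Your proposal is correct and follows the route the paper itself indicates: its proof consists only of the two sentences pointing to Proposition~\ref{prop5757} (for the dimension/monotonicity/compactness argument giving finiteness) and to \cite{fooo:overZ} (for the passage to $\Z_2$ or $\Z$ coefficients via coherent orientations). Your write-up is a faithful unpacking of exactly those two references, and your identification of the Akaho--Joyce index bookkeeping at switching corners as the one nontrivial ingredient is accurate.
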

The proof of the first half is similar to the proof of Proposition 
\ref{prop5757}. The proof of the second half is similar to 
\cite{fooo:overZ}.

\section{Instanton Floer homology for 3 manifolds with boundary}
\label{boundary}

\subsection{Main theorems}
\label{mainthm}

The next two theorems are the main results explained in this article.
We assume the following for the simplicity of 
the statement.
Let $M$ be a 3 manifold with boundary $\Sigma = \partial M$
and let $\mathcal P_M$ be a principal $SO(3)$ bundle.
We denote $\mathcal P_{\Sigma}$ the restriction of 
$\mathcal P_M$ to $\Sigma$. We assume $w^2(\mathcal P_{\Sigma}) 
= [\Sigma]$. (Assumption \ref{assumption11} (1).)

\begin{assump}\label{ass61}
We assume Assumption \ref{assum122}.
Moreover we assume that the restriction map defines an immersion
$R(M;\mathcal P_M) \to R(\Sigma;\mathcal P_{\Sigma})$.
\end{assump}
We remark that we can reduce the general case to the case when this assumption 
is satisfied by perturbing the equation  $F_a = 0$ in the same way as 
\cite{Don2, fl1, He}.

\begin{thm}\label{them61}
We assume Assumption \ref{ass61}. Then
the object $R(M;\mathcal P_M)$ is unobstructed.
Namely there exists a bounding cochain $b_M$ 
in $CF(R(M;\mathcal P_M))$.
\par
Moreover we can find $b_M$ in a canonical way.
In other words, its gauge equivalence class is an invariant 
of $(M;\mathcal P_M)$.
\end{thm}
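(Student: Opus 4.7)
The plan is to define $b_M$ geometrically using three-dimensional gauge theory on $M$. Under Assumption~\ref{ass61}, the self-intersections of the immersion $i\colon R(M;\mathcal P_M)\to R(\Sigma;\mathcal P_\Sigma)$ are in bijection with ordered pairs $([a],[b])$ of distinct classes in $R(M;\mathcal P_M)$ such that $i([a])=i([b])$, and these index the switching generators of $CF(R(M;\mathcal P_M),R(M;\mathcal P_M))$. For each such pair I consider the moduli space $\mathcal M(M\times\mathbb R;a,b)$ of gauge equivalence classes of ASD connections on $\mathcal P_M\times\mathbb R$ with finite $L^2$-curvature and limits $a,b$ as in Subsection~\ref{subsec:Floer homology}, except now both endpoints lie in $R(M;\mathcal P_M)$ rather than $R_0(M;\mathcal P_M)$. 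Setting $n([a],[b])$ equal to the count (modulo $2$, or over $\mathbb Z$ once orientations are fixed) of its zero-dimensional components and weighting by $T^{E([a],[b])}$ where $E$ is the Yang--Mills energy, I define
$$
b_M \;=\; \sum_{([a],[b])} n\bigl([a],[b]\bigr)\, T^{E([a],[b])}\,\bigl([a],[b]\bigr) \;\in\; CF^1\bigl(R(M;\mathcal P_M),R(M;\mathcal P_M)\bigr).
$$
The index formula matching the ASD index on $M\times\mathbb R$ with the Maslov index of the corresponding cylinder in $R(\Sigma;\mathcal P_\Sigma)$ (as in Proposition~\ref{prop46}) shows that $b_M$ has degree $1$, and positivity of the energy forces $b_M\equiv 0\pmod{\Lambda_+}$.

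The core step is to verify $\sum_{k\geq 0}\frak m_k(b_M,\dots,b_M)=0$. The strategy is to analyse the boundary of one-dimensional hybrid moduli spaces that couple (i) one-dimensional components $\mathcal M(M\times\mathbb R;a,b;1)$ of instantons on $M\times\mathbb R$ inserted at each switching vertex with (ii) the pseudoholomorphic polygons in $R(\Sigma;\mathcal P_\Sigma)$ with boundary on $R(M;\mathcal P_M)$ used to define the structure maps $\frak m_k$ in the Akaho--Joyce framework of Subsection~\ref{immersed}. The boundary decomposes into: breakings of an instanton strip, giving the $\frak m_2$-contributions via the products $n([a],[b])\,n([b],[c])$; polygon degenerations in $R(\Sigma;\mathcal P_\Sigma)$, giving $\frak m_k(b_M,\dots,b_M)$ for $k\geq 2$; and pseudoholomorphic disk bubbles on $R(M;\mathcal P_M)$, which contribute the $\frak m_0(1)$-term. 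By Proposition~\ref{prop5757} the last term lies entirely in the switching part, so it is compatible with the rest, and summing all boundary contributions yields exactly the Maurer--Cartan equation.

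For canonicity, given two choices of auxiliary data (metric on $M$, perturbation of the equation $F_a=0$ ensuring Assumption~\ref{ass61}, and compatible almost complex structure on $R(\Sigma;\mathcal P_\Sigma)$) producing $b_M^0$ and $b_M^1$, I introduce a parametrized hybrid moduli space over $[0,1]$ with interpolating data. The counts of its zero- and one-dimensional strata assemble into a pseudo-isotopy in the sense of \cite{fooobook} between the associated filtered $A_\infty$-structures together with the Maurer--Cartan elements, producing a gauge equivalence $b_M^0\sim b_M^1$. Hence $[b_M]\in\mathcal M(R(M;\mathcal P_M))$ is an invariant of $(M,\mathcal P_M)$.

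The main obstacle is the compactness and gluing analysis for the hybrid moduli spaces in the second step, in particular matching polygon breakings in $R(\Sigma;\mathcal P_\Sigma)$ with instanton breakings on $M\times\mathbb R$. Historically this required a delicate adiabatic limit (shrinking the normal $M$-direction at the boundary), whose implementation is technically severe when $R(\Sigma;\mathcal P_\Sigma)$ is singular. Following the philosophy emphasised in the Introduction, the plan is to sidestep the adiabatic limit entirely: I use $R(M;\mathcal P_M)$ as the test object for the functor $L\mapsto CF((M;\mathcal P_M);L)$ of Theorem~\ref{thm612}, extract the bounding cochain from the associated $A_\infty$-module structure, and verify the Maurer--Cartan equation purely through cobordism counts and homological algebra inside $\frak{Fuk}(R(\Sigma;\mathcal P_\Sigma))$.
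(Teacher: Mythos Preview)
Your primary plan---defining $b_M$ by directly counting ASD connections on $M\times\R$ with asymptotics at self-intersection pairs $([a],[b])$---is precisely the content of the Conjecture in Subsection~\ref{analysisbounding}, which the paper explicitly says the author is \emph{unable to prove}. Two concrete problems arise. First, the moduli space $\mathcal M(M\times\R;a,b)$ of Subsection~\ref{subsec:Floer homology} is defined for \emph{closed} $M$; when $\partial M=\Sigma\ne\emptyset$ the domain $M\times\R$ has boundary $\Sigma\times\R$ and the ASD equation is not elliptic without a boundary condition. You must either equip $M\setminus\partial M$ with a cylindrical end $\Sigma\times(0,\infty)$ (as in the Conjecture) or impose a Lagrangian boundary condition on $\Sigma\times\R$ (as in the actual proof), and you do neither. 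Second, and more seriously, the ``hybrid moduli spaces'' you invoke to prove the Maurer--Cartan equation would require exactly the adiabatic-limit analysis you yourself flag as the obstacle; there is at present no way to match instanton breakings on $M\times\R$ (cylindrical-end metric) with polygon degenerations in $R(\Sigma;\mathcal P_\Sigma)$.

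The paper's actual argument, sketched in Subsections~\ref{cyclic} and~\ref{exist}, is structurally different. One sets $L=R(M;\mathcal P_M)$ and builds (Theorem~\ref{thm612}) a right filtered $A_\infty$-module structure on $CF((M;\mathcal P_M);L)$ over $CF(L)$, using ASD connections on $M\times\R$ with the Lagrangian boundary condition $L$ along $\Sigma\times\R$. One then shows (Proposition~\ref{prop613}) that the fundamental class ${\bf 1}$ of the diagonal component $R(M;\mathcal P_M)\subset R(M;\mathcal P_M)\times_{R(\Sigma;\mathcal P_\Sigma)}R(M;\mathcal P_M)$ is a cyclic element of this module. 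Proposition~\ref{prop611} is then a purely algebraic lemma: a $G$-gapped right module with a cyclic element over a (possibly curved) $A_\infty$ algebra admits a \emph{unique} $b$ solving $d^b({\bf 1})=0$, and this $b$ automatically satisfies the Maurer--Cartan equation. Thus $b_M$ is \emph{defined} by solving $d^b({\bf 1})=0$ inductively on the energy filtration, not by instanton counts, and the Maurer--Cartan relation is a formal consequence of the module axioms rather than of a hybrid cobordism. Your final paragraph gestures at this mechanism but misframes it as a tool to \emph{verify} your geometrically defined $b_M$; in fact it replaces that definition entirely. Canonicity follows from the uniqueness clause in Proposition~\ref{prop611} together with invariance of the module structure under change of auxiliary data.
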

\begin{rem}
\begin{enumerate}
\item
We omit the definition of gauge equivalence between 
bounding cochains. See \cite[Definition 4.3.1]{fooobook}.
\item
We can remove Assumption \ref{ass61} in Theorem \ref{them61} 
by perturbing the equation $F_a = 0$ (which defines 
$R(M;\mathcal P_M)$) on a compact subset of $M$.
The same remark applies to the next theorem.
\item
We can also prove that $b_M$ is supported in the 
switching components.
\end{enumerate}
\end{rem}
\begin{thm}\label{thm64}
Suppose $(M_1,\mathcal P_{M_1})$ and $(M_2,\mathcal P_{M_2})$
satisfy Assumption \ref{assum122}.
We also assume
$$
\partial(M_1,\mathcal P_{M_1}) = (\Sigma,\mathcal P_{\Sigma}),
\qquad
\partial(M_2,\mathcal P_{M_2}) = (-\Sigma,\mathcal P_{\Sigma}).
$$
We glue $(M_1,\mathcal P_{M_1})$ and $(M_2,\mathcal P_{M_2})$
along their boundaries to obtain
$(M,\mathcal P_M)$.
\par
Then we have an isomorphism:
\begin{equation}\label{iso61}
HF((R(M_1;\mathcal P_{M_1}),b_{M_1}),
(R(M_2;\mathcal P_{M_2}),b_{M_2}))
\cong
HF(M,\mathcal P_{M}) \otimes \Lambda_0^{\Z_2}.
\end{equation}
\end{thm}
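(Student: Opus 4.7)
The plan is to combine a neck-stretching degeneration of the $4$-dimensional ASD moduli space with an $A_\infty$-module interpretation of the relative invariant, following the cobordism strategy of \cite{LL, fu7, Li2}, and deforming everything by the canonical bounding cochains $b_{M_1}, b_{M_2}$ furnished by Theorem \ref{them61}.

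First I would attach to each unobstructed pair $(L, b_L)$ in $\frak{Fuk}(R(\Sigma;\mathcal{P}_\Sigma))$ a chain complex $CF((M_1;\mathcal{P}_{M_1}); L, b_L)$, defined by counting ASD connections on $M_1$ with cylindrical end $\Sigma \times [0,\infty)$ whose asymptotic flat connection on $\Sigma$ represents a point of $L$. The differential mixes ASD connections on the half-cylinder with pseudoholomorphic strips in $R(\Sigma;\mathcal{P}_\Sigma)$ having one boundary on $L$, and the insertion of $b_L$ along that boundary cancels the weakly-monotone disk bubbles exactly as in Subsection \ref{subsec:weakmonotone} and Subsection \ref{Ainfinitycate}. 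Organizing over multilinear inputs on $L$ promotes this to a right $A_\infty$-module $\mathcal{F}_{M_1}$ over $\frak{Fuk}(R(\Sigma;\mathcal{P}_\Sigma))$; symmetrically one obtains a left $A_\infty$-module $\mathcal{F}_{M_2}$ from $M_2$.

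Second, I would stretch the neck of $M = M_1 \cup_\Sigma M_2$ at $\Sigma$: as the neck length tends to infinity, a sequence of ASD connections on $M$ breaks into an ASD solution on $M_1$, an ASD solution on $M_2$, and a (possibly broken) pseudoholomorphic strip in $R(\Sigma;\mathcal{P}_\Sigma)$ connecting their boundary limits. Bubbled disks on the immersed Lagrangians $R(M_i;\mathcal{P}_{M_i})$ are absorbed into the deformation by $b_{M_i}$, producing a chain equivalence
\begin{equation*}
CF(M;\mathcal{P}_M) \otimes \Lambda_0^{\Z_2} \simeq \mathcal{F}_{M_1}(R(M_2;\mathcal{P}_{M_2}), b_{M_2}).
\end{equation*}
Third, I would show that $\mathcal{F}_{M_1}$ is represented in the deformed Fukaya category by $(R(M_1;\mathcal{P}_{M_1}), b_{M_1})$, i.e.\ there is a quasi-isomorphism of right $A_\infty$-modules
\begin{equation*}
\mathcal{F}_{M_1}(-) \simeq CF\bigl((R(M_1;\mathcal{P}_{M_1}), b_{M_1}), (-, b_-)\bigr).
\end{equation*}
Following the idea emphasized in the remark after Theorem \ref{embedcase}, I would test this on $L = R(M_1;\mathcal{P}_{M_1})$, where both sides should compute the de Rham cohomology of $R(M_1;\mathcal{P}_{M_1})$: the left-hand side by an adiabatic shrinking of $M_1$ onto its flat moduli space, and the right-hand side by the immersed weakly-monotone analog of Theorem \ref{FloerLag}(5). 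The tautological unit class there yields a canonical closed natural transformation, and its compatibility with the canonical $b_{M_1}$ of Theorem \ref{them61} upgrades it to a quasi-isomorphism of $A_\infty$-modules via Whitehead's theorem. Evaluating at $(R(M_2;\mathcal{P}_{M_2}), b_{M_2})$ and combining with Step 2 yields (\ref{iso61}).

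The main technical obstacle is Step 2: executing the neck-stretching/gluing rigorously when the Lagrangians $R(M_i;\mathcal{P}_{M_i})$ are only immersed and only monotone in the weak sense of Definition \ref{defn4111}. Every boundary component of the broken ASD moduli space coming from disk bubbles or from the switching self-intersection points of $R(M_i;\mathcal{P}_{M_i})$ must be matched with a term in the $A_\infty$-deformation by $(b_{M_1}, b_{M_2})$, and one must exclude any further degeneration unaccounted for in this picture. This is precisely the regime where the cobordism-style bookkeeping of \cite{LL, fu7} replaces the earlier adiabatic-limit analysis, and it is the only place where substantial new analysis (beyond Theorem \ref{them61}) is required.
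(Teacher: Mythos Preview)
Your overall architecture (build an $A_\infty$-module from $M_1$, then identify it with the Yoneda module of $R(M_1;\mathcal P_{M_1})$) is in the right spirit, but Step~2 is a genuine gap, and it is not the step that the cobordism methods you cite actually address.

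In Step~2 you stretch the neck of the $3$-manifold $M$ at $\Sigma$ and assert that ASD connections on $M\times\R$ break into ASD pieces on $M_i\times\R$ together with a pseudoholomorphic strip in $R(\Sigma;\mathcal P_\Sigma)$. That is precisely the adiabatic-limit picture underlying the Atiyah--Floer conjecture, and it is exactly the kind of analysis that the cobordism approach of \cite{LL,fu7} is designed to \emph{replace}, not to carry out. The references you invoke do not establish this degeneration; they sidestep it. Your final paragraph therefore misdiagnoses the obstacle: the ``cobordism-style bookkeeping'' does not make the neck-stretching work---it makes it unnecessary. A related issue appears already in Step~1, where you use $M_1$ with cylindrical end $\Sigma\times[0,\infty)$; this is the model of the conjecture in Subsection~\ref{analysisbounding}, not the one actually used to build the right $A_\infty$-module in Theorem~\ref{thm612}, which takes $M_1$ compact with boundary and imposes a Lagrangian boundary condition on $\partial M_1\times\R_\tau$.

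The paper's proof is quite different in mechanism. It builds a single $4$-manifold $X$ (a gauge-theory analogue of the Lekili--Lipyanskiy $Y$-diagram) by gluing $M_1\times\R_\tau$ and $M_2\times\R_\tau$ to $\Sigma\times W$ for a planar domain $W$; $X$ has boundary $\Sigma\times\R_\tau$ carrying Lagrangian boundary conditions $R(M_1;\mathcal P_{M_1})$ for $\tau<0$ and $R(M_2;\mathcal P_{M_2})$ for $\tau>0$, and three ends isometric to $M_1\times(-\infty,0]$, $M\times[0,\infty)$, $M_2\times[0,\infty)$. Counting ASD connections on $X$ with the switching point $\alpha$ at $(0,0)$ and asymptotic to the cyclic element ${\bf 1}$ at the $M_i$-ends defines a map $\mathscr F:CF((R(M_1),b_{M_1}),(R(M_2),b_{M_2}))\to CF(M;\mathcal P_M)$ directly. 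The codimension-one boundary of this moduli space has six pieces: disk bubbles along $\tau<0$ and $\tau>0$ are cancelled by $b_{M_1},b_{M_2}$; the sliding ends at $\tau\to\pm\infty$ vanish because $d^{b_{M_i}}{\bf 1}=0$ (this is where the cyclic-element characterization of $b_{M_i}$ from Proposition~\ref{prop611} enters); the remaining two ends at $(0,0)$ and at $t\to\infty$ give exactly the two differentials, so $\mathscr F$ is a chain map. Finally, the energy-zero part of $\mathscr F$ is the identity under $R(M_1)\times_{R(\Sigma)}R(M_2)\cong R(M)$, and an energy-filtration argument upgrades this to a chain-level isomorphism. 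No neck-stretching and no separate representability step are needed.
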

Note the Lagrangian Floer homology in the left hand side has 
$\Lambda_0^{\Z_2}$ coefficient.
However actually it can be defined over $\Z_2$ coefficient.
In fact we have:
\begin{prop}
In the situation of Theorem \ref{them61}
we may choose the bounding cochain $b_M$ so that it is 
entirely in the switching part.
\end{prop}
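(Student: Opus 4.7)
The plan is to construct $b_M$ as a bounding cochain lying entirely in the switching part via an obstruction-theoretic lift, using the fact that the switching subspace is an $A_\infty$-ideal in $CF(L,L)$ where $L = R(M;\mathcal P_M)$. Write $CF(L,L) = H(L;\Lambda_0) \oplus CF^{\mathrm{sw}}(L,L)$ as in (\ref{immFl}), and let $\pi : CF(L,L) \to H(L;\Lambda_0)$ denote the projection onto the diagonal part.

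The key step is a closure property generalizing Proposition \ref{prop5757}: for any switching generators $x_1,\dots,x_k$, the diagonal projection $\pi(\frak m_k(x_1,\dots,x_k))$ vanishes. Geometrically, $\pi(\frak m_k(x_1,\dots,x_k))$ is represented by the evaluation at the output marked point of the moduli space of pseudoholomorphic $(k+1)$-gons in class $\beta$ with boundary lifting continuously to $\tilde L$ at the output and prescribed switchings at the input marked points. Using weak monotonicity with minimal Maslov number $\mu(\beta)\ge 4$ (and divisible by $4$), a dimension count parallel to the $k=0$ case of Proposition \ref{prop5757} shows that the virtual dimension of the relevant evaluation chain in $L$ exceeds $n=\dim L$, hence represents zero in $H^*(L;\Lambda_0)$. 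The monotonicity relation $\mu([u])=c\int u^*\omega$ of Definition \ref{defn4111} ensures a positive dimension excess for every nonconstant disc class, while constant discs contribute only to the unit. This closure property shows that $CF^{\mathrm{sw}}(L,L)$ is a two-sided $A_\infty$-ideal and $\pi$ is a strict $A_\infty$-morphism onto the quotient $A_\infty$-structure on $H(L;\Lambda_0)$.

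Now apply $\pi$ to the canonical bounding cochain $b_M$ provided by Theorem \ref{them61}. Since $\pi$ is an $A_\infty$-morphism, $\pi(b_M)$ is a bounding cochain in the quotient $A_\infty$-algebra on $H(L;\Lambda_0)$. On the other hand, Proposition \ref{prop5757} says $\pi(\frak m_0(1))=0$, so the quotient $A_\infty$-algebra has vanishing curvature and $b=0$ is a bounding cochain. Moreover the closure property implies $\pi \circ \frak m_k = 0$ on $CF^{\mathrm{sw}}(L,L)^{\otimes k}$ for inputs chosen from the switching part at least once, so the quotient $A_\infty$-structure on $H(L;\Lambda_0)$ inherits only those discs whose boundary lifts continuously everywhere, and again by the same dimension count these vanish. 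Thus the quotient $A_\infty$-algebra on $H(L;\Lambda_0)$ is, in fact, trivial at the level of curvature and $\frak m_k$-operations on positively filtered elements, which forces $\pi(b_M)$ to be gauge equivalent to $0$ by the standard obstruction theory of \cite[Section 4.3]{fooobook}.

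Using this $A_\infty$ gauge transformation on $CF(L,L)$ — which preserves the Maurer--Cartan gauge equivalence class of $b_M$ — we may replace $b_M$ by a gauge equivalent bounding cochain whose image under $\pi$ vanishes identically; equivalently, the new $b_M$ lies entirely in $CF^{\mathrm{sw}}(L,L)$. Since Theorem \ref{them61} only specifies $b_M$ up to gauge equivalence, this is the desired canonical choice. The main obstacle is Step~1: proving the dimension bound for $\pi(\frak m_k(x_1,\dots,x_k))$ with $k \ge 1$ switching inputs requires careful accounting of the Maslov contributions at each switching corner and of the degrees $\deg(x_i)$ of the switching generators, so as to confirm that the virtual dimension formula $\mu(\beta)+n+k-2-\sum_i\deg(x_i)$ exceeds $n$ whenever $\beta$ is nonconstant and the evaluation lands in the diagonal part; the $\mu\equiv 0 \pmod 4$ and strict inequality $\mu>2$ provide the required slack, exactly as in the proof of Proposition \ref{prop5757}.
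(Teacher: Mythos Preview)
Your central claim---that the switching part $CF^{\mathrm{sw}}(L,L)$ is an $A_\infty$ ideal, so that the projection $\pi$ to the diagonal part is a strict $A_\infty$ morphism---is false in general, and the dimension argument you sketch cannot rescue it. The case $k=0$ in Proposition \ref{prop5757} works because there are no inputs: the output degree is forced to exceed $n$ by $\mu(\beta)>2$ alone. For $k\ge 1$ the output degree is $\sum_i \deg x_i + 2 - k$, and the degrees of switching generators $(p,q)$ are not constrained by monotonicity; they can take any value compatible with landing in $H^*(\tilde L)$. Concretely, if $\tilde L=\tilde L_1\sqcup \tilde L_2$ with $i_L$ embedding each piece, then $\frak m_2$ restricted to switching inputs contains the Floer product $CF(L_1,L_2)\otimes CF(L_2,L_1)\to H^*(L_1;\Lambda_0)$, which is the ordinary pairing and is typically nonzero. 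So $\pi\circ\frak m_2$ does not vanish on $CF^{\mathrm{sw}}\otimes CF^{\mathrm{sw}}$, and your quotient $A_\infty$ algebra does not exist. The subsequent steps (that the quotient structure is ``trivial'' and that $\pi(b_M)$ is gauge equivalent to $0$) therefore collapse; even if a quotient existed, its Maurer--Cartan moduli space would generally not be a point.

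The paper's route is entirely different and does not pass through any ideal property of $CF(L,L)$. The bounding cochain $b_M$ is produced as the \emph{unique} solution of $d^b({\bf 1})=0$ in Proposition \ref{prop611}, where $D=CF((M;\mathcal P_M);R(M;\mathcal P_M))$ is the right $A_\infty$ module built from ASD connections on $M\times\R$ with Lagrangian boundary condition (Theorem \ref{thm612}), and ${\bf 1}$ is the diagonal class (Proposition \ref{prop613}). The inductive step determines the energy-$E$ piece $b_E$ by inverting the energy-zero part of $\frak n_1({\bf 1},\cdot)$ applied to a combination of $\frak n_k({\bf 1};b_{<E},\dots,b_{<E})$. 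The point is that these module operations $\frak n_k$ are defined by gauge-theoretic moduli spaces, not by pseudoholomorphic polygons in $R(\Sigma;\mathcal P_\Sigma)$; the relevant index computation is on $M\times\R$ and the diagonal/switching splitting of $D$ matches that of $CF(L,L)$ under $\frak n_1({\bf 1},\cdot)$. One then checks, energy level by energy level, that the right-hand side lies in the switching component of $D$, so the unique $b_E$ lies in the switching component of $CF(L,L)$. In short: the result is a feature of the specific construction via the cyclic element in the ASD module, not an algebraic fact about the immersed Fukaya algebra.
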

Then using Proposition \ref{prop5758} we can define 
Floer homology 
$$
HF((R(M_1;\mathcal P_{M_1}),b_{M_1}),
(R(M_2;\mathcal P_{M_2}),b_{M_2}))
$$ 
over $\Z_2$ coefficient and  (\ref{iso61}) holds over $\Z_2$ 
coefficient.

Theorems \ref{them61} and \ref{thm64} will be proved in a 
forthcoming joint paper with Alikebar Daemi.

\subsection{A possible way to obtain bounding cochain $b_M$ directly 
from moduli spaces of ASD-connections}
\label{analysisbounding}

In this subsection we explain a conjecture which provides a way 
to obtain the bounding cochain $b_M$ in Theorem \ref{them61},
directly by counting the order of certain moduli space of 
ASD-connections.
The author is unable to prove this conjecture, which looks 
rather hard. 
The proof of Theorem \ref{them61} is performed in a different way
as we will sketch in Subsections \ref{cyclic} and \ref{exist}.
\par
Let $(M,\mathcal P_M)$ be as in Theorem \ref{them61}.
We consider the interior of $M$, that is $M \setminus \partial M$.
We write it $M$ for simplicity of notation in this subsection.
We choose its Riemannian metric such that 
$M$ minus a compact set is isometric to the direct product
$\Sigma \times (0,\infty)_t$. We use $t$ as the coordinate of 
$(0,\infty)_t$. We put the suffix  $t$ to clarify this point.
We take direct product
$M \times \R_{\tau}$ and use $\tau$ as the coordinate of $\R$ factor
and consider a principal $SO(3)$ bundle $\mathcal P_M \times \R_{\tau}$.

\begin{defn}
We consider the set of all connections $A$ of 
$\mathcal P_M \times \R_{\tau}$ with the following properties.
\begin{enumerate}
\item
$F_{A}^+ = 0$. Namely $A$ is an Anti-Self-Dual connection.
\item
We have
\begin{equation}\label{form62}
\int_{M\times \R_{\tau}} \Vert F_{A}\Vert^2\,\, {\rm Vol}_M d\tau
= E < \infty.
\end{equation}
\end{enumerate}
We denote by $\widetilde{\mathcal M}(M \times \R_{\tau};\mathcal P_M \times \R_{\tau};E)$
the set of all gauge equivalence classes of such $A$.
(Here $E$ is as in (\ref{form62}).)
\par
We can define an $\R_{\tau}$ action on 
$\widetilde{\mathcal M}(M \times \R_{\tau};\mathcal P_M \times \R_{\tau};E)$
by the translation of $\R_{\tau}$ direction.
We denote by
${\mathcal M}(M \times \R_{\tau};\mathcal P_M \times \R_{\tau};E)$
the quotient space of this $\R$ action.
\end{defn}

\begin{conj}
We assume Assumption \ref{ass61}.
Moreover we assume that $R(M;\mathcal P_M)$ has transversal selfintersection.
Then we can find an $\R_{\tau}$ invariant perturbation supported on a compact subset
of $M$ 
so that the following holds.
\begin{enumerate}
\item
$\mathcal M(M \times \R_{\tau};\mathcal P_M \times \R_{\tau};E)$
becomes a finite dimensional manifold.
\item
We can compactify it so that the singularity of the compactification 
has codimension $4$.
\item
Any element of $\mathcal M(M \times \R_{\tau};\mathcal P_M \times \R_{\tau};E)$ is 
gauge equivalent to a connection $A$ with the following properties.
\begin{enumerate}
\item There exist $[a],[b] \in R(M;\mathcal P_M)$ such that
$$
\lim_{\tau \to -\infty} A\vert_{M\times \{\tau\}} =a,
\qquad
\lim_{\tau \to +\infty} A\vert_{M\times \{\tau\}} =b.
$$
\item
There exists $[\alpha] \in R(\Sigma;\mathcal P_{\Sigma})$
such that the following holds for any $\tau$.
$$
\lim_{t\to \infty} A\vert_{\Sigma\times \{(t,\tau)\}} = \alpha.
$$
Note we require $\alpha$ to be independent of $\tau$.
\item
In particular the restrction of $a$ and $b$ to $\Sigma$ 
are both gauge equivalent to $\alpha$.
\end{enumerate}
\item
If the dimension of $\mathcal M(M \times \R_{\tau};\mathcal P_M \times \R_{\tau};E)$
is zero then $[a] \ne [b]$ in item (3).
\item
We put 
${\rm Switch} = \{([a],[b]) \mid [a] \ne [b],\,\,
[a\vert_{\Sigma}] = [b\vert_{\Sigma}]\}$.
For each $([a],[b]) \in {\rm Switch}$ let 
$c([a],[b])$ be the number of elements $[A]$
as in item (3) such that it is in the zero dimensional 
component. Then the sum
$$
\sum_{([a],[b]) \in {\rm Switch}}c([a],[b]) ([a],[b])
\in CF(R(M;\mathcal P_M),R(M;\mathcal P_M))
$$
is the bounding cochain $b_M$  in Theorem \ref{them61}.
\end{enumerate}
\end{conj}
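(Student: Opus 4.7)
The plan is to attack this in three stages: analytic setup and the end-analysis on the three-ended manifold $M\times\mathbb{R}_\tau$; transversality and compactification of $\mathcal{M}(M\times\mathbb{R}_\tau;\mathcal{P}_M\times\mathbb{R}_\tau;E)$; and verification that the proposed count satisfies the Maurer--Cartan equation defining a bounding cochain.

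First, I would set up the moduli problem on exponentially weighted Sobolev spaces that encode decay in both the $\tau$ and $t$ directions. The behavior at $\tau\to\pm\infty$ is controlled by standard Floer-theoretic machinery: finite energy and Assumption \ref{assum122} force exponential convergence to flat connections $a,b\in R(M;\mathcal{P}_M)$ as in (IF.1)--(IF.3). The delicate direction is $t\to\infty$ along the cylindrical end $\Sigma\times(0,\infty)_t$. Using the $\mathbb{R}_\tau$-invariance of the perturbation, I would show that the curvature density $\|F_A\|^2$ on slabs $\Sigma\times[T,T+1]\times\mathbb{R}_\tau$ is integrable in $T$ (from (\ref{form62})), which forces $A|_{\Sigma\times\{t\}\times\mathbb{R}_\tau}$ to approach a flat connection on $\Sigma$. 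A monotonicity argument on the Chern--Simons functional in the $t$-direction, combined with Uhlenbeck gauge fixing, would then promote this to a $\tau$-independent limit $\alpha\in R(\Sigma;\mathcal{P}_\Sigma)$; $\tau$-independence is forced because any $\tau$-dependence of $\alpha$ would contribute an infinite amount of energy after integration over the infinite $\tau$-axis. Since $a|_\Sigma$ and $b|_\Sigma$ must both equal $\alpha$ in $R(\Sigma;\mathcal{P}_\Sigma)$, we recover item (3)(c).

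For stage two, transversality for generic $\mathbb{R}_\tau$-invariant perturbations on a compact subset of $M$ follows from a Sard--Smale argument combined with unique continuation for the ASD equation, giving item (1). The virtual dimension of $\mathcal{M}(M\times\mathbb{R}_\tau;\mathcal{P}_M\times\mathbb{R}_\tau;E)$ is a Maslov-type index attached to the triple $(a,b,\alpha)$ relative to the immersed Lagrangian $R(M;\mathcal{P}_M)\hookrightarrow R(\Sigma;\mathcal{P}_\Sigma)$, which is where the $(\mathrm{mod}\,4)$ grading enters. Compactification (item (2)) combines Uhlenbeck bubbling, which is codimension $4$, with three types of neck degenerations: $\tau$-splittings producing an element of $\mathcal{M}(M\times\mathbb{R};c,d)$, $t$-stretching producing a pseudoholomorphic disk in $R(\Sigma;\mathcal{P}_\Sigma)$ with boundary on $R(M;\mathcal{P}_M)$, and combined corner degenerations. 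Item (4), namely $[a]\neq[b]$ in dimension zero, follows because the translation $\mathbb{R}_\tau$ acts freely on any nonconstant connection, while $[a]=[b]$ would force $A$ to be $\tau$-independent (hence a fixed point) and yield a solution of negative virtual dimension after quotienting.

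For stage three, I would show that the boundary of the $1$-dimensional component of $\mathcal{M}(M\times\mathbb{R}_\tau;\mathcal{P}_M\times\mathbb{R}_\tau;E)$ decomposes into precisely the terms appearing in $\sum_k \mathfrak{m}_k(b_M,\ldots,b_M)$: $\tau$-splittings at intermediate switching pairs contribute to $\mathfrak{m}_2(b_M,b_M)$, $t$-bubbling contributes disk terms $\mathfrak{m}_0(1)$, and combined corner breakings at multiple switching pairs produce the higher operations $\mathfrak{m}_k(b_M,\ldots,b_M)$ with inputs along the $\Sigma\times\{\infty\}\times\mathbb{R}_\tau$ corner. Oriented counting then forces the total boundary count to vanish, which is exactly (\ref{form55}).

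The hard part will be the gluing/adiabatic analysis at the corner where $t\to\infty$ meets $\tau\to\pm\infty$. Establishing the precise dictionary between $t$-direction bubbling and the immersed-Lagrangian $A_\infty$-structure on $R(M;\mathcal{P}_M)\hookrightarrow R(\Sigma;\mathcal{P}_\Sigma)$ is essentially an extension of Dostoglou--Salamon type adiabatic limits to the three-ended setting with immersed Lagrangian self-intersection, and the resulting gluing theorem has to respect the switching data at self-intersection points of $R(M;\mathcal{P}_M)$. This is precisely the analytic difficulty Fukaya alludes to when explaining why the proof of Theorem \ref{them61} is instead done via cobordism arguments and homological algebra, and it is why this conjecture remains open.
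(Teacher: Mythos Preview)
The paper contains no proof of this statement: it is explicitly labeled a \emph{Conjecture}, and immediately afterward the author writes ``This conjecture is difficult to prove'' and ``The author is unable to prove this conjecture, which looks rather hard.'' The only further remark offered is that item~(3) appears to be a gauge-theory analogue of Bottman's result on pseudoholomorphic quilts with figure-eight singularity. So there is nothing in the paper to compare your argument against.

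That said, your outline is a reasonable program for what a proof would have to contain, and you correctly isolate the crux: the adiabatic/gluing analysis at the corner where $t\to\infty$ meets $\tau\to\pm\infty$, together with the dictionary between $t$-direction bubbling and the immersed-Lagrangian $A_\infty$ structure. Your final paragraph is honest about this being the obstruction, and the paper agrees---this is exactly why Theorem~\ref{them61} is proved instead by the cyclic-element/cobordism route of Subsections~\ref{cyclic} and~\ref{exist}. One point worth flagging: your stage-three claim that the boundary of the one-dimensional moduli space decomposes into \emph{all} of the $\frak m_k(b_M,\dots,b_M)$ terms is asserted rather than argued; making that precise would require exactly the corner gluing theorem you identify as missing, so the Maurer--Cartan verification in your sketch is circular as written. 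Since the statement is open, this is not a flaw in your reasoning so much as confirmation that the real content lies where you say it does.
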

This conjecture is difficult to prove.
It seems that item (3) is a kind of 
Gauge theory analogue of a result by 
Bottman \cite{bott}.

\subsection{Right $A_{\infty}$ module and cyclic element}
\label{cyclic}

In this and the next subsections we explain a way to go around
the difficult analysis to study the moduli space 
$\mathcal M(M \times \R_{\tau};\mathcal P_M \times \R_{\tau};E)$
but use an algebraic lemma to obtain the bounding cochain $b_M$.
To state it we need a notations.

\begin{defn}
Let $(C,\{\frak m_k\})$ be a filtered $A_{\infty}$ algebra,
which may be curved.
A {\it right filtered $A_{\infty}$ module}
over $(C,\{\frak m_k\})$ is $(D,\{\frak n_k\})$
such that:
\begin{enumerate}
\item
$D$ is a graded $\Lambda_0$ module which is a completion of free $\Lambda_0$ module.
\item 
For $k=0,1,2,\dots$, 
$$
\frak n_k : D \otimes \underbrace{C[1] \otimes \cdots \otimes C[1]}_k \to D
$$
is a $\Lambda_0$ module homomorphism which preserves filtration
and has degree $1$. Here $C[1]^d = C^{d+1}$ by definition.
\item
The next relation holds for $y \in D$ and $x_1,\dots,x_k \in C$.
\begin{equation}\label{form63}
\aligned
0= &\sum_{\ell=0}^{k}  
\frak n_{k-\ell}(\frak n_{\ell}(y;x_1,\dots,x_{\ell}), x_{\ell+1},\dots,x_k))\\
&+\sum_{i=-1}^{k}\sum_{j=i}^{k} (-1)^*\frak n_{k-j+i+1}(y;x_1,\dots,\frak m(x_{i+1},\dots,x_j),
x_{j+1},\dots,x_k),
\endaligned
\end{equation}
where $* = \deg y + \deg x_1 + \dots + \deg x_i$.
\end{enumerate}
\end{defn}
Let $(D,\{\frak n_k\})$ be a right filtered $A_{\infty}$ module 
over $(C,\{\frak m_k\})$ and $b$ a bounding cochain of
$(C,\{\frak m_k\})$. 
We define
$d^b : D \to D$ by
\begin{equation}\label{form64}
d^b(y) = \sum_{k=0}^{\infty} \frak  n_k(y;b,\dots,b).
\end{equation}
It is easy to see from (\ref{form63}) that
$d^b\circ d^b = 0$.
\begin{defn}
Let $(D,\{\frak n_k\})$ be a right filtered $A_{\infty}$ module 
over $(C,\{\frak m_k\})$.
An element ${\bf 1} \in D$ is said to be a 
{\it cyclic element} if the following holds.
\begin{enumerate}
\item
The map $x \mapsto \frak n_1({\bf 1},x)$
is a $\Lambda_0$ module isomorphism: $C \to D$.
\item
$\frak n_0({\bf 1}) \equiv 0 \mod \Lambda_+$.
\end{enumerate}
\end{defn}
\begin{defn}
Let $G$ be a submonoid of $\R$ which is discrete.
\par
Let $C$ be a completion of a free $\Lambda_0$ module.
An element $x$ of $C$ is said to be 
{\it $G$-gapped} if it is of the form
$$
x = \sum_{i,j} a_{i,j} T^{\lambda_i} e_j
$$
where $a_{i,j} \in R$ (the ground ring), $\lambda_i \in G \subset \R_{\ge 0}$,
and $e_j$ is a basis of $C$.
\par
Suppose $C_i$ ($i=1,2$) are completions of free $\Lambda_0$ modules.
A filtered $\Lambda_0$ module homomorphism from $C_1$ to $C_2$ 
is said to be {\it $G$-gapped} if it sends $G$-gapped elements to 
$G$-gapped elements.
\par
A filtered $A_{\infty}$ algebra (resp. category, module) 
are said to be {\it $G$-gapped} if all of its structure operations 
are $G$-gapped.
\end{defn}
The filtered $A_{\infty}$ categories we obtain in Lagrangian 
Floer theory are always $G$-gapped for some $G$.

\begin{prop}\label{prop611}
Let $(D,\{\frak n_k\})$ be a right filtered $A_{\infty}$ module 
over $(C,\{\frak m_k\})$.
We assume that they are $G$-gappped.
Let ${\bf 1} \in D$ be a 
cyclic element, which is also $G$-gapped.
\par
Then there exists uniquely a $G$-gapped element $b$ of $C$ such that:
\begin{enumerate}
\item $b$ is a bounding cochain of $C$.
\item
\begin{equation}\label{form65}
d^b({\bf 1}) = 0.
\end{equation}
Here $d^b$ is as in (\ref{form64}).
\end{enumerate}
\end{prop}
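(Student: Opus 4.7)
The plan is to proceed in two stages: first solve (\ref{form65}) by a Picard-style iteration in the $T$-adic topology, then extract the bounding cochain condition as a free consequence of the module $A_\infty$ relation. Write $\Phi\colon C\to D$ for the $\Lambda_0$-linear map $x\mapsto\frak n_1({\bf 1};x)$, which is a $G$-gapped isomorphism by cyclicity; a Neumann-series computation shows that $\Phi^{-1}$ is also $G$-gapped. Rearranging (\ref{form65}) gives
\[\Phi(b)\;=\;-\frak n_0({\bf 1})\;-\;\sum_{k\ge 2}\frak n_k({\bf 1};b,\dots,b),\]
whose right-hand side lies in $\Lambda_+ D$ as soon as $b\in\Lambda_+ C$, since $\frak n_0({\bf 1})\in\Lambda_+D$ and every $k\ge 2$ term contributes at least $T^{2\lambda_1}$.

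Next, enumerate $G\cap\R_{>0}=\{\lambda_1<\lambda_2<\cdots\}$ and construct $b=\sum_{i\ge 1}T^{\lambda_i}b_i$ inductively so that the partial sum $b^{(n)}=\sum_{i\le n}T^{\lambda_i}b_i$ satisfies $d^{b^{(n)}}({\bf 1})\equiv 0 \mod T^{\lambda_{n+1}}$. The key observation is
\[d^{b^{(n)}}({\bf 1})\;\equiv\;d^{b^{(n-1)}}({\bf 1})+T^{\lambda_n}\Phi(b_n)\mod T^{\lambda_{n+1}},\]
because any contribution from $\frak n_k$ with $k\ge 2$ involving at least one copy of $T^{\lambda_n}b_n$ also involves at least one further $b$-factor from $b^{(n-1)}\in\Lambda_+C$, hence lies in $T^{\lambda_n+\lambda_1}D\subset T^{\lambda_{n+1}}D$ (using $\lambda_n+\lambda_1\in G$ and $\lambda_n+\lambda_1>\lambda_n$). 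Writing $T^{\lambda_n}r_n$ for the leading term of $d^{b^{(n-1)}}({\bf 1})$, the unique choice $b_n:=-\Phi^{-1}(r_n)$ cancels it. Discreteness of $G$ forces $\lim\lambda_n=\infty$, so $b=\sum T^{\lambda_i}b_i$ converges in the $T$-adic topology, is $G$-gapped, and lies in $\Lambda_+C$; uniqueness is manifest from the recipe.

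Finally, the bounding cochain condition should fall out of the module relation (\ref{form63}) evaluated at $y={\bf 1}$ and $x_1=\dots=x_k=b$, summed over $k$. After reindexing $(i,j,k)$ as $(p,m,q)=(i,j-i,k-j)$, the first (concatenation) sum collapses to $d^b\bigl(d^b({\bf 1})\bigr)$ and the second (insertion) sum becomes $\pm H(F(b))$, where $F(b):=\sum_k\frak m_k(b,\dots,b)$ and
\[H(x)\;:=\;\sum_{p,q\ge 0}\frak n_{p+q+1}({\bf 1};b^p,x,b^q).\]
Since $d^b({\bf 1})=0$ by construction, we deduce $H(F(b))=0$; but $H=\Phi+O(T^{\lambda_1})$, so $H$ is itself a filtered $\Lambda_0$-module isomorphism, forcing $F(b)=0$ and confirming that $b$ is a genuine bounding cochain.

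The hard part will be the bookkeeping in the third step: one must verify that the $(-1)^{*}$ sign in (\ref{form63}) reduces to a single global constant once $\deg_{C[1]}b=0$ (equivalently $\deg_C b=1$) is used, so that the insertion sum honestly factors through $F(b)$. The $G$-gappedness of $\Phi^{-1}$ and the convergence of the induction are comparatively routine Neumann-series and discrete-monoid arguments.
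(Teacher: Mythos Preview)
Your proposal is correct and follows essentially the same strategy as the paper's own proof: solve $d^b({\bf 1})=0$ for $b$ by induction on the energy filtration (your Picard iteration), then use the module relation to conclude from $d^b(d^b({\bf 1}))=0$ that the Maurer--Cartan expression vanishes. The paper's account is a two-sentence sketch deferring details to \cite[Proposition~3.5]{fu7}, whereas you spell out the mechanism --- in particular the role of the map $H$ and its invertibility --- explicitly; your concern about the sign bookkeeping is legitimate but routine once one notes $\deg_{C[1]}b=0$.
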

The proof is actually easy. We regard (\ref{form65}) 
as an equation for $b$. We can solve it by induction on 
energy filtration. (We use $G$-gapped-ness here so that the 
filtration is parametrized by a discrete set.)
Then using $d^b(d^b({\bf 1})) = 0$, we can show that $b$ is 
a bounding cochain.
See \cite[Proposition 3.5]{fu7} for the proof.
\par
We remark that we do not assume that $C$ is unobstructed.
Namely a priori there may not exist bounding cochain.
In other words, we can use Proposition \ref{prop611} to show the existence 
of bounding cochain. This turn out to be a useful tool 
to prove such existence.
We remark that we are unable to define Lagrangian Floer homology
unless we have some bounding cochain. So proving the  existence 
of bounding cochain is a crucial step for various applications of 
Lagrangian Floer theory.

\subsection{Existence of bounding cochain}
\label{exist}

In this subsection we show an outline of the way 
how we use Proposition \ref{prop611} to prove Theorem \ref{them61}.
\par
To put the discussion in an appropriate perspective, we consider the following situation.
Let $(M;\mathcal P_M)$ be as in Theorem \ref{them61}. 
Let $L$ be an immersed Lagrangian submanifold of 
$(\Sigma;\mathcal P_{\Sigma})$.
We assume that $\{R(M;\mathcal P_M), L\}$ is a clean 
collection in the sense of
Definition \ref{defn56}.
\par
We put
\begin{equation}\label{eq6666}
CF((M;\mathcal P_M);L)
=
C_*(R(M;\mathcal P_M) \times_{(\Sigma;\mathcal P_{\Sigma})} L)
\,\,\hat\otimes\,\,
\Lambda_0.
\end{equation}
Here $R(M;\mathcal P_M) \times_{(\Sigma;\mathcal P_{\Sigma})} L$ is the
fiber product of two immersed Lagrangian submanifolds
and is a smooth manifold by our assumption.
$C_*(R(M;\mathcal P_M) \times_{(\Sigma;\mathcal P_{\Sigma})} L)$ is 
certain chain model of the homology group of  this manifold.
$\hat\otimes$ is the completion of algebraic tensor product.
\par
Note $CF((M;\mathcal P_M);L)$ as a $\Lambda_0$ module 
is the same as the underlying $\Lambda_0$ 
module $CF(R(M;\mathcal P_M);L)$ of the chain complex 
which we use to define Floer homology $HF(R(M;\mathcal P_M);L)$.
We recall that we associated to $L$ a 
filtered  $A_{\infty}$ algebra (as in Subsection \ref{immersed}).
We write it $CF(L)$.
\begin{thm}\label{thm612}
On $CF((M;\mathcal P_M);L)$, there exists a structure of right filtered 
$A_{\infty}$ module over $CF(L)$.
\end{thm}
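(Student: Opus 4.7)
The plan is to view Theorem \ref{thm612} as an instance of the standard right module structure attached to a pair of immersed Lagrangian submanifolds, adapted to the clean-intersection setting. Under Assumption \ref{ass61}, $R(M;\mathcal P_M)$ is an immersed Lagrangian submanifold of $R(\Sigma;\mathcal P_{\Sigma})$ and $L$ is given as another. The cleanness of $\{R(M;\mathcal P_M),L\}$ makes the fiber product in (\ref{eq6666}) a smooth manifold, on whose chains the module will be built, while $CF(L)$ is the filtered $A_{\infty}$ algebra of Subsection \ref{immersed}.

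First I would define, for each $k\ge 0$, a moduli space $\mathcal M_k$ of pseudoholomorphic maps
\[
u:(D^2\setminus\{z_0,\dots,z_{k+1}\},\partial D^2)\to R(\Sigma;\mathcal P_{\Sigma})
\]
from a disk with $k+2$ boundary marked points arranged cyclically, such that the boundary arc from $z_{k+1}$ to $z_0$ lifts to $R(M;\mathcal P_M)$ while the remaining $k+1$ arcs lift to $\tilde L$, with marked points $z_1,\dots,z_k$ carrying insertions $x_i\in CF(L)$ (diagonal or switching, as in (\ref{immFl})). Evaluating at $z_0$ and $z_{k+1}$ gives a map to $R(M;\mathcal P_M)\times_{R(\Sigma;\mathcal P_{\Sigma})}L$, and cutting against chain representatives of the module inputs and outputs $y_\pm$ produces
\[
\frak n_k(y_-;x_1,\dots,x_k)=\sum_{y_+}\#\mathcal M_k(y_-;x_1,\dots,x_k;y_+)\,T^{\omega(u)}\,y_+.
\]

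Next I would verify the $A_{\infty}$ module relation (\ref{form63}) by analyzing the codimension-one boundary of a suitable compactification of $\mathcal M_k$. In the weakly monotone setting of Subsection \ref{subsec:weakmonotone}, Proposition \ref{prop5757} (and its analogue on the $R(M;\mathcal P_M)$-side) restricts the contributing degenerations to two types: a sub-disk on the $L$-boundary bubbles off carrying consecutive inputs $x_{i+1},\dots,x_j$, producing the term $\frak n_{k-j+i+1}(y;x_1,\dots,\frak m_{j-i}(x_{i+1},\dots,x_j),\dots,x_k)$; and a strip breaks along the $R(M;\mathcal P_M)$-arc, producing the term $\frak n_{k-\ell}(\frak n_\ell(y;x_1,\dots,x_\ell);x_{\ell+1},\dots,x_k)$. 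Standard sign analysis for $A_{\infty}$ operations then yields (\ref{form63}).

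The hard part will be organizing the construction uniformly in the presence of both immersed self-intersections of $L$ and $R(M;\mathcal P_M)$ (requiring switching generators as in (\ref{immFl}), so that disks are allowed to switch branches at boundary punctures) and clean rather than transversal intersection of $R(M;\mathcal P_M)$ with $L$ (requiring the chain-level evaluations at $z_0,z_{k+1}$ into the fiber product to be transverse to the chosen representatives of $y_\pm$). This demands a combination of generic compatible almost complex structures on $R(\Sigma;\mathcal P_{\Sigma})$ and abstract Kuranishi-type perturbations of the $\mathcal M_k$ in the style of \cite{fooobook2}, chosen coherently in $k$ so that the boundary strata of $\mathcal M_k$ are identified, on the nose, with fiber products of lower-arity moduli spaces. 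Once this coherence is in place, (\ref{form63}) is a direct boundary computation.
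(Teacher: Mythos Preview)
Your approach is valid for the bare existence statement but takes a genuinely different route from the paper's, and the difference matters downstream. You treat $R(M;\mathcal P_M)$ purely as an immersed Lagrangian in $R(\Sigma;\mathcal P_\Sigma)$ and build $\frak n_k$ from moduli of pseudoholomorphic polygons in $R(\Sigma;\mathcal P_\Sigma)$ --- the standard Akaho--Joyce style construction, which indeed yields a right filtered $A_\infty$ module over $CF(L)$. The paper instead constructs $\frak n_k$ from a \emph{gauge-theoretic} moduli space: ASD connections on the $4$-manifold $M\times\R_\tau$ (product metric near $\partial M$), with a Lagrangian boundary condition on $\partial M\times\R_\tau=\Sigma\times\R_\tau$ given by $L$, finite energy, asymptotics in $R(M;\mathcal P_M)\times_{R(\Sigma;\mathcal P_\Sigma)}L$ as $\tau\to\pm\infty$, and boundary marked points $\tau_1<\dots<\tau_k$ furnishing evaluations into $\tilde L\times_{R(\Sigma;\mathcal P_\Sigma)}\tilde L$; the relation (\ref{form63}) then comes from the codimension-one boundary of a moduli space of instantons, not of holomorphic curves. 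What this buys the paper is that the bounding cochain $b_M$ produced via Proposition~\ref{prop611} from \emph{this} module structure is exactly the one needed in the gluing proof (Subsection~\ref{gluing}): the sliding-end strata (bdry.4), (bdry.5) of $\mathcal M((X,\mathcal P_X);\alpha,\beta)$ are identified with the same ASD-with-Lagrangian-boundary moduli spaces defining $\frak n_k$, so $d^{b_{M_i}}{\bf 1}=0$ kills those contributions and the chain map to $CF(M;\mathcal P_M)$ closes up. Your symplectic module would still give \emph{a} bounding cochain for $R(M;\mathcal P_M)$ (so Theorem~\ref{them61} as literally stated follows), but a priori a different one, and the cobordism argument comparing Lagrangian and instanton Floer homology would not go through without further work relating the two module structures.
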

We use the next proposition together with Theorem \ref{thm612}
to prove Theorem \ref{them61}.
\par
We take $L = R(M;\mathcal P_M)$.
Then by definition
$$
CF((M;\mathcal P_M);L)
= 
C_*(R(M;\mathcal P_M) \times_{R(\Sigma;\mathcal P_{\Sigma})}R(M;\mathcal P_M)) \,\,\hat\otimes\,\, \Lambda_0.
$$
We remark 
$R(M;\mathcal P_M)$ is an open submanifold of 
$R(M;\mathcal P_M) \times_{R(\Sigma;\mathcal P_{\Sigma})}R(M;\mathcal P_M))$.
We denote by ${\bf 1} \in CF((M;\mathcal P_M);R(M;\mathcal P_M))$ 
the differential $0$ form which is $1$ on $R(M;\mathcal P_M)$
and is zero on other part.
\begin{prop}\label{prop613}
${\bf 1} \in CF((M;\mathcal P_M);R(M;\mathcal P_M))$ 
is a cyclic element of the right filtered
$A_{\infty}$ module $CF((M;\mathcal P_M);R(M;\mathcal P_M))$ 
over $CF(R(M;\mathcal P_M))$.
\end{prop}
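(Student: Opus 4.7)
The plan is to verify the two defining properties of a cyclic element from Definition 6.10 by exploiting the fact that, under Assumption 6.1, the $\Lambda_0$-modules $C = CF(R(M;\mathcal P_M))$ and $D = CF((M;\mathcal P_M); R(M;\mathcal P_M))$ admit a tautological identification. By property (2') of the Akaho--Joyce construction reviewed in Subsection 4.2, $C$ decomposes as $H(R(M;\mathcal P_M);\Lambda_0) \oplus \bigoplus_{(p,q)} \Lambda_0\,(p,q)$, the sum running over ordered pairs of distinct points with $i_L(p)=i_L(q)$. The fiber product $R(M;\mathcal P_M)\times_{R(\Sigma;\mathcal P_\Sigma)} R(M;\mathcal P_M)$ appearing in the definition of $D$ decomposes into the diagonal (canonically isomorphic to $R(M;\mathcal P_M)$) together with the isolated components $(p,q)$ coming from transversal self-intersections of the immersion $i_L$. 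This produces a canonical filtered $\Lambda_0$-linear isomorphism $\Phi: C \to D$, which acts as the identity on switching generators and as the chain-model identification on the diagonal.

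The next step is to compare $\frak n_1({\bf 1},\,\cdot\,)$ with $\Phi$ modulo $\Lambda_+$. The structure map $\frak n_k$ from Theorem 6.12 is defined by a moduli space of mixed pseudoholomorphic-plus-ASD configurations whose $T$-weight is total energy. Thus the $T^0$ contribution to $\frak n_1({\bf 1},x)$ comes only from constant configurations with the prescribed asymptotics. For $x$ a chain supported on the diagonal component, the constant configuration returns the same chain on the diagonal component of the fiber product; for $x=(p,q)$ a switching generator, the unique constant configuration compatible with ${\bf 1}$ being supported on the diagonal sends $(p,q)$ to the corresponding isolated point of the fiber product sitting in the switching part of $D$. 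Hence $\frak n_1({\bf 1},\,\cdot\,) \equiv \Phi \pmod{\Lambda_+}$. Since $C,D$ and $\frak n_1({\bf 1},\,\cdot\,)$ are $G$-gapped and $\Lambda_0$ is $T$-adically complete, writing $\frak n_1({\bf 1},\,\cdot\,)=\Phi + N$ with $N(C)\subset F^{\epsilon}D$ for some $\epsilon>0$ and forming the geometric series $\sum_{n\ge 0}(-\Phi^{-1}N)^n\circ\Phi^{-1}$ yields a two-sided inverse, establishing condition (1).

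For condition (2), $\frak n_0({\bf 1})$ counts configurations with no $C$-input and a single output paired against ${\bf 1}$. Any such configuration must carry some nontrivial pseudoholomorphic disk component or nonconstant ASD part --- the definition of $\frak n_0$ does not permit a purely constant contribution --- so every term carries strictly positive energy, giving $\frak n_0({\bf 1})\in \Lambda_+\cdot D$. Together with (1) this verifies that ${\bf 1}$ is a cyclic element.

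The main obstacle is the $T^0$ identification at switching generators: one must unpack exactly which moduli space defines $\frak n_1$ in Theorem 6.12 and check that, at a self-intersection point of $i_L$, the constant configuration with boundary data prescribed by $(p,q)$ on one side and by the diagonal class ${\bf 1}$ on the other side contributes precisely the generator $(p,q)$ of $D$ with the correct sign. This is a careful orientation and boundary-switching bookkeeping in the Akaho--Joyce formalism; once it is done, the remaining algebra is the routine successive-approximation argument used throughout filtered $A_{\infty}$-theory.
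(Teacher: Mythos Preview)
The paper does not actually prove Proposition 6.13: it is stated and then immediately used to deduce Theorem 6.1, with the details deferred to the forthcoming joint paper with Daemi. So there is no ``paper's own proof'' to compare against. That said, your approach is exactly the natural one, and is implicit in how the paper uses Proposition 6.11: since the underlying $\Lambda_0$-modules of $C$ and $D$ are tautologically identified (both are built from the same fiber product $R(M;\mathcal P_M)\times_{R(\Sigma;\mathcal P_\Sigma)}R(M;\mathcal P_M)$), one checks that $\frak n_1({\bf 1},\,\cdot\,)$ reduces modulo $\Lambda_+$ to this identification and then invokes $T$-adic completeness. This is also the mechanism behind the parallel statement, Lemma 7.15, in the Lagrangian-correspondence setting.

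One correction to your treatment of condition (2): your claim that ``the definition of $\frak n_0$ does not permit a purely constant contribution'' is not quite right. The energy-zero part of $\frak n_0$ is not empty; it is the classical differential on the chain model $C_*\bigl(R(M;\mathcal P_M)\times_{R(\Sigma;\mathcal P_\Sigma)}R(M;\mathcal P_M)\bigr)$. The correct reason $\frak n_0({\bf 1})\equiv 0\pmod{\Lambda_+}$ is that ${\bf 1}$ is \emph{closed} for this classical differential: the paper defines ${\bf 1}$ as the differential $0$-form equal to $1$ on the diagonal component $R(M;\mathcal P_M)$ and $0$ on the switching components, and since the fiber product is a disjoint union of these pieces (by the clean/transversal self-intersection hypothesis in Assumption 6.1), this locally constant function has $d{\bf 1}=0$. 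With that fix, your argument goes through.
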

Theorem \ref{them61} is an immediate consequence of 
Proposition \ref{prop611}, Theorem \ref{thm612} and 
Proposition \ref{prop613}.
\par
In the rest of this subsection we briefly explain the idea of 
the proof of Theorem \ref{thm612}.
The idea is to use Lagrangian submanifold 
$L$ as a boundary condition for an ASD-equation on 
$M \times \R$.
(It appeared in \cite{fu1}  in the year 1992.
It is ellaborated in \cite{fu2}.
Actually this is the motivation of the author when he 
introduced the notion of $A_{\infty}$ category in the study 
of gauge theory and of symplectic geometry.)
\par
We put a metric on $M$ such that it is of product 
type $\Sigma \times (-1,0]$ near the neighborhood 
of the boundary $\partial M = \Sigma$.
(Note this metric is different from one we used in 
Subsection \ref{analysisbounding}.
In Subsection \ref{analysisbounding} we take a 
Riemannian metric on $M \setminus \Sigma$ 
which is isometric to $\Sigma \times (0,\infty)$ 
outside a compact set.)
\par
We then consider the product 
and $M \times \R_{\tau}$ and 
a connection $A$ on it such that:
\begin{enumerate}
\item
$A$ is an ASD-connection. Namely $F_A^+ = 0$.
\item
$$
\int_{M \times \R_{\tau}}\Vert F_A\Vert^2 {\rm Vol}_M d\tau
< \infty.
$$
\item
There exists $(a,a'), (b,b') \in R(M;\mathcal P_M)
\times_{R(\Sigma;\mathcal P_{\Sigma})} L$
such that
$$
\lim_{\tau \to -\infty} A\vert_{M\times \{\tau\}} = a,
\quad 
\lim_{\tau \to +\infty} A\vert_{M\times \{\tau\}} = b.
$$
\item
On $\partial M \times \R_{\tau}$ the connection $A$ satisfies a
boundary condition determined by the Lagrangian submanifold $L$.
\end{enumerate}
\begin{rem}
The way how we write Condition (4) above is not precise.
There are three different ways known to set this boundary conditions
at the stage of 2016.
(One which the author proposed in  \cite{fu1} in the year 1992 does not 
seem to give a correct moduli space.)
\begin{enumerate}
\item[(i)]
The method to use a Riemannian metric which is 
degenerate near the boundary.
This was introduced by the author in \cite{fu3} in 1997.
\item[(ii)]
The method to require that $A$ is flat on 
each $\Sigma \times \{\tau\}$ and its gauge equivalence
class is in $L$.
This was introduced and used by Salamon-Wehrheim \cite{Sawe,We,We2}
at the beginning of 21st century. 
\item[(iii)]
Using pseudoholomorphic curve equation near the boundary 
and a matching condition.
This is introduced by Lipyanskiy \cite{Li} around 2010.
\end{enumerate}
The method (i), (iii) both can be used for our purpose.
The compactness and removable singularity results 
which are needed for our purpose are proved 
in  \cite{fu3} and in \cite{Li}.
The detail of the Fredholm theory is not yet written.
\par
The method (ii) works for our purpose if 
$R(M;\mathcal P_M)$ and $L$ are both embedded and monotone.
In such a case \cite{Sawe} gives a proof of Theorem \ref{thm612}.
On the other hand it seems difficult to generalize 
this method beyond the case when $R(M;\mathcal P_M)$ is embedded,
by the reason explained in \cite[Section 6]{fu7}.
\end{rem}
\par
We consider the pair $(A,\vec z)$ where $A$ is a 
connection on $M \times \R_{\tau}$ satisfying 
the above conditions (1)(2)(3)(4) and $\vec z = (\tau_1,\dots,\tau_k)
\in \R_{\tau}$ with $\tau_1 < \dots < \tau_k$.
We denote the totality of (the gauge equivalence class of) such pair 
by $\mathcal M((M;\mathcal P_M),L)$.
We use the boundary value of $A$ at $\Sigma \times \{\tau_i\}$
to define evaluation maps
$$
{\rm ev} = ({\rm ev}_1,\dots,{\rm ev}_k) : \mathcal M((M;\mathcal P_M),L) \to 
(\tilde L \times_{R(\Sigma;\mathcal P_{\Sigma})} \tilde L)^k.
$$
We consider also the case when we switch at the 
marked point $z_i$. So the target space is as above.
Using asymptotic limit as $\tau \to \pm\infty$ we obtain
$$
{\rm ev}_{\pm\infty} :
\mathcal M((M;\mathcal P_M),L)
\to 
R(M;\mathcal P_{M}) \times_{R(\Sigma;\mathcal P_{\Sigma})} \tilde L.
$$
Now let $h_{\pm\infty}$ is a differential form on 
$R(M;\mathcal P_{M}) \times_{R(\Sigma;\mathcal P_{\Sigma})} \tilde L$
and $h_1,\dots,h_k$ be differential forms on 
$\tilde L \times_{R(\Sigma;\mathcal P_{\Sigma})} \tilde L$.
The right filtered $A_{\infty}$ module structure is defined 
roughly speaking by
\begin{equation}\label{form67}
\aligned
&\langle 
\frak n_k([h_{-\infty}];[h_1],\dots,[h_k]),
[h_{+\infty}]\rangle \\
&=
\int_{\mathcal M((M;\mathcal P_M),L)}
{\rm ev}^*(h_1\times\dots h_k) \wedge 
{\rm ev}_{-\infty}^* h_{-\infty}
\wedge {\rm ev}_{+\infty}^* h_{+\infty}.
\endaligned
\end{equation}
(Note  we use integration in (\ref{form67}).
When we work on $\Z_2$ coefficient 
we actually need to use, say, singular homology rather than de Rham 
homology.)
\par
We can prove the relation (\ref{form63}) by studying the compactification 
of the moduli space $\mathcal M((M;\mathcal P_M),L)$
and its codimension one boundary.

\subsection{Proof of Gluing theorem}
\label{gluing}
In this subsection we sketch the proof of Theorem \ref{thm64}.
This proof is a `gauge theory analogue' of the 
proof by Lekili-Lipyanskiy \cite{LL} of a similar result in 
Lagrangian correspondence. (See Section \ref{lagcorr}.)
\begin{figure}[h]
\centering
\includegraphics[scale=0.3]{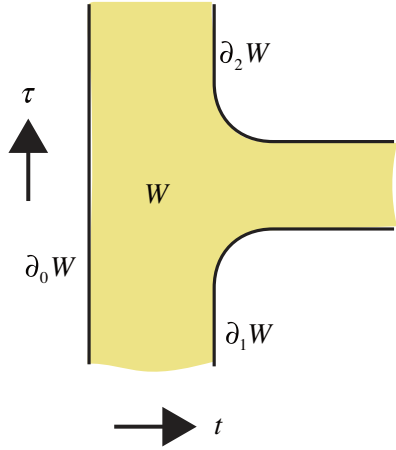}
\caption{The domain $W$}
\label{zu8}
\end{figure}
We consider a domain $W$ of $\C$ as in Figure \ref{zu8}.
It has three boundary components $\partial_0W$, $\partial_1W$, $\partial_2W$, 
which lie in the part $t=0$, $\tau<0$, $\tau>0$, respectively.
\par
We consider the direct product $\Sigma \times W$ with the direct product
metric.
We glue $M_1 \times \R_{\tau}$ with $W \times \Sigma$ by 
the diffeomorphism $\partial M_1 \times \R_{\tau} \cong \Sigma \times \partial_1W$.
We also glue $M_2 \times \R_{\tau}$ with $W \times \Sigma$ by 
the diffeomorphism $\partial M_2 \times \R_{\tau} \cong \Sigma \times \partial_2W$.
We then obtain a 4 manifold $X$ with boundary and ends.
$X$ has a boundary
$$
\partial X = \Sigma \times \partial_0W \cong \Sigma \times \R_{\tau}.
$$
$X$ has three ends.
\begin{enumerate}
\item[(End.1)]
$M_1 \times (-\infty,0]_{\tau}$. This lies in the part where $\tau \to -\infty$.
\item[(End.2)]
$M\times [0,\infty)_{t}$. Here $M$ is obtained by gluing $M_1$ and $M_2$ along 
$\Sigma$. This lies in the part where $t \to \infty$.
\item[(End.3)]
$M_1 \times [0,+\infty,0)_{\tau}$. This lies in the part where $\tau \to +\infty$.
\end{enumerate}
See Figure \ref{zu9}.

\begin{figure}[h]
\centering
\includegraphics[scale=0.3]{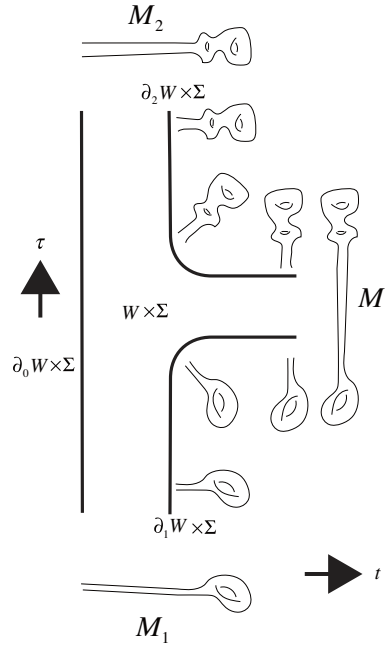}
\caption{The 4 manifold $X$}
\label{zu9}
\end{figure}

Note the $SO(3)$-bundles $\mathcal P_{M_1}$ and $\mathcal P_{M_2}$
induce an  $SO(3)$-bundle on $X$, which we denote by $\mathcal P_X$.
\par
We take 
$$
\alpha, \beta 
\in R(M_1;\mathcal P_{M_1}) \times_{R(\Sigma;\mathcal P_{\Sigma})} 
R(M_2;\mathcal P_{M_2}),
$$
and consider the connection $A$ of $\mathcal P_X$ with the 
following properties.
\begin{enumerate}
\item
$A$ is an ASD connection. Namely $F_A^+ = 0$.
\item
On boundary $(-\infty,0] \times \Sigma$
we use the Lagrangian submanifold $R(M_1;\mathcal P_{M_1})$ 
to set the boundary condition for $A$.
\item
On boundary $[0,+\infty,0) \times \Sigma$
we use the Lagrangian submanifold $R(M_2;\mathcal P_{M_2})$ 
to set the boundary condition for $A$.
\item
At $\{(0,0)\} \times \Sigma$ we require that the restriction of $A$ 
is $\alpha$. 
\item
At the end (End.1) we require that $A$ is asymptotic 
to an element of 
$$
R(M_1;\mathcal P_{M_1}) 
\subset R(M_1;\mathcal P_{M_1}) \times_{R(\Sigma;\mathcal P_{\Sigma})}
R(M_1;\mathcal P_{M_1}).
$$
\item
At the end (End.2) we require that $A$ is 
asymptotic to the flat connection $\beta$ of $(M,\mathcal P_M)$.
\item
At the end (End.3) we require that $A$ is asymptotic 
to an element of 
$$
R(M_2;\mathcal P_{M_2}) 
\subset R(M_2;\mathcal P_{M_2}) \times_{R(\Sigma;\mathcal P_{\Sigma})}
R(M_2;\mathcal P_{M_2}).
$$
\item
$$
\int_{X}\Vert F_A\Vert^2 {\rm Vol}_M d\tau
< \infty.
$$
\end{enumerate}
Note items (5) and (7) mean that $A$ is asymptotic to the 
cyclic element ${\bf 1}$ there.
See Figure \ref{zu10}.

\begin{figure}[h]
\centering
\includegraphics[scale=0.3]{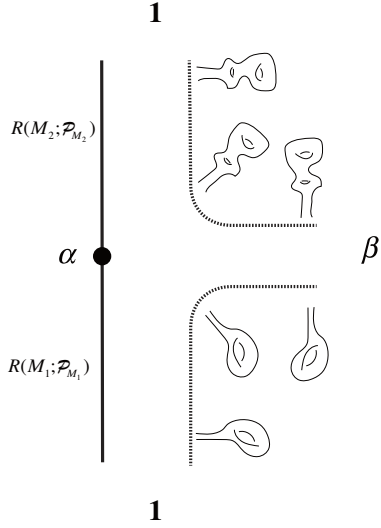}
\caption{Condition for connection $A$}
\label{zu10}
\end{figure}

\begin{defn}
We denote by
$\mathcal M((X,\mathcal P_X);\alpha,\beta)$ the moduli space
of gauge equivalence classes of the connections $A$ satisfying 
the conditions (1)-(8) above.
\end{defn}

We use this moduli space to define a map
$$
CF((R(M_1;\mathcal P_{M_1}),b_{M_1}),
(R(M_2;\mathcal P_{M_2}),b_{M_2}))
\to 
CF(M;\mathcal P_{M})
$$
by 
\begin{equation}\label{form68}
[\alpha]
\mapsto
\sum_{\beta} 
\#\mathcal M((X,\mathcal P_X);\alpha,\beta) [\beta].
\end{equation}
Here we use the component of the moduli space
$\mathcal M((X,\mathcal P_X);\alpha,\beta)$ with 
(virtual) dimension $0$.
\par
To show that (\ref{form68}) becomes a 
chain map,
we study the compactification of $\mathcal M((X,\mathcal P_X);\alpha,\beta)$
and show that its codimension one boundary is classified as follows.

\begin{enumerate}
\item[(bdry.1)]
The disk bubble at the boundary point $(\tau,0)$ with $\tau < 0$.
\item[(bdry.2)]
Disk bubble at the boundary point $(\tau,0)$ with $\tau > 0$.
\item[(bdry.3)]
Disk bubble at $(0,0)$.
\item[(bdry.4)]
Sliding ends as $\tau \to -\infty$.
\item[(bdry.5)]
Sliding ends as $\tau \to +\infty$.
\item[(bdry.6)]
Sliding ends as $t \to +\infty$.
\end{enumerate}

See Figure \ref{zu11}.

\begin{figure}[h]
\centering
\includegraphics[scale=0.5]{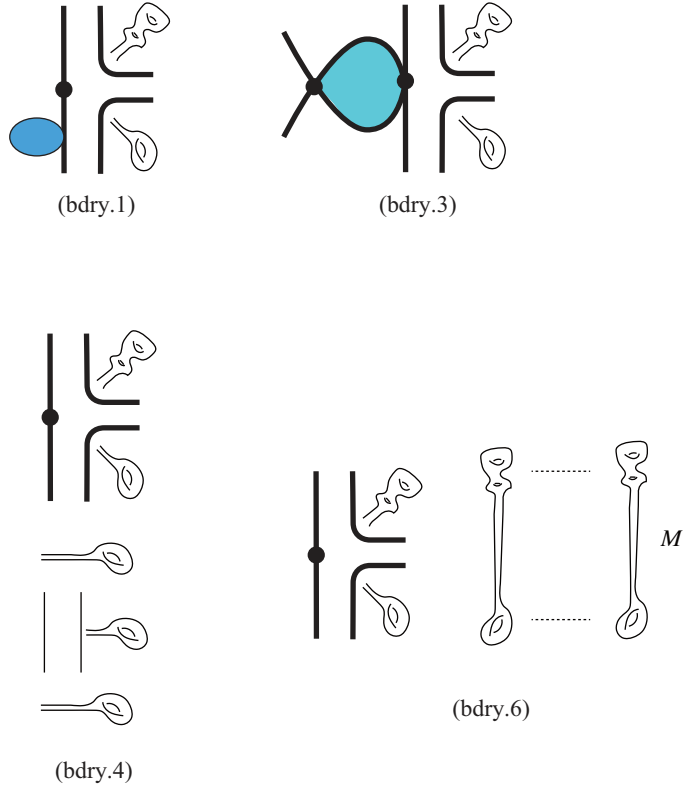}
\caption{The boundary of $\mathcal M((X,\mathcal P_X);\alpha,\beta)$}
\label{zu11}
\end{figure}

We can cancel the effect of the boundaries as in (bdry.1) and (bdry.2)
using bounding cochains $b_{M_1}$ and $b_{M_2}$.
(This part of the argument is the same as the Lagrangian Floer theory
\cite{fooobook}.)
The effect of boudaries as in (bdry.4) and (bdry.5) is zero 
because of the equality
$$
d^{b_{M_1}} {\bf 1} = d^{b_{M_2}} {\bf 1} =0.
$$
(This is the equality (\ref{form65}), which we required when we defined
$b_{M_1}$ and $b_{M_2}$.)
\par
Therefore the remaining boundary components are ones of 
(bdry.3) and (bdry.6).
\par
(bdry.3) is described by the sum of the product
\begin{equation}\label{form69}
{\mathcal M}(R(M_1;\mathcal P_{M_1}),R(M_2;\mathcal P_{M_2});
\alpha,\alpha')
\times \mathcal M((X,\mathcal P_X);\alpha',\beta)
\end{equation}
for various $\alpha'$.
Here the first factor is a special 
case of the moduli space
${\mathcal M}(L_1,L_2;
a,b)$, which we introduced in Subsection \ref{subsec:Laggeneral},
using (LF.1),(LF.2),(LF.3).
\par
(bdry.6) is described by the sum of the product
\begin{equation}\label{form610}
\mathcal M((X,\mathcal P_X);\alpha,\beta')
\times 
{\mathcal M}(M\times \R;\beta',\beta)
\end{equation}
for various $\beta'$.
Here the second factor is the moduli space 
introduced in Subsection \ref{subsec:Floer homology}
using condition (IF.1),(IF.2),(IF.3).
\par
In the simplest case where $b_{M_1} = b_{M_2} = 0$
the above argument implies that 
the sum of (\ref{form69}) and (\ref{form610})
is $0$ (in $\Z_2$ coefficient in our situation).
It implies that the map (\ref{form68}) is a chain map.
\par
In the general case we need additional term which is related 
to the correction by 
$b_{M_i}$.
It is described by the moduli space drawn in Figure \ref{zu12}.
(We omit the detail.)
\begin{figure}[h]
\centering
\includegraphics[scale=0.3]{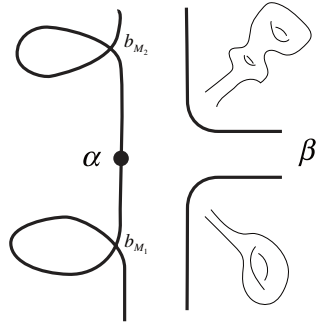}
\caption{The correction to the map (\ref{form68}).}
\label{zu12}
\end{figure}
We again obtain a chain map
$$
\mathscr F : CF((R(M_1;\mathcal P_{M_1}),b_{M_1}),
(R(M_2;\mathcal P_{M_2}),b_{M_2}))
\to 
CF(M;\mathcal P_{M}).
$$
To show that $\mathscr F$ induces isomorphism 
we use energy filtration.
The leading order term of the map $\mathscr F$
with respect to the energy filtration is the case of 
energy $0$. It consists of  connections $A$ 
which is flat. In that case $\alpha$ is necessary 
equal to $\beta$. Thus the leading order term 
of the map $\mathscr F$
is the identity map, by using the identification
$$
R(M_1;\mathcal P_{M_1})
\times_{R(\Sigma;\mathcal P_{\Sigma})}
R(M_2;\mathcal P_{M_2})
\cong 
R(M;\mathcal P_{M}).
$$
Therefore $\mathscr F$ induces the required isomorphism
$$
HF((R(M_1;\mathcal P_{M_1}),b_{M_1}),
(R(M_2;\mathcal P_{M_2}),b_{M_2}))
\cong
HF(M;\mathcal P_{M}).
$$
This is the outline of the proof of Theorem \ref{thm64}.

\section{Lagrangian correspondence and $A_{\infty}$ functors.}
\label{lagcorr}

The story we described in Section \ref{boundary}
has an analogue in the study of Lagrangian correspondence,
which we will outline in this section.
See \cite{efl} for detail.
In this section we work over the ground ring $\R$.
We need to take a spin or relative spin structure of a Lagrangian 
submanifold to use such ground ring.
Spin or relative spin structures are necessary to orient the moduli spaces 
of pseudoholomorphic disks.
(See \cite[Chapter 8]{fooobook2}.)
If $(X,\omega)$ is a symplectic manifold and $V$ is an oriented real 
vector bundle on it the $V$-relative spin structure of an 
oriented submanifold
$L \subset X$ is by definition the spin structure of the bundle
$TL \oplus V\vert_L$. 
Under this assumption the construction of Subsection \ref{immersed}
works over the ground ring $\R$ (or $\Q$).

\subsection{The main results}
\label{maincorr}

Let $(X_1,\omega_1)$ and $(X_2,\omega_2)$ be compact symplectic manifolds.

\begin{defn}
An {\it immersed Lagrangian correspondence} from $X_1$ to $X_2$ is an immersed 
Lagrangian submanifold of $(X_1 \times X_2, -\omega_1 \oplus \omega_2)$.
\par
Let $L_{12} = (\tilde L_{12},i_{L_{12}})$ 
be an immersed Lagrangian correspondence from $X_1$ to $X_2$
and $L_1= (\tilde L_{1},i_{L_{1}})$ be an immersed Langrangian submanifold
of $X_1$. If the fiber product 
$\tilde L_2 = \tilde L_1 \times_{X_1} \tilde L_{12}$ is transversal 
then, together with the composition
$\tilde L_1 \times_{X_1} \tilde L_{12} \to \tilde L_{12}
\to X_1 \times X_2 \to X_2$, the manifold $\tilde L_{12}$ defines an immersed Lagrangian 
submanifold of $X_{2}$. We call it the 
{\it geometric transformation} of $L_1$ by $L_{12}$ 
and write it as $L_1 \times_{X_2} L_{12}$.
\par
Let $(X_i,\omega_i)$, $(i=1,2,3)$ be symplectic manifolds.
Let $L_{12}$ be an immersed Lagrangian submanifold of 
$(X_1 \times X_2, -\omega_1 \oplus \omega_2)$
and $L_{23}$ a Lagrangian submanifold of $(X_2 \times X_3, -\omega_2 \oplus \omega_3)$.
We assume that the fiber product 
$$
\tilde L_{12} \times_{X_2} \tilde L_{23} 
$$
is transversal and write the fiber product as $\tilde L_{13}$.
Together with the obvious map $i_{L_{13}} : \tilde L_{13} \to X_1 \times X_3$,
the manifold 
$\tilde L_{13}$ defines an immersed Lagrangian submanifold $L_{13}$ of
$(X_1 \times X_3, -\omega_1 \oplus \omega_3)$.
We call $L_{13}$ the {\it geometric composition} of $L_{12}$ and $L_{23}$.
We write it as $L_{12} \times_{X_2} L_{23}$.
\end{defn}
See \cite{wei}.
\begin{defn}
The (immersed) {\it Weinstein category} is defined as follows.
Its object is a symplectic manifold $(X,\omega)$.
A morphism from $X_1$ to $X_2$ is an immersed Lagrangian correspondence 
from $X_1$ to $X_2$.
\par
The composition of morphisms is defined as their geometric composition.
\end{defn}
A slight issue is that, actually, we can define geometric composition only 
for a transversal pair.
However, for the purpose of most of the applications, we can go around this problem 
by considering only composable pair of morphisms.
In other words, Weinstein category is rather a `topological category' where morphisms can 
be composed only on certain dense open subset.
We can thus go around the problem by carefully stating 
various theorems in this subsection in such a way using 
only transversal pair for compositions.
Another possible way to proceed is to introduce certain equivalence 
relation between Lagrangian submanifolds such as 
Hamiltonian isotopy or Lagrangian cobordism so that we can compose 
Lagrangian correspondences after perturbing them 
in the equivalence classes.
\par\smallskip
In Subsection \ref{immersed} we start with a 
finite set of immersed Lagrangian submanifolds 
$\frak L$ of $(X,\omega)$ 
(that is, a clean collection) and obtained a filtered 
$A_{\infty}$ category, the set of  whose objects 
is $\frak L$. We denote it by $\frak{Fuk}(\frak L)$.
Roughly speaking we can take `all' immersed Lagrangian submanifolds
and define $\frak{Fuk}(X,\omega)$. An issue in doing so is perturbing 
all the Lagrangian submanifolds simultaneously to obtain some clean 
collection. We do not discuss this point. Using $\frak{Fuk}(\frak L)$
instead of  $\frak{Fuk}(X,\omega)$ is enough for the purpose of 
most of the applications.
To simplify the notation we pretend as if we defined 
the filtered $A_{\infty}$ category $\frak{Fuk}(X,\omega)$.
The actual result we prove is one which is restated by using $\frak{Fuk}(\frak L)$  instead.
\par
Note the filtered $A_{\infty}$ category $\frak{Fuk}(X,\omega)$ is in general 
curved. We denote by $\frak{Fuks}(X,\omega)$ the strict category 
associated to $\frak{Fuk}(X,\omega)$.
\par
In fact to take care of the problem of orientation and sign we need to 
use relative spin structure. We fix $V$ and consider a set of 
triples $(L,\sigma,b)$ where $L$ is a Lagrangian submanifold of $X$ 
and $\sigma$ is a $V$-relative spin structure and $b$ is a bounding cochain of 
$CF(L)$, that is the (curved) $A_{\infty}$ algebra obtained by using $(L,\sigma)$.
The strict filtered $A_{\infty}$ category whose object is such triple $(L,\sigma,b)$
is abbreviated by $\frak{Fuks}(X,\omega,V)$.

\begin{defn}\label{unobwein}
An {\it unobstructed immersed Weinstein category} is defined as follows.
\begin{enumerate}
\item
Its object is a triple $(X,\omega,V)$ where $(X,\omega)$ is a 
compact symplectic manifold and $V$ is a real oriented vector bundle on $X$.
\item
A morphism from $(X_1,\omega_1,V_1)$ to $(X_2,\omega_2,V_2)$ 
is a triple $(L_{12},\sigma_{12},b_{12})$ where 
\begin{enumerate}
\item
$L_{12}$ is an immersed Lagrangian submanifold of $(X_1 \times X_2, -\omega_1 \oplus \omega_2)$.
\item
$\sigma_{12}$ is a $\pi_1^*V_1 \oplus \pi_1^*TX_1 \oplus \pi_2^*V_2 $-
relative spin structure of $L_{12}$.
\item
$b_{12}$ is a bounding cochain of $CF(L_{12})$.
(Note the filtered $A_{\infty}$ algebra $CF(L_{12})$ is defined 
by using the relative spin structure in (b).)
\end{enumerate}
\item
See Theorem \ref{thm75} for the composition of the morphisms.
\end{enumerate}
\end{defn}
The main result of \cite{efl} is a construction of the 
(2-)functor from the unobstructed immersed Weinstein category 
to the (2-)category of all filtered $A_{\infty}$ categories.
We will state it as Theorems \ref{thm74}, \ref{thm75}, 
\ref{thm76} below.

\begin{thm}\label{thm74}{\rm(\cite{efl})}
Let $(L_{12},\sigma_{12},b_{12})$ be as in Definition \ref{unobwein}
(2).
\begin{enumerate} 
\item
Let $(L_1,\sigma_1,b_1)$ be an object of $\frak{Fuks}(X_1,\omega_1,V_1)$.
Then the geometric transformation $L_1 \times_{X_1} L_{12}$
has a canonical choice of $V_2$ relative spin structure $\sigma_2$ and 
a bounding cochain $b_2$.
\item
There exists a strict filtered $A_{\infty}$ functor 
$$
\frak W_{L_{12}}:
\frak{Fuks}(X_1,\omega_1,V_1) \to \frak{Fuks}(X_2,\omega_2,V_2)
$$
of which the map $(L_1,\sigma_1,b_1) \mapsto (L_1 \times_{X_1} L_{12},
V_2,b_2)$ in item (1) is the object part.
\item
There is a strict filtered $A_{\infty}$ bifunctor
$$
\aligned
\frak{Fuks}(X_1 \times X_2, -\omega_1 \oplus \omega_2,
\pi_1^*V_1 \oplus \pi_1^*TX_1 \oplus \pi_2^*V_2 )
&\times
\frak{Fuks}(X_1,\omega_1,V_1) \\
&\to \frak{Fuks}(X_2,\omega_2,V_2)
\endaligned
$$
which induces $\frak W_{L_{12}}$ when we fix an object of 
the first factor
$\frak{Fuks}(X_1 \times X_2, -\omega_1 \oplus \omega_2,
\pi_1^*V_1 \oplus \pi_1^*TX_1 \oplus \pi_2^*V_2 )$.
\end{enumerate} 
\end{thm}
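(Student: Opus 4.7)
The plan is to follow the cyclic-element strategy of Section \ref{boundary}, transplanted from gauge theory to quilted pseudoholomorphic polygons. Rather than running a strip-shrinking adiabatic limit in the correspondence $L_{12}$ in the spirit of Wehrheim-Woodward, I would mimic the Lekili-Lipyanskiy cobordism argument and produce both the bounding cochain $b_2$ and the functor $\frak W_{L_{12}}$ from moduli spaces of pseudoholomorphic curves in $X_1$, $X_2$, and the product $X_1 \times X_2$ (with its sign-reversed symplectic form), using Proposition \ref{prop611} to guarantee the existence of $b_2$. For the relative spin structure, the tangent bundle of $\tilde L_2 = \tilde L_1 \times_{X_1} \tilde L_{12}$ fits into the short exact sequence $0 \to T\tilde L_2 \to T\tilde L_1 \oplus T\tilde L_{12} \to \pi_1^*TX_1 \to 0$; combined with the stabilizations by $\pi_1^*V_1$ and $\pi_2^*V_2$ and the given relative spin structures $\sigma_1, \sigma_{12}$, this yields a preferred $V_2$-relative spin structure $\sigma_2$ on $L_2$, canonical up to the expected isomorphism.

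For $b_2$, I would build a right filtered $A_\infty$ module over $CF(L_2)$ whose underlying $\Lambda_0$-module is a chain model of the homology of the triple fiber product $\tilde L_1 \times_{X_1} \tilde L_{12} \times_{X_2} \tilde L_2$, which contains $\tilde L_2$ as an open stratum through its diagonal embedding. The module operations $\frak n_k$ count pseudoholomorphic polygons in $X_1 \times X_2$ having one boundary arc on $L_{12}$, one arc on $L_1 \times L_2$, and $k$ additional arcs carrying inputs from $CF(L_2)$; insertions of $b_1$ on the $L_1$-part of the boundary and of $b_{12}$ on the $L_{12}$-part deform the operations so that the right $A_\infty$-module relation (\ref{form63}) follows from the usual enumeration of codimension-one boundary strata. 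The fundamental chain $\mathbf 1$ of $\tilde L_2$, viewed inside the diagonal stratum, serves as a cyclic element in the sense of Subsection \ref{cyclic}; Proposition \ref{prop611} then produces a unique $G$-gapped bounding cochain $b_2$ solving both the Maurer-Cartan equation for $CF(L_2)$ and the equation $d^{b_2}\mathbf 1 = 0$.

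For the $A_\infty$ functor $\frak W_{L_{12}}$ of part (2), I would define its higher components by counting quilted $(k{+}1)$-gons in $X_1 \times X_2$: on the $X_1$ side the boundary carries Lagrangians $L_1^{(0)}, \dots, L_1^{(k)}$ with inputs $x_i \in CF(L_1^{(i-1)}, L_1^{(i)})$ at the marked points and with $b_1^{(i)}$ inserted along each arc; the seam carries $L_{12}$ with insertions of $b_{12}$; on the $X_2$ side the transformed Lagrangians $L_2^{(0)}, \dots, L_2^{(k)}$ appear with the bounding cochains $b_2^{(i)}$ supplied by Step 2, and the output is read off from the distinguished marked point on the $X_2$ side. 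The bifunctor of part (3) is obtained by allowing $(L_{12}, b_{12})$ itself to vary as a second family of inputs, with the resulting composition law dictated by the $A_\infty$ operations on $CF(L_{12})$ deformed by its bounding cochain.

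The hard part, exactly as in the proof of Theorem \ref{thm64}, will be the analytic and combinatorial verification common to Steps 2 and 3: constructing Kuranishi-type compactifications of the quilted moduli spaces, cataloguing their codimension-one boundary strata, and matching them against the $A_\infty$-module and $A_\infty$-functor axioms. Four qualitatively different degenerations occur: disk bubbling on each of the three boundary types, breaking of the $(k{+}1)$-gon along the $X_1$ side, breaking along the $X_2$ side, and switching of sheets at immersed self-intersections of $L_1^{(i)}, L_{12}, L_2^{(i)}$. Cancellation of the first kind uses the Maurer-Cartan equations for $b_1^{(i)}$, $b_{12}$ and $b_2^{(i)}$; the second uses the $A_\infty$ composition in $\frak{Fuks}(X_1,\omega_1,V_1)$; the third is precisely the functor relation for $\frak W_{L_{12}}$; and the switching contributions are absorbed into the switching parts of the same bounding cochains. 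Strict unitality follows from the divisor-type vanishing of fundamental-class insertions as in \cite[Section 3.6.3]{fooobook}, and the whole package is expected to go through by essentially the same bookkeeping as Subsection \ref{gluing}.
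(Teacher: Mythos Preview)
Your approach to part~(1) agrees with the paper's: build the tri-module $D$ on the triple fiber product (Proposition~\ref{prop713}), specialize by $b_1,b_{12}$ to a right $CF(L_2)$-module (Corollary~\ref{cor14}), observe that the fundamental class of $\tilde L_2$ is cyclic (Lemma~\ref{lem715}), and apply Proposition~\ref{prop611}.

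For parts~(2) and~(3) your route diverges from the paper's and has a gap. You propose to define $\frak W_{L_{12}}$ directly by counting quilted $(k{+}1)$-gons with inputs on the $X_1$ side, a seam on $L_{12}$, and an output read off on the $X_2$ side; this is essentially the Ma'u--Wehrheim--Woodward construction, which (as the remark after the theorem notes) is known only in the embedded monotone case. Your catalogue of codimension-one strata omits the degeneration at the points where the seam meets the outer boundary --- the figure-eight bubble of \cite{bott,bww} --- and it is precisely this stratum that obstructs the direct construction in the general immersed, curved setting; your claim that ``breaking along the $X_2$ side is precisely the functor relation'' presupposes a domain with $k{+}1$ output arcs on the $X_2$ side whose seam geometry you have not specified. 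The paper avoids all of this by never landing in $\frak{Fuks}(X_2)$ geometrically: it upgrades the tri-module to a tri-\emph{functor} into $\mathscr{CH}$ (Proposition~\ref{prop716}), still using only the simple two-patch strips of Figure~\ref{Figure51} with no seam endpoints on the boundary; curries via Lemma~\ref{lem79} to a bifunctor into $\frak{Funcs}(\frak{Fuks}(X_2)^{\rm op},\mathscr{CH})$; proves representability (Proposition~\ref{lem717}); and then composes with the homotopy inverse of the Yoneda embedding (Theorem~\ref{thm712}) to obtain the bifunctor into $\frak{Fuks}(X_2)$. The functor is thus produced \emph{algebraically} from a module-level construction, so figure-eight bubbling never enters the analysis.
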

The notions of filtered $A_{\infty}$ functor and bifunctor 
(and its strictness) are explained in the next 
subsection.
\begin{rem}
If we assume all the Lagrangian submanifolds involved 
(including those appearing as fiber products 
among the Lagrangian submanifolds) are 
embedded, monotone and have minimal Maslov number $>2$, 
Theorem \ref{thm74} follows from the earlier results 
by Wehrheim-Woodward \cite{WW,WW2}
and 
Ma'u-Wehrheim-Woodwards \cite{MWW}.
The same remark applies to Theorems \ref{thm75} and \ref{thm76}.
\end{rem}
Note Theorem \ref{thm74} (3) implies that there exists a strict 
filtered $A_{\infty}$ functor
\begin{equation}\label{form71}
\aligned
&\frak{Fuks}(X_1 \times X_2, -\omega_1 \oplus \omega_2,
\pi_1^*V_1 \oplus \pi_1^*TX_1 \oplus \pi_2^*V_2 ) \\
&\to
\frak{Funcs}
(\frak{Fuks}(X_1,\omega_1,V_1),
\frak{Fuks}(X_2,\omega_2,V_2)).
\endaligned
\end{equation}
Here $\frak{Funcs}(\mathscr C_1,\mathscr C_2)$ is the strict filtered $A_{\infty}$ is 
category whose object is a strict filtered $A_{\infty}$ functor 
$: \mathscr C_1 \to \mathscr C_2$.
\begin{thm}\label{thm75}{\rm(\cite{efl})}
Let $(L_{12},\sigma_{12},b_{12})$ (resp. $(L_{23},\sigma_{23},b_{23})$)
be morphisms from $(X_1,\omega_1,V_1)$ to $(X_2,\omega_2,V_2)$
(resp. from $(X_2,\omega_2,V_2)$ to $(X_3,\omega_3,V_3)$).
\begin{enumerate}
\item
Let $L_{13} = L_{12} \times_{X_2} L_{23}$ be the geometric composition.
Then we can define a $\pi_2^*V_2 \oplus \pi_2^*TX_2 \oplus \pi_3^*V_3$-
relative spin structure $\sigma_{13}$ on it and 
a bounding cochain $b_{13}$ of $CF(L_{13})$.
(In particular $L_{13}$ is unobstructed.) 
\item
Let $\frak W_{L_{12}}$, $\frak W_{L_{23}}$ and $\frak W_{L_{13}}$ 
be strict filtered $A_{\infty}$ functors associated to 
$(L_{12},\sigma_{12},b_{12})$, $(L_{23},\sigma_{23},b_{23})$
and $(L_{13},\sigma_{13},b_{13})$, respectively, by 
Theorem \ref{thm74} (2).
Then 
\begin{equation}\label{form72}
\frak W_{L_{23}} \circ \frak W_{L_{12}} \sim \frak W_{L_{13}}.
\end{equation}
Here the left hand side is the composition of strict filtered $A_{\infty}$
functors and $\sim$ is the homotopy equivalence of two strict  filtered $A_{\infty}$ 
functors.
\item
The next diagram commutes up to homotopy equivalence of 
strict filtered $A_{\infty}$ bifunctors.
$$
\begin{CD}
\frak{Fuks}(X_1 \times X_2)
\times \frak{Fuks}(X_2 \times X_3) @ >>>
\frak{Fuks}(X_1 \times X_3) \\
@ VVV @ VVV\\
\displaystyle \frak{Funcs}(\frak{Fuks}(X_1),\frak{Fuks}(X_2)) 
\atop \displaystyle \times \frak{Funcs}(\frak{Fuks}(X_2),\frak{Fuks}(X_3)) @ > >> \frak{Funcs}(\frak{Fuks}(X_1),\frak{Fuks}(X_3))
\end{CD}
$$
Here the vertical arrows are (\ref{form71}).
(We omit the bundle $V_i$ in the notation.)
The first horizontal line is a strict filtered $A_{\infty}$ bifunctor 
whose object part sends $((L_{12},\sigma_{12},b_{12}),
(L_{23},\sigma_{23},b_{23}))$ to $(L_{13},\sigma_{13},b_{13})$
in item (1).
\par
The second horizontal line is a strict filtered $A_{\infty}$ bifunctor
whose object part is a composition of filtered $A_{\infty}$ 
functors.
\par
(Note the commutativity of this diagram in the object level 
is (\ref{form72}).)
\end{enumerate}
\end{thm}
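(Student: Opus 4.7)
My plan is to adapt the cyclic-element / cobordism strategy from Section \ref{boundary} to the purely symplectic setting, avoiding any strip-shrinking limit in favor of moduli spaces of quilted pseudoholomorphic polygons whose seams remember the correspondences $L_{12}$ and $L_{23}$ and whose external boundaries test against the composition $L_{13}$.

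For part (1), I will produce $b_{13}$ via the cyclic-element lemma, Proposition \ref{prop611}. Concretely, I construct a right filtered $A_{\infty}$ module $\mathcal D$ over the (curved) filtered $A_{\infty}$ algebra $CF(L_{13})$ whose underlying $\Lambda_0$-module is built from chains on the fiber product $\tilde L_{12}\times_{X_2}\tilde L_{23}$ together with its switching generators, and whose structure maps $\frak n_k$ count virtual zero-strata of a moduli space of quilted polygons in $X_1\times X_2\times X_3$ with seams on $L_{12},L_{23}$ and external boundary on $L_{13}$. Since by definition $\tilde L_{13}=\tilde L_{12}\times_{X_2}\tilde L_{23}$, the fundamental chain $\mathbf 1_{L_{13}}$ supported on the diagonal component is a cyclic element, so Proposition \ref{prop611} yields a unique $G$-gapped bounding cochain $b_{13}$; a degree/energy count as in the proof of Proposition \ref{prop5757} forces $b_{13}$ into the switching part. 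The relative spin structure $\sigma_{13}$ is inherited from $\sigma_{12}$ and $\sigma_{23}$ by the standard Stiefel--Whitney-class bookkeeping on a clean fiber product.

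For part (2), I exhibit the homotopy $\frak W_{L_{23}}\circ\frak W_{L_{12}}\sim\frak W_{L_{13}}$ by a one-parameter family of quilted domains $W_\epsilon$, $\epsilon\in[0,\infty]$: at $\epsilon=\infty$ the two seams are widely separated and the resulting count computes the composition $\frak W_{L_{23}}\circ\frak W_{L_{12}}$ via a neck-stretching limit, while at $\epsilon=0$ the seams collide along the fiber product so that $L_{13}$ becomes the effective boundary condition, with $b_{13}$ inserted at the collision point exactly as in the correction of Figure \ref{zu12}. Counting zero-dimensional strata of the total parameter moduli space defines a pre-natural transformation; its codimension-one boundary strata---disk bubbles absorbed by $b_{12},b_{23},b_{13}$, together with end-slidings cancelled by the cyclicity equation $d^{b_{13}}\mathbf 1=0$---show it is a closed natural transformation, and the energy-zero term is the identity under the canonical chain-level identification of both sides. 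Part (3) follows by the same family-of-domains construction applied to arbitrary tuples of Hom-inputs in $\frak{Fuks}(X_1\times X_2)$ and $\frak{Fuks}(X_2\times X_3)$; no essentially new geometry enters.

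The main obstacle is the degenerate end $\epsilon=0$ of the parameter family, which is classically the source of the strip-shrinking analytic breakdown. In the cobordism reformulation this becomes an algebraic compatibility: the boundary contributions from the parameter family at $\epsilon=0$ must be governed by precisely the same recursion that defined $b_{13}$ in (1), so that they are exactly cancelled by the insertions of $b_{13}$. This is ultimately a uniqueness statement rooted in Proposition \ref{prop611}, and the real technical work lies in establishing the requisite transversality, Gromov-type compactness, and removable-singularity result at the collision seam, together with a sign verification using the inherited relative spin structure $\sigma_{13}$.
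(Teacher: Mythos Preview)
Your treatment of part (1) is essentially the paper's: build a tri-module over $CF(L_{12})$, $CF(L_{23})$, $CF(L_{13})$ from a quilted cylinder divided into three regions mapping to $X_1,X_2,X_3$ (the paper's Figure~\ref{Figure82}), plug in $b_{12},b_{23}$ to obtain a right $CF(L_{13})$-module, and feed the fundamental class as cyclic element into Proposition~\ref{prop611}.

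For parts (2) and (3), however, there is a genuine gap. You propose a one-parameter family of domains $W_\epsilon$ with two seams that are widely separated at $\epsilon=\infty$ and collide at $\epsilon=0$, and you concede that ``the real technical work lies in establishing the requisite transversality, Gromov-type compactness, and removable-singularity result at the collision seam.'' But that collision \emph{is} the strip-shrinking / figure-eight singularity (cf.\ \cite{bott,bww}); it is precisely the analysis the paper sets out to avoid. Asserting that ``in the cobordism reformulation this becomes an algebraic compatibility'' is circular: you cannot know that the $\epsilon=0$ boundary contributions match the recursion defining $b_{13}$ until you have compactified at $\epsilon=0$, and that is the hard part you have not done.

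The paper's route bypasses this entirely via the Lekili--Lipyanskiy $Y$-diagram (Figure~\ref{Figure93}). This is a single \emph{fixed} quilted domain, divided into three regions mapping to $X_1,X_2,X_3$, carrying three seams $L_{12},L_{23},L_{13}$ and three outer boundary arcs $L_1,L_2,L_3$. The seams never collide; they meet at an interior point which is \emph{removed}, producing a fourth, cylindrical end. Ordinary neck-stretching at that end limits to the cylinder moduli space of Figure~\ref{Figure82}---the very one used in part~(1)---so the boundary contribution there is, by construction, the tri-module structure map, and the bounding cochain $b_{13}$ obtained in part~(1) cancels it on the nose. The remaining three ends are standard strip-like ends modeled on Figure~\ref{Figure51}. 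Thus only ordinary Gromov compactness at strip-like and cylindrical ends is needed; no degenerating family of domains enters, and no seam-collision analysis is required.
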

\begin{thm}\label{thm76}{\rm(\cite{efl})}
The next diagram commutes up to  homotopy equivalence of 
strict filtered $A_{\infty}$ tri-functors.
$$
\begin{CD}
\displaystyle \frak{Fuks}(X_1 \times X_2)
\times \frak{Fuks}(X_2 \times X_{3})
\atop\displaystyle 
\!\!\!\!\!\!\!\!\!\!\!\!\!\!\!\!\!\!\!\!\!\!\!\!\!\!\!\!\!\!\!\!\!\!\!\!\!\!\!\!\!\!\times
\frak{Fuks}(X_3 \times X_{4}) @ >>>
\frak{Fuks}(X_1 \times X_{3})
\times\frak{Fuks}(X_3 \times X_{4}) \\
@ VVV @ VVV\\
\frak{Fuks}(X_1 \times X_{2})  
\times \frak{Fuks}(X_2 \times X_{4}) @ > >> \frak{Fuks}(X_1 \times X_{4}).
\end{CD}
$$
\par\smallskip
\noindent
where all the arrows are defined by composition functor in 
Theorem \ref{thm75}.
\end{thm}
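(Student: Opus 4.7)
The plan is to deduce Theorem \ref{thm76} from Theorem \ref{thm75} by a two-step reduction, where both the top and bottom routes in the diagram end up being identified with a common triple composition of strict filtered $A_\infty$ functors. Concretely, for composable triples $(L_{12},L_{23},L_{34})$, the top-right path of the diagram produces (at the object level) the Weinstein transform $\frak W_{(L_{12}\times_{X_2}L_{23})\times_{X_3}L_{34}}$, while the bottom-left path produces $\frak W_{L_{12}\times_{X_2}(L_{23}\times_{X_3}L_{34})}$. Applying Theorem \ref{thm75}(2) twice in each branch gives homotopy equivalences of both with $\frak W_{L_{34}}\circ\frak W_{L_{23}}\circ\frak W_{L_{12}}$ (using that composition of strict filtered $A_\infty$ functors is strictly associative in $\frak{Funcs}$). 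This already gives the commutativity at the level of underlying object maps; what remains is to promote this to the level of strict filtered $A_\infty$ tri-functors and to a homotopy of tri-functors.

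First I would verify the set-theoretic and structural associativity of geometric composition: the fiber product $\tilde L_{12}\times_{X_2}\tilde L_{23}\times_{X_3}\tilde L_{34}$ is canonically independent of bracketing, and the relative spin structures $\sigma_{13}$, $\sigma_{24}$ constructed in Theorem \ref{thm75}(1) induce, after one further geometric composition, \emph{the same} $\pi^*V_1\oplus\pi^*TX_1\oplus\pi^*TX_2\oplus\pi^*TX_3\oplus\pi^*V_4$-relative spin structure on the triple fiber product (this is a bookkeeping check comparing the two iterated Stiefel--Whitney class computations). Next, for the bounding cochain, the iterative construction in Theorem \ref{thm75}(1) gives $b_{(12)(34)}$ and $b_{(12)(23)(34)}$ (in the two bracketings); I would produce an explicit gauge equivalence between them by interpolating via a one-parameter family of moduli spaces of quilted pseudoholomorphic polygons with \emph{three} seams separating strips mapping to $X_1,X_2,X_3,X_4$ — the seam conditions being $L_{12},L_{23},L_{34}$. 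This is the standard ``strip-shrinking replaced by cobordism'' picture used in \cite{efl}: degenerating such a quilted polygon in two different ways gives the two bracketings, and the parametrized moduli space provides the chain-level gauge equivalence.

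To upgrade to a homotopy of strict filtered $A_\infty$ tri-functors, I would construct both sides of the diagram as operations associated to a single \emph{universal} moduli space of quilted disks with three seam circles and three families of marked points (one family on each seam). The associahedral degenerations of this moduli space cut out the two bracketings (corresponding to collapsing one seam circle against another), while the full moduli space itself provides the homotopy interpolating between them. The coherence equations for a tri-functor homotopy are then read off from the codimension-one boundary strata, which match the operations $\frak m_k$ of the target Fukaya categories and the composition operations of $\frak W_{L_{ij}}$. The hard part, which is exactly what is delicate throughout \cite{efl}, will be ensuring that this parametrized moduli space carries a Kuranishi structure compatible at the boundary with all the previously constructed structures (in particular with the bounding cochains $b_{12},b_{23},b_{34}$ used to define the $\frak W_{L_{ij}}$), so that the algebraic identity of a tri-functor homotopy really does follow from an honest codimension-one boundary decomposition; monotonicity being unavailable in the general immersed setting, one must run the obstruction-theoretic/$G$-gapped induction argument of Proposition \ref{prop611}-type to absorb the disk-bubble contributions into a correction of the sought-after homotopy.
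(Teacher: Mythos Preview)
Your approach is essentially the same as the paper's, though you supply considerably more detail than the paper itself does. The paper's entire treatment of Theorem \ref{thm76} is a single sentence pointing to a figure (Figure \ref{Figure11-1}, ``Associativity of compositions''): the proof is by constructing a moduli space of quilted holomorphic maps whose domain is divided into four regions mapping to $X_1,X_2,X_3,X_4$ with seam conditions $L_{12},L_{23},L_{34}$, and whose codimension-one degenerations realize the two bracketings. Your third paragraph describes exactly this object and the mechanism by which it furnishes the tri-functor homotopy.

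One small remark on your opening reduction: identifying both routes with $\frak W_{L_{34}}\circ\frak W_{L_{23}}\circ\frak W_{L_{12}}$ via Theorem \ref{thm75}(2) only shows commutativity \emph{after} composing with the functor (\ref{form71}) into $\frak{Funcs}(\frak{Fuks}(X_1),\frak{Fuks}(X_4))$, which is weaker than what Theorem \ref{thm76} asserts (commutativity already in $\frak{Fuks}(X_1\times X_4)$). You correctly flag that this object-level check must be promoted, and your subsequent construction of the parametrized quilted moduli space is what actually carries the weight --- that part stands on its own and does not need the $\frak W$ detour as input. The paper likewise bypasses the $\frak W$ functors entirely and works directly with the four-region quilt.
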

Theorems \ref{thm74}, \ref{thm75}, 
\ref{thm76} provide the functorial picture of the 
construction of $A_{\infty}$ categories out of symplectic 
manifolds.
\begin{rem}
Theorems \ref{thm74}, \ref{thm75}, 
\ref{thm76} provide the functorial picture up to the level of 2-category.
Since the diagram in Theorem \ref{thm76} 
commutes only up to homotopy, we may continue and may have 
a compatibility as $\infty$-categories.
\end{rem}

\subsection{$A_{\infty}$ functor, $A_{\infty}$ bi-functor, 
and Yoneda's lemma}
\label{Yoneda}

The proof of Theorem \ref{them61} which we explained in 
Subsection \ref{exist} could be regarded as a proof using the idea of 
representable functors.
For the proof of Theorems \ref{thm74}, \ref{thm75}, 
\ref{thm76} we use a similar idea in more systematic way.
In this subsection we explain certain definitions and 
results
in the story of $A_{\infty}$ category needed for this purpose.
See \cite{fu4,Ke,Lef,Ly,sei} etc. for homological 
algebra of $A_{\infty}$ category.
The notion of $A_{\infty}$ bi-functor is discussed in more 
detail in \cite{efl}.
\par
Let $\mathscr C$ be an filtered $A_{\infty}$ category
and $c,c' \in \frak{OB}(\mathscr C)$.
We define
\begin{equation}
B_k \mathscr C[1](c,c')
= 
\bigoplus \bigotimes_{i=1}^k \mathscr C[1](c_{i-1},c_i)
\end{equation}
where direct sum is taken over all $c_0,\dots,c_k$ such that
$c_0 = c$ and $c_k =c'$,
and 
\begin{equation}
B \mathscr C[1](c,c') 
= \bigoplus_{k=0}^{\infty}B_k \mathscr C[1](c,c').
\end{equation}
$B \mathscr C[1](c,c')$ has a coalgebra structure 
$$
\Delta : B \mathscr C[1](c,c') 
\to \bigoplus_{c''} B \mathscr C[1](c,c'')  \,\hat\otimes B \, \mathscr C[1](c'',c') 
$$
defined by
$$
\Delta(x_1\otimes \dots \otimes x_k)
= 
\sum_{i=1}^{k-1} (x_1 \otimes \dots \otimes x_i)
\otimes (x_{i+1} \otimes \dots \otimes x_k).
$$
The operation $\frak m_k$ induces a unique coderivation 
$B \mathscr C[1](c,c') \to B \mathscr C[1](c,c')$ so that its  
$Hom(B_k\mathscr C[1](c,c'),B_1\mathscr C[1](c,c'))$ component is 
$\frak m_k$. We denote it by $\hat d_k$ 
and put 
$$
\hat d = \sum \hat d_k : B \mathscr C[1](c,c') \to B \mathscr C[1](c,c').
$$
The $A_{\infty}$ relation (\ref{formula25}) is equivalent to 
the equality $\hat d \circ \hat d = 0$.

\begin{defn}
Let $\mathscr C_1, \mathscr C_2$ be filtered $A_{\infty}$
categories.
A filtered $A_{\infty}$ functor $\mathscr F : \mathscr C_1 \to \mathscr C_2$
consists of objects:
\begin{enumerate}
\item
A map $\mathscr F_{ob} : \frak{OB}(\mathscr C_1) \to \frak{OB}(\mathscr C_2)$.
\item
A series of maps
$$
\mathscr F_{c,c'} : B \mathscr C_1[1](c,c') \to \mathscr C_2[1](\mathscr F_{ob}(c),
\mathscr F_{ob}(c')),
$$
which preserves filtrations.
\item
We require that the coalgebra homomorphism
$$
\widehat{\mathscr F}_{c,c'} : B \mathscr C_1[1](c,c') \to B \mathscr C_2
[1](\mathscr F_{ob}(c),
\mathscr F_{ob}(c'))
$$
induced by $\mathscr F_{c,c'}$ is a chain map with respect to the 
boundary operator $\hat d$.
\item
We require
$$
\widehat{\mathscr F}_{c,c'}(x_1,\dots,{\bf e},\dots,x_k) = 0
$$
except
$$
\widehat{\mathscr F}_{c,c}({\bf e}_c) = {\bf e}_{{\mathscr F}_{ob}(c)}.
$$
\end{enumerate}
A filtered $A_{\infty}$ functor is said to be {\it strict} if its restriction 
to $B_0 \mathscr C_1[1](c,c)$ is $0$.
\end{defn}
For a pair of  filtered $A_{\infty}$ categories
$\mathscr C_1, \mathscr C_2$ we can define a 
filtered $A_{\infty}$ category $\frak{Func}(\mathscr C_1, \mathscr C_2)$
whose object is a filtered $A_{\infty}$ functor $:\mathscr C_1 \to \mathscr C_2$.
We can define its strict version $\frak{Funcs}(\mathscr C_1, \mathscr C_2)$
in the same way.

\begin{defn}\label{defn78}
Let $\mathscr C_1, \dots, \mathscr C_n$ and $\mathscr C$ be 
filtered $A_{\infty}$ categories.
A filtered $A_{\infty}$ multi-functor 
$$
\mathscr F : 
\mathscr C_1 \times \dots \times  \mathscr C_n \to \mathscr C
$$
consists of the following objects.
\begin{enumerate}
\item
A map $\mathscr F_{ob} : \prod_{i=1}^n\frak{OB}(\mathscr C_i) \to \frak{OB}(\mathscr C)$.
\item
A series of $\Lambda_0$ module homomorphisms
$$
\mathscr F_{(c_1,\dots,c_n),(c'_1,\dots,c'_n)} : 
\bigotimes_{i=1}^n B \mathscr C_i[1](c_i,c_i') \to \mathscr C(\mathscr F_{ob}(c_1,\dots,c_n),
\mathscr F_{ob}(c'_1,\dots,c'_n)),
$$
which preserve filtrations.
\item
We require that the coalgebra homomorphism
$$
\hat{\mathscr F}_{(c_1,\dots,c_n),(c'_1,\dots,c'_n)} : 
\bigotimes_{i=1}^n B \mathscr C_i[1](c_i,c_i') \to 
B\mathscr C[1](\mathscr F_{ob}(c_1,\dots,c_n),
\mathscr F_{ob}(c'_1,\dots,c'_n))
$$
induced by ${\mathscr F}_{(c_1,\dots,c_n),(c'_1,\dots,c'_n)}$ is a chain map with respect to the 
boundary operator $\hat d$.
\item
$\hat{\mathscr F}_{(c_1,\dots,c_n),(c'_1,\dots,c'_n)}(\bf x)$ is zero 
if $\bf x$ contains a unit except 
$$
\hat{\mathscr F}_{(c_1,\dots,c_n),(c_1,\dots,c_n)}({\bf e}_{c_1},\dots,{\bf e}_{c_n})
= {\bf e}_{\mathscr F_{ob}(c_1,\dots,c_n)}.
$$
\end{enumerate}
\end{defn}
We use coalgebra structures of $B \mathscr C_i[1](c_i,c_i')$
to define a coalgebra structure on 
$\bigotimes_{i=1}^n B \mathscr C_i[1](c_i,c_i')$
in an obvious way.
\par
We can define strictness of multi-functor in the same way.
\begin{lem}\label{lem79}
The following two objects can be identified.
\begin{enumerate}
\item 
A filtered $A_{\infty}$ bifunctor 
$
\mathscr F : 
\mathscr C_1 \times \mathscr C_2 \to \mathscr C
$.
\item
A filtered $A_{\infty}$ functor 
$
\mathscr F : 
\mathscr C_1 \to \frak{Funk}(\mathscr C_2,\mathscr C)
$.
\end{enumerate}
\end{lem}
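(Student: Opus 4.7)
The plan is to realize both sides of the claimed correspondence as special kinds of codifferential coalgebra maps, where the passage between them is a categorified version of the usual currying adjunction $\mathrm{Hom}(A\otimes B,C)\cong \mathrm{Hom}(A,\mathrm{Hom}(B,C))$. Recall that, by the cofreeness of the tensor coalgebras $B\mathscr C_i[1](c_i,c_i')$, the data in Definition \ref{defn78} (2) is equivalent to a single coalgebra homomorphism
\begin{equation*}
\widehat{\mathscr F}_{(c_1,c_2),(c_1',c_2')} : B\mathscr C_1[1](c_1,c_1')\,\widehat\otimes\, B\mathscr C_2[1](c_2,c_2') \longrightarrow B\mathscr C[1](\mathscr F_{ob}(c_1,c_2),\mathscr F_{ob}(c_1',c_2')),
\end{equation*}
compatible with $\hat d$ on the two sides, where the tensor product of coalgebras carries its canonical coproduct. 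Dually, a filtered $A_\infty$ functor into $\frak{Func}(\mathscr C_2,\mathscr C)$ is, after unwinding the definition of morphisms in $\frak{Func}$ as pre-natural transformations, precisely a coalgebra homomorphism $B\mathscr C_1[1](c_1,c_1')\to \mathrm{Coder}\bigl(B\mathscr C_2[1],B\mathscr C[1]\bigr)$ compatible with the appropriate codifferentials. The whole lemma will therefore reduce to the fact that these two packages of data are adjoint under the tensor-hom adjunction, together with a matching of $\hat d$-compatibilities and of unit conditions.

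Concretely, given a bifunctor $\mathscr F$, I would define its curried form $\widetilde{\mathscr F}$ as follows. On objects, $\widetilde{\mathscr F}(c_1)$ is the $A_\infty$ functor $\mathscr G_{c_1}:\mathscr C_2\to \mathscr C$ whose object map sends $c_2$ to $\mathscr F_{ob}(c_1,c_2)$ and whose Taylor coefficients are
\begin{equation*}
(\mathscr G_{c_1})_{c_2,c_2'} \;=\; \mathscr F_{(c_1,c_2),(c_1,c_2')}\bigl({\bf e}_{c_1}^{\otimes 0};\,-\,\bigr),
\end{equation*}
i.e.\ the restriction of $\mathscr F$ to $B_0\mathscr C_1[1](c_1,c_1)\otimes B\mathscr C_2[1]$. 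For a higher Taylor coefficient, given $x_1\otimes\cdots\otimes x_k \in B_k\mathscr C_1[1](c_1,c_1')$, I would define the pre-natural transformation $\widetilde{\mathscr F}_k(x_1,\dots,x_k):\mathscr G_{c_1}\Rightarrow \mathscr G_{c_1'}$ by its components
\begin{equation*}
\widetilde{\mathscr F}_k(x_1,\dots,x_k)_{c_2,c_2'}(y_1,\dots,y_\ell) \;=\; \mathscr F_{(c_1,c_2),(c_1',c_2')}(x_1,\dots,x_k;\,y_1,\dots,y_\ell).
\end{equation*}
The unit clauses (Definition \ref{defn78} (4)) translate directly into the unit clauses defining pre-natural transformations into $\frak{Func}$, and in particular the object $\mathscr G_{c_1}$ is unital.

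The core of the proof is then to check that the cochain-map condition on $\widehat{\mathscr F}$ is the same as the cochain-map condition on $\widehat{\widetilde{\mathscr F}}$. I would carry this out by writing the codifferential on $B\mathscr C_1[1]\,\widehat\otimes\, B\mathscr C_2[1]$ as $\hat d_1\otimes 1+1\otimes \hat d_2$ and expanding the $A_\infty$-functor relation for $\widetilde{\mathscr F}$ according to the standard formula for the codifferential on $B\,\frak{Func}(\mathscr C_2,\mathscr C)[1]$, whose Taylor components are themselves alternating sums of insertions of $\frak m_k$ into pre-natural transformations; matching summand by summand identifies the two equations term for term. The inverse construction simply reads off $\mathscr F_{(c_1,c_2),(c_1',c_2')}(x_1,\dots,x_k;y_1,\dots,y_\ell)$ from the component of $\widetilde{\mathscr F}_k(x_1,\dots,x_k)$, and the two passages are visibly mutually inverse. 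Strictness (vanishing on $B_0$) on one side corresponds to strictness on the other, so the correspondence restricts to $\frak{Funcs}$.

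The step I expect to be the genuine obstacle is not conceptual but combinatorial: verifying that the codifferential-compatibility on the bifunctor side matches, term by term with the correct signs, the sum of the differential on $\mathrm{Coder}(B\mathscr C_2[1],B\mathscr C[1])$ and the $A_\infty$-functor relations for $\widetilde{\mathscr F}$. Both sides are long alternating sums indexed by how one partitions the $x_i$'s and $y_j$'s among applications of $\frak m$ and $\mathscr F$, and a careful sign audit (using $*=i+\sum_{j\le i}\deg x_j$ as in (\ref{formula25})) is needed to see that the two partitions of the summation range are bijectively identified. Once that bookkeeping is in place, the lemma follows, and in fact the identification is manifestly functorial in $\mathscr C_1,\mathscr C_2,\mathscr C$, which is what is needed for the applications in Theorem \ref{thm74} (3) and (\ref{form71}).
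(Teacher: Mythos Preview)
The paper does not actually give a proof of this lemma: the statement is followed immediately by a definition, with no argument or even a reference, so there is nothing in the paper to compare your argument against. The lemma is evidently being treated as a standard folklore fact about $A_\infty$ categories (the categorified tensor--hom adjunction), and your proposal is exactly the standard argument one would give to justify it.

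Your sketch is correct in outline. A few small points of precision: the object you call $\mathrm{Coder}(B\mathscr C_2[1],B\mathscr C[1])$ is really the morphism complex of $\frak{Func}(\mathscr C_2,\mathscr C)$, whose elements are pre-natural transformations, i.e.\ collections of maps $B\mathscr C_2[1](c_2,c_2')\to \mathscr C[1](\mathscr G_{ob}(c_2),\mathscr G'_{ob}(c_2'))$ with no compatibility imposed; the differential is the Hochschild-type formula built from the $\frak m_k$ of $\mathscr C_2$ and $\mathscr C$ together with the Taylor coefficients of $\mathscr G$ and $\mathscr G'$. With that in hand, your claim that the $\hat d$-compatibility of $\widehat{\mathscr F}$ on $B\mathscr C_1[1]\widehat\otimes B\mathscr C_2[1]$ matches, summand by summand, the $A_\infty$-functor relation for $\widetilde{\mathscr F}:\mathscr C_1\to\frak{Func}(\mathscr C_2,\mathscr C)$ is the entire content, and it does go through once one partitions the terms according to whether the inserted $\frak m$ acts on the $\mathscr C_1$-inputs, the $\mathscr C_2$-inputs, or the target $\mathscr C$. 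The sign bookkeeping you flag is indeed the only real labor, and there is no conceptual obstruction hiding behind it.
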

\begin{defn}
\begin{enumerate}
\item
Let $\mathscr C$ be a strict filtered $A_{\infty}$ category 
and $c,c' \in \frak{OB}(\mathscr C)$. We say $c$ and $c'$ are 
homotopy equivalent if there exists
$x \in \mathscr C(c,c')$ and $x' \in \mathscr C(c',c)$
such that $\frak m_1(x) = \frak m_1(x') = 0$, 
$\frak m_2(x,x') - {\bf e}_c \in {\rm Im}(\frak m_1)$, 
$\frak m_2(x',x) - {\bf e}_{c'} \in {\rm Im}(\frak m_1)$.
\item
Two strict filtered $A_{\infty}$ functors $\mathscr F_1,\mathscr F_2 :
\mathscr C_1 \to \mathscr C_2$
are said to be homotopy equivalent each other if 
they are homopoty equivalent in the functor category 
$\frak{Funcs}(\mathscr C_1,\mathscr C_2)$ in the sense of 
(1).
\item
A strict filtered $A_{\infty}$ functor $\mathscr F :
\mathscr C_1 \to \mathscr C_2$ is said to be a homotopy 
equivalence if there exists a strict filtered $A_{\infty}$ functor $\mathscr G :
\mathscr C_2 \to \mathscr C_1$ such that the compositions 
$\mathscr F\circ\mathscr G$ and $\mathscr G\circ\mathscr F$ are 
homotopy equivalent to the identity functor.
\end{enumerate}
\end{defn}
Basic results in the story of $A_{\infty}$ category is the 
following two theorems.

\begin{thm}\label{them710}{\rm (Whitehead theorem for $A_{\infty}$
functor)}
Let $\mathscr F :
\mathscr C_1 \to \mathscr C_2$ 
be a strict filtered $A_{\infty}$ functor between strict filtered $A_{\infty}$ categories. It is a homotopy 
equivalence if the following two conditions are satisfied.
\begin{enumerate}
\item 
For any $c' \in \frak{OB}(\mathscr C_2)$ there 
exists $c \in \frak{OB}(\mathscr C_1)$ such that 
$\mathscr F_{ob}(c)$ is homotopy equivalent to $c'$.
\item
For any $c,c' \in \frak{OB}(\mathscr C_1)$ the chain map
$$
\mathscr F_{c,c'} : \mathscr C_1(c,c') \to \mathscr C_2(\mathscr F_{ob}(c),\mathscr F_{ob}(c'))
$$
is a chain homotopy equivalence.
\end{enumerate} 
\end{thm}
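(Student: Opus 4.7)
The plan is to construct an explicit strict filtered $A_{\infty}$ quasi-inverse $\mathscr G:\mathscr C_2\to\mathscr C_1$ by a double induction: outer induction on the number of tensor factors (building $\mathscr G_{k}$ from $\mathscr G_{<k}$), inner induction on the energy filtration (lifting the construction from $\Lambda_0/F^{\lambda_i}\Lambda_0$ to $\Lambda_0/F^{\lambda_{i+1}}\Lambda_0$ using the $G$-gapped structure). Once $\mathscr G$ is built, the homotopy equivalences $\mathscr G\circ\mathscr F\sim\mathrm{id}_{\mathscr C_1}$ and $\mathscr F\circ\mathscr G\sim\mathrm{id}_{\mathscr C_2}$ are constructed by the same pattern applied one categorical level up, inside $\frak{Funcs}(\mathscr C_i,\mathscr C_i)$.

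First I would fix the object part. For each $c'\in\frak{OB}(\mathscr C_2)$, use hypothesis (1) to choose $\mathscr G_{ob}(c')=c\in\frak{OB}(\mathscr C_1)$ together with morphisms $\eta_{c'}\in\mathscr C_2(c',\mathscr F_{ob}(c))$, $\bar\eta_{c'}\in\mathscr C_2(\mathscr F_{ob}(c),c')$ realizing the homotopy equivalence, and chosen degree $-1$ primitives $h_{c'},\bar h_{c'}$ witnessing $\frak m_2(\eta_{c'},\bar\eta_{c'})-{\bf e}_{\mathscr F_{ob}(c)}=\frak m_1(h_{c'})$ etc. The degree $1$ component $\mathscr G_1$ is then defined on $x\in\mathscr C_2[1](c'_1,c'_2)$ by transporting through the equivalences, $\mathscr G_1(x):=\mathscr F^{-1}_{c_1,c_2}\bigl(\frak m_3(\eta_{c'_2},x,\bar\eta_{c'_1})\bigr)$, where $\mathscr F^{-1}_{c_1,c_2}$ is a chosen chain-level homotopy inverse to $\mathscr F_{c_1,c_2}$ supplied by hypothesis (2). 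This is a chain map, and it is a chain homotopy equivalence on morphism complexes because it is a composition of ones.

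Next, I would extend $\mathscr G$ to $\{\mathscr G_k\}_{k\geq 1}$ by standard $A_{\infty}$ obstruction theory. Suppose $\mathscr G_1,\dots,\mathscr G_{k-1}$ have been chosen so that the defining identity $\widehat{\mathscr G}\circ\hat d=\hat d\circ\widehat{\mathscr G}$ holds modulo $B_{\geq k}\mathscr C_2[1]$. On $B_k\mathscr C_2[1]$ the failure $\mathrm{Obs}_k$ is determined by the lower $\mathscr G_j$ and is automatically a $\frak m_1$-cocycle (this is the usual computation; the $A_{\infty}$ relation for the already-constructed data forces it). By hypothesis (2) the chain map $\mathscr F_{c,c'}$ is a quasi-isomorphism, so its cone is acyclic and one may lift $\mathrm{Obs}_k$ to define $\mathscr G_k$ killing the obstruction. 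Simultaneously, inside each step of this $k$-induction, write every map $\mathscr G_j=\sum_i T^{\lambda_i}\mathscr G_j^{(i)}$ using the $G$-gapped structure and carry out the same obstruction argument one filtration level at a time; because the filtration is discrete and complete, the procedure converges to a genuine filtered $A_{\infty}$ functor. Strictness (vanishing on $B_0$) comes for free because $\mathscr F$ is strict and the obstruction at the $B_0$ stage is zero.

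Finally, $\mathscr G\circ\mathscr F$ and $\mathrm{id}_{\mathscr C_1}$ are two strict filtered $A_{\infty}$ functors inducing the same map on cohomology of morphism complexes; by the same inductive obstruction argument — now applied to the problem of constructing a pre-natural transformation of degree $0$ with prescribed boundary — one builds a cycle in $\frak{Funcs}(\mathscr C_1,\mathscr C_1)((\mathscr G\circ\mathscr F),\mathrm{id})$ whose class is invertible, giving a homotopy equivalence in the functor category. The same argument, using (1) to move each $c'\in\frak{OB}(\mathscr C_2)$ to $\mathscr F_{ob}(\mathscr G_{ob}(c'))$, yields $\mathscr F\circ\mathscr G\sim\mathrm{id}_{\mathscr C_2}$. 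The principal obstacle will be book-keeping the combined $k$-induction and $\lambda$-induction and verifying that the obstructions really are cocycles; this is a routine but lengthy exercise in the $A_{\infty}$ relations, for which it is cleaner to encode $\mathscr G$ as a coalgebra morphism of bar complexes and reformulate the whole argument in terms of lifting cochains along the cobar construction, where the obstruction-cocycle identity is immediate from $\hat d\circ\hat d=0$.
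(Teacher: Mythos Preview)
The paper does not prove this theorem: immediately after the statement it writes ``We omit the proof. See \cite{fu4}.'' So there is no proof in the paper to compare against; the reference \cite{fu4} (Fukaya, \emph{Floer homology and mirror symmetry II}) is where the argument lives. Your overall strategy --- build a quasi-inverse $\mathscr G$ by obstruction theory with a double induction on the number of inputs and on the energy filtration, then repeat the argument in the functor category to produce the homotopies --- is the standard approach and is essentially what is carried out in that reference.

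Two small points to tighten. First, your formula for $\mathscr G_1$ uses $\frak m_3(\eta_{c'_2},x,\bar\eta_{c'_1})$; this has the wrong degree (it shifts by $-1$) and is not a chain map. What you want is the twofold $\frak m_2$-composition, e.g.\ $\mathscr F^{-1}_{c_1,c_2}\bigl(\frak m_2(\frak m_2(\bar\eta_{c'_1},x),\eta_{c'_2})\bigr)$ with the conventions of the paper, which is a genuine chain homotopy equivalence. Second, you invoke the $G$-gapped structure for the inner induction on energy, but the theorem as stated does not assume gappedness. In the filtered setting one can still run the argument using completeness of the $T$-adic filtration (solve modulo $F^{\lambda}$ for all $\lambda$ and take the limit), but you should either say this explicitly or note that in the geometric applications of the paper the categories are in fact $G$-gapped so the discrete induction is available.
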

We omit the proof. See \cite{fu4}.
\par
Let $\frak D \subset \frak{OB}(\mathscr C)$.
We define a filtered $A_{\infty}$ category 
whose object set is $\frak D$ and the module of morphisms and structure
operations are obvious restriction to those of $\mathscr C$.
We call such a filtered $A_{\infty}$ category a {\it full subcategory} 
of $\mathcal C$.
\par
We denote by $\mathscr{CH}$ an $A_{\infty}$ category 
whose object is a chain complex and whose module of 
morphisms between two chain complexes is the set of linear maps among them, which is a chain complex.
The boundary operator of this chain complex is the operator induced 
from $\frak m_1$ in an obvious way.
The operation $\frak m_2$ is the composition of linear maps 
(up to sign).
$\frak m_3$ and all higher $\frak m_k$ are all zero.
\begin{defn}
Let $\mathscr C$ be a filtered $A_{\infty}$ category.
We define its {\it opposite category} $\mathscr C^{\rm op}$
as follows.
\begin{enumerate}
\item
$\frak{OB}(\mathscr C^{\rm op}) = \frak{OB}(\mathscr C)$.
\item
$\mathscr C^{\rm op}(c,c') = \mathscr C(c',c)$.
\item
$$
\frak m^{\rm op}_k(x_1,\dots,x_k)
=
(-1)^* \frak m_k(x_k,\dots,x_1)
$$
where 
$* = 1 + \sum_{1 \le i < j \le k} (\deg x_i+1)(\deg x_j+1)$. Here  $\frak m^{\rm op}$ is the structure operation 
of $\mathscr C^{\rm op}$.
\end{enumerate}
\end{defn}
\begin{thm}\label{thm712}{\rm (Yoneda's lemma for $A_{\infty}$
categories)}
Let $\mathscr C$ be a strict filtered $A_{\infty}$ category.
\begin{enumerate}
\item
There exists a filtered $A_{\infty}$ functor
$
\frak{YON} : \mathscr C \to \frak{Funcs}(\mathscr C^{\rm op},\mathscr{CH}).
$
\item
Let $c \in \frak{OB}(\mathscr C) = \frak{OB}(\mathscr C^{\rm op})$.
Then  $\frak{YON}_{ob}(c) : \frak{OB}(\mathscr C) \to 
\frak{OB}(\mathscr{CH})$
is a strict filtered $A_{\infty}$ functor which is defined in the object level by 
$$
c' \mapsto \mathcal C(c,c').
$$
\item
Let $\frak{Rep}(\mathscr C^{\rm op},\mathscr{CH})$ be a full subcategory 
of $\frak{Funcs}(\mathscr C^{\rm op},\mathscr{CH})$, 
whose objects are elements of  $\frak{OB}(\frak{Funcs}(\mathscr C^{\rm op},\mathscr{CH}))$
which is homotopy equivalent 
to the image of $\frak{YON}_{ob} : \frak{OB}(\frak{\mathscr C}) \to 
\frak{OB}(\mathscr C^{\rm op},\mathscr{CH})$.
Then 
$$
\frak{YON} : \mathscr C \to \frak{Rep}(\mathscr C^{\rm op},\mathscr{CH}).
$$
is a homotopy equivalence.
\end{enumerate}
\end{thm}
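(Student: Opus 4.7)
The plan is to build $\frak{YON}$ by direct construction from the structure operations of $\mathscr C$, and then to deduce (3) from the Whitehead theorem (Theorem \ref{them710}) together with the standard $A_\infty$-Yoneda calculation that a pre-natural transformation out of a representable is determined, up to chain homotopy, by its value on the unit.

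First I would construct the object part $\frak{YON}_{ob}$. For each $c\in\frak{OB}(\mathscr C)$ the representable $\frak{YON}_{ob}(c)$ is the strict filtered $A_{\infty}$-functor $\mathscr C^{\rm op} \to \mathscr{CH}$ whose object assignment is $c' \mapsto (\mathscr C(c,c'), \frak m_1)$ (properly identified, via $\mathscr C^{\rm op}(c,c')=\mathscr C(c',c)$, so that the structure maps below type-check), and whose $k$-th structure map sends $x_1\otimes\cdots\otimes x_k \in B_k \mathscr C^{\rm op}[1]$ to the chain map $y \mapsto \pm \frak m_{k+1}(y; x_k,\dots,x_1)$, with signs and order dictated by the definition of $\frak m_k^{\rm op}$. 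The coderivation identity $\hat d^2 = 0$ for this assignment is precisely the $A_\infty$-relation (\ref{formula25}) for $\mathscr C$, and the unit axioms (\ref{formidenty}) for $\mathscr C$ deliver the unit and strictness conditions for $\frak{YON}_{ob}(c)$.

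Next I would define the higher components $\frak{YON}_k$. Given $x_1,\dots,x_k \in \mathscr C[1]$, the image $\frak{YON}_k(x_1,\dots,x_k)$ must be a pre-natural transformation from $\frak{YON}_{ob}(c_0)$ to $\frak{YON}_{ob}(c_k)$; I set its $\ell$-th component, evaluated on $y$ and $w_1,\dots,w_\ell \in \mathscr C^{\rm op}[1]$, to be $\pm \frak m_{k+\ell+1}(y; w_\ell,\dots,w_1,x_1,\dots,x_k)$. That this assembly is a strict filtered $A_{\infty}$-functor---equivalently, that the induced coalgebra map intertwines the bar differentials---is, after this substitution, a direct reading of a single instance of (\ref{formula25}) for $\mathscr C$; the same substitution yields the unit vanishing conditions. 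This completes parts (1) and (2), since the object-level formula is built in by construction.

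For (3) I would invoke Theorem \ref{them710}. Essential surjectivity onto $\frak{Rep}(\mathscr C^{\rm op},\mathscr{CH})$ is the definition of $\frak{Rep}$, so the content reduces to showing that for every pair $c,c'$ the chain map
$$
\frak{YON}_{c,c'} : \mathscr C(c,c') \longrightarrow \frak{Funcs}(\mathscr C^{\rm op},\mathscr{CH})\bigl(\frak{YON}_{ob}(c),\frak{YON}_{ob}(c')\bigr)
$$
is a chain homotopy equivalence. The inverse is evaluation at the unit: a pre-natural transformation $\phi=\{\phi_\ell\}$ is sent to $\phi_0(\mathbf{e}_c) \in \mathscr C(c,c')$. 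One composite is the identity on the nose by (\ref{formidenty}); for the other, one produces an explicit chain homotopy $H$ built from the higher $\phi_\ell$'s together with iterated insertions of $\mathbf{e}_c$ and checks that $\frak m_1 \circ H + H \circ \frak m_1$ equals the difference between the identity and $\frak{YON}\circ\mathrm{ev}_{\mathbf{e}_c}$, using the pre-natural transformation relation and (\ref{formidenty}). This last step---constructing the homotopy and tracking the opposite-category signs---is the only substantive computation in the argument and is the main obstacle; everything else is coderivation bookkeeping or a direct appeal to Theorem \ref{them710}.
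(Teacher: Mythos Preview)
The paper does not give its own proof of this theorem: immediately after the statement it writes ``We omit the proof. See \cite{fu4}.'' So there is nothing in the paper itself to compare against. Your outline is the standard construction one finds in \cite{fu4} (and in \cite{sei}): build the representable functors and the higher Yoneda components directly out of $\frak m_{k+\ell+1}$, verify the functor relations as a rewriting of (\ref{formula25}), and then reduce part (3) to Theorem \ref{them710} by showing full faithfulness via evaluation at the unit. In that sense your approach is exactly the one the paper is pointing to.

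One caution on the details. Your description of the homotopy $H$ for the composite $\frak{YON}\circ\mathrm{ev}_{\mathbf{e}_c}\sim \mathrm{id}$ as ``iterated insertions of $\mathbf{e}_c$'' is too loose: because of the strict unit axiom (\ref{formidenty}), any term with $\mathbf{e}_c$ inserted into a higher $\frak m$ vanishes, so a homotopy built literally that way would be zero. The usual construction of $H$ is rather that its $\ell$-th component applied to $w_1,\dots,w_\ell$ returns (up to sign) the map $y\mapsto \phi_{\ell+1}(y;w_1,\dots,w_\ell)$, i.e.\ one shifts the pre-natural transformation by one slot using the extra module variable $y$; the unit enters only through $\phi_0(\mathbf{e}_c)$ on the other side. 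With that correction the computation goes through, but as written your sketch of $H$ would not produce the required identity.
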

We omit the proof. See \cite{fu4}.
We call $\frak{YON}$ the {\it Yoneda functor}.
We say an element $\frak{Rep}(\mathscr C^{\rm op},\mathscr{CH})$ 
a {\it representable} functor.

\subsection{K\"unneth tri-functor and representability}
\label{representability}

In this subsection we sketch an argument to prove 
Theorem \ref{thm74} using the algebraic framework 
of Subsection \ref{Yoneda}.
Let 
$(L_i,\sigma_i)$ be a pair of Lagrangian submanifold of 
$X_i$ and its $V_i$-relative spin structure
for $i=1,2$
and $(L_{12},\sigma_{12})$ a 
pair of Lagrangian submanifold of 
$(X_1 \times X_2, -\omega_1 \oplus \omega_2)$ and its $\pi_1^*V_1 \oplus \pi_1^*TX_1 \oplus \pi_2^*V_2 $-relative spin structure.
We consider curved filtered $A_{\infty}$ 
algebras $CF(L_i)$ and $CF(L_{12})$.
We assume appropriate transversality or clean-ness of intersection 
(or fiber product) among them.

\begin{prop}\label{prop713}
There exists a 
left $CF(L_1)$, $CF(L_{12})$ and right $CF(L_2)$ filtered $A_{\infty}$ tri-mudule 
$D$ such that as a $\Lambda_0$ module $D$ is
$$
H(\tilde L_1 \times_{X_1} \tilde L_{12} \times_{X_2} \tilde L_2;\Lambda_0)
$$
or its chain model.
\end{prop}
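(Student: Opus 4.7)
My plan is to realize $D$ via moduli spaces of pseudoholomorphic quilted strips, extending the bi-module construction behind Theorem \ref{thm612}. Work with a quilted strip obtained by gluing two half-strips $\R\times[-1,0]$ and $\R\times[0,1]$ along the common seam $\R\times\{0\}$, and consider pairs $(u_1,u_2)$ with $u_1:\R\times[-1,0]\to X_1$ and $u_2:\R\times[0,1]\to X_2$, each pseudoholomorphic for chosen compatible almost complex structures, subject to: (i) outer boundary conditions $u_1(\tau,-1)$ lifts to $\tilde L_1$ and $u_2(\tau,1)$ lifts to $\tilde L_2$; (ii) seam condition $(u_1(\tau,0),u_2(\tau,0))$ lifts to $\tilde L_{12}$; (iii) finite energy, together with asymptotic convergence as $\tau\to\pm\infty$ to points of $\tilde L_1\times_{X_1}\tilde L_{12}\times_{X_2}\tilde L_2$, allowing branch switchings at immersed self-intersections exactly as in Subsection \ref{immersed}. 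Place boundary marked points on $\R\times\{-1\}$ (labelled by inputs from $CF(L_1)$), on the seam $\R\times\{0\}$ (labelled by inputs from $CF(L_{12})$), and on $\R\times\{1\}$ (labelled by inputs from $CF(L_2)$), with switching generators inserted at boundary or seam crossings as in (\ref{immFl}).

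Next, take $D$ to be a de Rham chain model of the triple fiber product $\tilde L_1\times_{X_1}\tilde L_{12}\times_{X_2}\tilde L_2$, completed over $\Lambda_0$ and supplemented with switching summands along the three Lagrangians. Asymptotic evaluation at $\tau=\pm\infty$ yields maps $\mathrm{ev}_{\pm\infty}$ from each moduli space to this fiber product, while evaluation at the interior marked points yields maps to the relevant fiber products used to define $CF(L_1)$, $CF(L_{12})$, $CF(L_2)$. The tri-module structure operations
$$
\frak n\colon D\,\widehat{\otimes}\, CF(L_1)^{\widehat{\otimes} k_1}\,\widehat{\otimes}\,CF(L_{12})^{\widehat{\otimes} k_{12}}\,\widehat{\otimes}\,CF(L_2)^{\widehat{\otimes} k_2}\to D,
$$
with the inputs distributed along the boundary and seam in the order dictated by their geometric positions, are then defined by pulling input forms back via the evaluation maps and integrating against the virtual fundamental chain of the moduli space, exactly as in formula (\ref{form67}). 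Kuranishi structures and CF-perturbations in the framework of \cite{AFOOO1,efl,foootech2} provide transversality; the canonical relative spin structure on the triple fiber product induced from those on $L_1,L_{12},L_2$ provides coherent orientations.

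The tri-module $A_\infty$ relations (a three-algebra version of (\ref{form63})) are then verified by identifying the codimension-one boundary strata of the compactified quilted moduli spaces with the terms appearing on the right-hand side of these relations. The relevant strata are: breaking of the quilted strip along a line $\tau=\mathrm{const}$, giving compositions $\frak n(\frak n(\,\cdot\,))$; strip-breaking or disk-bubbling at the outer boundary $\R\times\{-1\}$, producing the action of the structure maps of $CF(L_1)$; similarly at $\R\times\{1\}$ for $CF(L_2)$; disk-bubbling along the seam $\R\times\{0\}$, producing the action of $CF(L_{12})$; and collision of marked points with self-intersection loci, which is absorbed into the switching formalism of Akaho--Joyce. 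A one-to-one matching of these strata with the terms of the $A_\infty$ tri-module relation then yields the identity.

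The hard part, as usual in this circle of ideas, is a combination of two analytic issues. First, establishing Gromov compactness and constructing a coherent system of Kuranishi structures for quilted moduli spaces whose seam is labelled by an \emph{immersed} Lagrangian with clean self-intersection, so that switching along the seam is handled on the same footing as switching at the outer boundaries; this requires enlarging $D$ by switching generators so that all limit configurations land in the completed chain complex. Second, avoiding strip-shrinking: we deliberately keep the two half-strips of fixed positive width and never let them collide, so no adiabatic limit enters the construction, but independence of the tri-module $D$ (up to homotopy equivalence in the appropriate $\infty$-sense) from the choices of widths, almost complex structures, and perturbations must be proved separately by a parametrized cobordism argument in the spirit of \cite{LL} and \cite{fu7}. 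I expect this independence, rather than the definition of the operations themselves, to be the bulk of the technical work.
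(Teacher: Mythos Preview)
Your approach via quilted strips $\R\times[-1,1]$ with seam condition $L_{12}$ on $t=0$ and outer boundary conditions $L_1,L_2$ on $t=\mp 1$, marked points on all three lines, evaluation maps, and integration against the virtual chain is exactly the paper's construction (Figure~\ref{Figure51} and the formula for $\frak n_{k_1,k_{12},k_2}$ following it). One small over-complication: $D$ itself needs no switching summands, since the asymptotic limits at $\tau\to\pm\infty$ land in the honest fiber product $\tilde L_1\times_{X_1}\tilde L_{12}\times_{X_2}\tilde L_2$; the switching generators live only in the algebra inputs $CF(L_1),CF(L_{12}),CF(L_2)$, which is where your marked-point evaluations already put them.
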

The  notion of tri-module is defined in a similar way as tri-functor
(Definition \ref{defn78}).
More explicitely it gives a series of operators
\begin{equation}\label{form75}
\frak n_{k_1,k_{12},k_2} :
CF(L_1)^{\otimes k_1} \otimes CF(L_{12})^{\otimes k_{12}}
\otimes D \otimes CF(L_{2})^{\otimes k_{2}}
\to 
D
\end{equation}
which satisfies a smilar relation as right module.
We sketch the proof of Proposition \ref{prop713} later in this 
subsection.

\begin{cor}\label{cor14}
If $b_1$ and $b_{12}$ are bounding cochains of $CF(L_1)$ and 
$CF(L_{12})$ respectively then 
$D$ in Proposition \ref{prop713} has a structure of right 
filtered  $A_{\infty}$ module over $CF(L_2)$.
\end{cor}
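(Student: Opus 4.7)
The plan is to deform the tri-module structure of Proposition \ref{prop713} by inserting $b_1$ and $b_{12}$ into the two leftmost tensor factors, exactly parallel to the one-variable deformation recalled before Proposition \ref{prop611}. Concretely, for $y \in D$ and $x_1, \ldots, x_{k} \in CF(L_2)$, I would define
\begin{equation}
\frak n^{b_1, b_{12}}_{k}(y; x_1, \ldots, x_{k}) := \sum_{k_1, k_{12} \geq 0} \frak n_{k_1, k_{12}, k}(b_1^{k_1}; b_{12}^{k_{12}}; y; x_1, \ldots, x_{k}),
\end{equation}
with the convention from (\ref{form5959}) that $b^{\ell}$ denotes $\ell$ consecutive copies. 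Since $b_1, b_{12} \equiv 0 \mod \Lambda_+$ and the operations $\frak n_{k_1, k_{12}, k}$ preserve filtration, every fixed-energy truncation of this sum has only finitely many nonzero terms, so the sum converges in the $T$-adic topology. The maps $\frak n^{b_1,b_{12}}_k$ have total degree $1$ and preserve the filtration, as required.

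Next I would verify the right filtered $A_\infty$ module relation (\ref{form63}) for the $\frak n^{b_1, b_{12}}_*$ over $CF(L_2)$. Expanding both compositions $\frak n^{b_1,b_{12}}(\frak n^{b_1,b_{12}}(\cdots))$ and $\frak n^{b_1,b_{12}}(\cdots, \frak m^{L_2}(\cdots), \cdots)$ by their definitions yields a large sum of terms of three types: (i) those where some $\frak m_\ell$ of $CF(L_1)$ is applied to a consecutive string $b_1^{\ell}$; (ii) those where some $\frak m_\ell$ of $CF(L_{12})$ is applied to a consecutive string $b_{12}^{\ell}$; (iii) those where $\frak m$ acts in the $CF(L_2)$ region or where two consecutive $\frak n$'s combine. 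The terms of type (i) re-group, after pulling $\sum_\ell \frak m_\ell(b_1^\ell)$ outside, into a vanishing contribution by the Maurer-Cartan equation for $b_1$; similarly the type (ii) terms vanish by the Maurer-Cartan equation for $b_{12}$. The surviving type (iii) terms are precisely the two sides of (\ref{form63}) for $\frak n^{b_1, b_{12}}_*$ with the structure operations of $CF(L_2)$, which yields the desired relation.

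The argument is purely algebraic once the tri-module relation supplied by Proposition \ref{prop713} is in hand, so the main obstacle is bookkeeping and sign conventions rather than anything conceptual: one must check that the tri-module axioms have exactly the shape needed so that the two Maurer-Cartan equations cancel the spurious contributions to (\ref{form63}), and track the Koszul signs through the reordering of the inserted $b_1$'s and $b_{12}$'s past the $x_i$'s and $y$. Given the sign conventions fixed in Subsection \ref{Ainfinitycate} and the one-variable versions of this computation already carried out in the analogous deformations of Definition-Lemma preceding (\ref{form5959}) and in \cite{fooobook}, no new analytic input is required; the corollary follows formally from Proposition \ref{prop713} and the Maurer-Cartan property of $b_1$ and $b_{12}$.
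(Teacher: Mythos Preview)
Your proposal is correct and follows exactly the same approach as the paper: define $\frak n_k(y;x_1,\dots,x_k) = \sum_{k_1,k_{12}\ge 0} \frak n_{k_1,k_{12},k}(b_1^{k_1};b_{12}^{k_{12}};y;x_1,\dots,x_k)$ and verify (\ref{form63}). In fact the paper's own proof merely writes down this formula and says ``It is easy to check (\ref{form63}),'' so your explanation of the term-grouping and the role of the two Maurer-Cartan equations is more detailed than what appears in the paper.
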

\begin{proof}
Using (\ref{form75}) we obtain
$$
\frak n_{k} :
D \otimes CF(L_{2})^{\otimes k}
\to 
D
$$
by
$$
\frak n_{k}(y;x_1,\dots,x_k)
=
\sum_{k_1,k_{12}=0}^{\infty}
\frak n_{k_1,k_{12},k}(b_1,\dots,b_1;b_{12},\dots,b_{12};y;x_1,\dots,x_k).
$$
It is easy to check (\ref{form63}).
\end{proof}
Now we consider the case when $L_{2}$ is the geometric 
transformation $L_{1} \times_{X_1} L_{12}$. Then 
$$
D = H(\tilde L_1 \times_{X_1} \tilde L_{12} \times_{X_2} \tilde L_2;\Lambda_0)
= H(\tilde L_{2} \times_{X_2} \tilde L_{2};\Lambda_0). 
$$
The fundamental class of $\tilde L_2$ is an element of $D$, 
which we write ${\bf 1}$.
\begin{lem}\label{lem715}
In the situation of Corollary \ref{cor14} we assume 
that $L_{2}$ is the geometric 
transformation $L_{1} \times_{X_1} L_{12}$.
\par
Then ${\bf 1} \in D$ is a cyclic element of right filtered 
$A_{\infty}$ module $D$.
\end{lem}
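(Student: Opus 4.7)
The plan is to reduce the cyclicity of $\mathbf 1$ to an energy-filtration argument, with the classical (energy-zero) term being identified with the obvious $\Lambda_0$-module isomorphism between $CF(L_2)$ and $D$. First, I will exploit the hypothesis $\tilde L_2=\tilde L_1\times_{X_1}\tilde L_{12}$ to rewrite the underlying space of $D$ as $\tilde L_2\times_{X_2}\tilde L_2$, which under the clean-intersection hypothesis decomposes as the diagonal $\Delta\tilde L_2\cong\tilde L_2$ together with the collection of switching pairs $\{(p,q)\mid i_{L_2}(p)=i_{L_2}(q),\ p\ne q\}$. This gives a tautological identification
\begin{equation*}
D\;\cong\;H(\tilde L_2;\Lambda_0)\oplus\bigoplus_{(p,q)}\Lambda_0(p,q)\;\cong\;CF(L_2)
\end{equation*}
as graded $\Lambda_0$-modules, matching the decomposition of $CF(L_2)$ from Subsection \ref{immersed}. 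Under this identification, the class $\mathbf 1$ is sent to the fundamental class $[\tilde L_2]\in H(\tilde L_2;\Lambda_0)\subset CF(L_2)$.

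Second, I will verify condition (2), namely $\frak n_0(\mathbf 1)\equiv 0\bmod\Lambda_+$. By the defining formula from the proof of Corollary \ref{cor14}, $\frak n_0(\mathbf 1)=\sum_{k_1,k_{12}}\frak n_{k_1,k_{12},0}(b_1^{k_1};b_{12}^{k_{12}};\mathbf 1)$, so every term with $k_1+k_{12}\ge 1$ lies automatically in $\Lambda_+D$ since $b_1,b_{12}\in\Lambda_+$. What remains is the undeformed curvature $\frak n_{0,0,0}(\mathbf 1)$, and the point is that the moduli space defining this operator, restricted to its energy-zero stratum, consists only of constant configurations mapping into $\tilde L_2\times_{X_2}\tilde L_2$; these contribute to the identity/fundamental-class piece of $D$ and a dimension count (together with the fact that $\mathbf 1$ has top degree in the diagonal component and that a curvature term must raise degree by one) forces their contribution to vanish. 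All positive-energy contributions appear with a strictly positive power of $T$.

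Third, I will verify condition (1) by showing that the chain map $x\mapsto\frak n_1(\mathbf 1,x)$ coincides modulo $\Lambda_+$ with the tautological identification of the first paragraph. For $x$ in the diagonal part $H(\tilde L_2;\Lambda_0)$, the energy-zero stratum of the moduli space defining $\frak n_{0,0,1}(\mathbf 1,x)$ consists of constant solutions whose evaluation factors through the cap product of $x$ with the fundamental class, yielding $x$ itself. For $x=(p,q)$ in the switching part, the analogous constant configuration is a length-zero strip sitting at the self-intersection point, sending $(p,q)\in CF(L_2)$ to the generator $(p,q)\in D$. All further terms in $\frak n_1(\mathbf 1,\cdot)$, including those coming from inserting $b_1$'s and $b_{12}$'s and those from positive-energy strata of the $\frak n_{0,0,1}$ moduli space, carry a strictly positive power of $T$. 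Hence modulo $\Lambda_+$ the map $\frak n_1(\mathbf 1,\cdot)$ is the identification of the first paragraph, and a standard successive-approximation argument over the $G$-gapped energy filtration of $\Lambda_0$ (see \cite[Lemma 4.7.4]{fooobook}) upgrades this to a $\Lambda_0$-module isomorphism $CF(L_2)\to D$.

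The main obstacle is the detailed identification of the energy-zero strata of the moduli spaces that define the tri-module operators $\frak n_{k_1,k_{12},k_2}$ of Proposition \ref{prop713}; in particular one must show that the only constant/degenerate configurations contributing to $\frak n_{0,0,1}(\mathbf 1,\cdot)$ and to the curvature $\frak n_{0,0,0}$ are those reproducing the tautological identification and the vanishing required in (2). This depends on the specific geometric setup used to construct the tri-module (presumably holomorphic quilts with three seams, or a matching-condition variant as in \cite{Li}), and checking transversality plus the correct index computation at these constant solutions is where the bulk of the technical work lies. Once that local picture at zero energy is established, the rest of the proof is formal and proceeds by $T$-adic induction exactly as in the proof of \cite[Proposition 3.5]{fu7}.
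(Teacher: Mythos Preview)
The paper does not actually supply a proof of this lemma; it is stated and then immediately combined with Corollary~\ref{cor14} and Proposition~\ref{prop611} to deduce Theorem~\ref{thm74}(1), with details deferred to \cite{efl}. Your argument must therefore stand on its own, and its structure is correct and is the expected one: identify $D\cong CF(L_2)$ as $\Lambda_0$-modules via $\tilde L_1\times_{X_1}\tilde L_{12}\times_{X_2}\tilde L_2=\tilde L_2\times_{X_2}\tilde L_2$, then show that modulo $\Lambda_+$ the map $x\mapsto\frak n_1(\mathbf 1,x)$ is this identification, and conclude by the usual $T$-adic inversion.

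One remark that would tighten your verification of condition~(1): rather than treating the diagonal and switching parts separately, you can observe that under your identification the energy-zero piece of $\frak n_1(\mathbf 1,\cdot)$ coincides with the energy-zero piece of $\frak m_2(\mathbf e_{L_2},\cdot)$ on $CF(L_2)$, because the constant-configuration strata of the two moduli problems agree. The unit axiom (item~(6)(a) in the definition of filtered $A_\infty$ category) then gives the identity at once. Similarly, for condition~(2) the energy-zero part of $\frak n_{0,0,0}(\mathbf 1)$ is just the classical differential applied to the fundamental class $[\tilde L_2]$, which vanishes; your degree-count phrasing is correct but this is the underlying reason.

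Your closing caveat is well placed: the only nontrivial input is the identification of the energy-zero strata of the quilted moduli spaces $\mathcal M_{k_1,k_{12},k_2}(L_1,L_{12},L_2;a,b;0)$ with the corresponding constant-disk strata for $CF(L_2)$, and that is precisely what the construction in \cite{efl} (sketched at the end of Subsection~\ref{representability}) is meant to provide.
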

Theorem \ref{thm74} (1) is a consequence of 
Corollary \ref{cor14}, Lemma \ref{lem715} and 
Proposition \ref{prop611}.
\par
To prove Theorem \ref{thm74} (2)(3) we enhance 
Proposition \ref{prop713} as follows.
\begin{prop}\label{prop716}
There exists a filtered $A_{\infty}$ tri-functor
$$
\aligned
&\frak{Fuks}(X_1 \times X_2, -\omega_1 \oplus \omega_2,
\pi_1^*V_1 \oplus \pi_1^*TX_1 \oplus \pi_2^*V_2 )
\\
&\qquad\qquad\qquad\times
\frak{Fuks}(X_1,\omega_1,V_1) \times \frak{Fuks}(X_2,\omega_2,V_2)^{\rm op}
\to \mathscr{CH}
\endaligned
$$
such that the chain complex associated to 
$L_{12}$, $L_1$, $L_2$ by this tri-functor is $D$ in 
Proposition \ref{prop713}.
\end{prop}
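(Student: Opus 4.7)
The plan is to extend the construction of Proposition \ref{prop713} by letting the three Lagrangians $L_{12}, L_1, L_2$ vary and by inserting morphism-valued marked points on suitable pseudo-holomorphic polygons. Concretely, given tuples $L_{12}^{(0)}, \dots, L_{12}^{(p)}$ in $X_1 \times X_2$, $L_1^{(0)}, \dots, L_1^{(q)}$ in $X_1$, and $L_2^{(0)}, \dots, L_2^{(r)}$ in $X_2$, I would consider moduli spaces of pseudo-holomorphic maps on a domain carrying three kinds of boundary arcs that meet at a distinguished output fiber-product point. The $L_1^{(i)}$-arcs carry boundary conditions in $X_1$, the $L_2^{(j)}$-arcs in $X_2$, and the $L_{12}^{(k)}$-arcs serve as a seam coupling the $X_1$-domain to the $X_2$-domain. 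For the seam I would use one of the three boundary-condition formalisms (i), (ii), (iii) discussed in the remark of Subsection \ref{exist}; methods (i) and (iii) are preferable since (ii) requires embeddedness and monotonicity, whereas the present setting is immersed.

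The tri-functor structure operations are then defined by signed counts, or equivalently integration of pulled-back differential form representatives, over the zero-dimensional strata of these moduli spaces. For a generator $y$ of the module associated to $(L_{12}^{(0)}, L_1^{(0)}, L_2^{(0)})$ and morphism tuples $(x_i)$, $(u_j)$, $(w_k)$ in the three categories, the coefficient in front of an output generator $z$ of the module associated to $(L_{12}^{(p)}, L_1^{(q)}, L_2^{(r)})$ is defined by this count. When $p=q=r=0$ this recovers the tri-module of Proposition \ref{prop713}. Filtration preservation in the sense of Definition \ref{defn78}(2) follows from the positivity of symplectic area, and the unit conditions in Definition \ref{defn78}(4) follow from the standard forgetful-marked-point argument as in \cite{fooobook}.

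The tri-functor relations of Definition \ref{defn78}(3) are verified by the standard boundary-of-moduli-space strategy: compactify the one-dimensional components and identify the codimension-one strata. These strata split into four families matching the four classes of terms in the extended $A_\infty$ relation; polygon bubbles on an $L_1$-segment producing an instance of $\frak m_k^{CF(L_1)}$, analogous bubbles on an $L_2$-segment, bubbles on the $L_{12}$-seam producing $\frak m_k^{CF(L_{12})}$, and splittings of the main domain into two subdomains in which one piece carries the $D$-output and the other a lower-arity instance of $\mathscr F$. Matching these contributions gives the identity.

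The hard part will be analytic. Compactness for the $L_{12}$-seam boundary condition is available along the methods (i) and (iii) through \cite{fu3, Li}, but the full Fredholm package, index formulas, and coherent Kuranishi or de Rham perturbation data for the entire system of moduli spaces indexed by all Lagrangian tuples simultaneously have not been written down in full; producing such simultaneously coherent perturbations in the spirit of \cite{efl, foootech22} is the main technical input. A second subtlety is ruling out unexpected codimension-one strata, in particular any strip-shrinking phenomenon at the seam, so that the geometric boundary matches the algebraic identity exactly rather than producing correction terms that would force an enrichment of the target category.
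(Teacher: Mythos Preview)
Your overall plan is the same as the paper's: use the simple quilt domain $\R_\tau \times [-1,1]_t$ of Figure~\ref{Figure51}, allow the boundary Lagrangians on each of the three lines $t=-1,0,+1$ to change at marked points, and read off the tri-functor structure operations from the resulting moduli spaces. The codimension-one boundary analysis you describe is also the right one.

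There is, however, a genuine confusion in your analytic discussion. The methods (i), (ii), (iii) from the remark in Subsection~\ref{exist} concern how to impose a \emph{Lagrangian boundary condition on an ASD equation} on a 4-manifold; they are gauge-theoretic and belong to Section~\ref{boundary}. Proposition~\ref{prop716} lives entirely in symplectic geometry: the seam condition at $t=0$ is simply $(u_1(\tau,0),u_2(\tau,0)) \in L_{12}$ for pseudoholomorphic maps $u_i:\Sigma_i\to X_i$, exactly as in the proof sketch of Proposition~\ref{prop713}. No mixed ASD/holomorphic equation, degenerate metric, or matching condition in the sense of \cite{fu3,Li} is involved. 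Likewise, the compactness and Fredholm theory you need are those of quilted pseudoholomorphic curves \cite{WW}, not the Gromov--Uhlenbeck compactness of \cite{Li}.

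A second, smaller point: strip shrinking is not a phenomenon that can occur in the compactification of these simple quilts. It arises only when one deliberately sends the width of an interior strip to zero in order to compare a quilt with its geometric composition; that analysis belongs to Theorem~\ref{thm75}, and the whole point of the paper's approach is to avoid it there as well by the algebraic Y-diagram argument of Subsection~\ref{compatibilitycomp}. For Proposition~\ref{prop716} the width is fixed at $1$. The actual analytic subtlety the paper flags (see the remark after the sketch of Proposition~\ref{prop713}) is the compactification of bubbles rooted on the seam line $t=0$, which requires a modified Kuranishi structure as in \cite[Section~12]{efl}; that, and not strip shrinking, is what you should list as the delicate step.
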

By Proposition \ref{prop716}, Lemma \ref{lem79} induces a bifunctor
\begin{equation}\label{form706}
\aligned
&\frak{Fuks}(X_1 \times X_2, -\omega_1 \oplus \omega_2,
\pi_1^*V_1 \oplus \pi_1^*TX_1 \oplus \pi_2^*V_2 )
\\
&
\times
\frak{Fuks}(X_1,\omega_1,V_1)\to
\frak{Funcs}(\frak{Fuks}(X_2,\omega_2,V_2)^{\rm op},\mathscr{CH}). 
\endaligned
\end{equation}
\begin{prop}\label{lem717}
Let $(L_1,\sigma_1,b_1)$ and $(L_{12},\sigma_{12},b_{12})$
be objects of $\frak{Fuks}(X_1,\omega_1,V_1)$ and 
$\frak{Fuks}(X_1 \times X_2, -\omega_1 \oplus \omega_2,
\pi_1^*V_1 \oplus \pi_1^*TX_1 \oplus \pi_2^*V_2 )$,
respectively.
\par
Then the strict 
filtered $A_{\infty}$ functor $:\frak{Fuks}(X_2,\omega_2,V_2)^{\rm op} \to \mathscr{CH}$
obtained by applying (\ref{form706}) to them
is represented by the object $(L_2,\sigma_2,b_2)$.
\par
Here $L_2$ is the geometric composition and 
$b_2$ is obtained by Theorem \ref{thm74} (1).
\end{prop}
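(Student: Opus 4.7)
The plan is to exhibit, for each test object $(L,\sigma,b) \in \frak{Fuks}(X_2,\omega_2,V_2)$, a chain homotopy equivalence between the chain complex assigned by the functor (\ref{form706}) (evaluated at $(L_{12},b_{12})$ and $(L_1,b_1)$ and the variable argument $(L,b)$) and the chain complex $CF((L_2,b_2),(L,b))$ produced by $\frak{YON}(L_2,\sigma_2,b_2)$, and to promote this family of chain equivalences to a homotopy equivalence inside $\frak{Funcs}(\frak{Fuks}(X_2,\omega_2,V_2)^{\rm op},\mathscr{CH})$. The cyclic element ${\bf 1}$ of Lemma \ref{lem715} and the defining equation $d^{b_2}({\bf 1})=0$ from Proposition \ref{prop611} are the pivots.

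First I would build the objectwise comparison map. Applying Proposition \ref{prop716} to the quadruple $(L_{12},L_1,L_2,L)$ produces a tri-module type operator, from which the element ${\bf 1}$ determines a $\Lambda_0$-linear map
\begin{equation}
\Phi_L : CF(L_2,L) \longrightarrow D(L), \qquad x \longmapsto \sum_{k_1,k_{12},k_2,k} \frak n(b_1^{k_1};b_{12}^{k_{12}};{\bf 1},b_2^{k_2};x,b^{k}),
\end{equation}
where $D(L)$ denotes the tri-module obtained in Proposition \ref{prop713} with one module argument replaced by $L$, equipped with the differential deformed by all four bounding cochains. That $\Phi_L$ is a chain map is exactly the combination of the $A_\infty$ tri-module relations with the Maurer--Cartan equations for $b_1,b_{12},b_2,b$ together with $d^{b_2}({\bf 1}) = 0$; the last equation is what kills the insertions of $\frak m_k(b_2,\dots,b_2)$ adjacent to ${\bf 1}$.

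Next I would prove that each $\Phi_L$ is a chain homotopy equivalence by energy filtration. The $T$-adic leading term $\Phi_L \bmod \Lambda_+$ is computed from energy zero configurations, where all pseudoholomorphic disks degenerate to constant maps. Geometrically it is induced by the identification of fiber products
\begin{equation}
\tilde L_1 \times_{X_1}\tilde L_{12}\times_{X_2}\tilde L \;\cong\; \tilde L_2 \times_{X_2}\tilde L,
\end{equation}
coming from the definition $L_2 = L_1\times_{X_1}L_{12}$; this is an isomorphism of chain models. Standard $T$-adic completeness then upgrades this classical isomorphism to a quasi-isomorphism of filtered complexes over $\Lambda_0$.

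Then I would upgrade $\{\Phi_L\}_L$ to a natural transformation of strict filtered $A_\infty$ functors $\Phi : \frak{YON}(L_2,\sigma_2,b_2) \Rightarrow F_{(L_{12},b_{12}),(L_1,b_1)}$, whose components of order $k$ are obtained from the tri-functor of Proposition \ref{prop716} by inserting ${\bf 1}$ on the $L_2$-strand and a string of $k$ morphisms on the varying $L$-strand. The coherence equations for $\Phi$ reduce, after bookkeeping of signs, to the tri-module relations plus the Maurer--Cartan equations for $b_1,b_{12},b_2,b$ together with the cyclic equation $d^{b_2}({\bf 1})=0$. Finally, applying Theorem \ref{them710} inside $\frak{Funcs}(\frak{Fuks}(X_2,\omega_2,V_2)^{\rm op},\mathscr{CH})$ to the natural transformation $\Phi$, whose objectwise components are the homotopy equivalences $\Phi_L$ established above, yields that $F_{(L_{12},b_{12}),(L_1,b_1)}$ is homotopy equivalent to $\frak{YON}(L_2,\sigma_2,b_2)$, i.e.\ lies in $\frak{Rep}(\frak{Fuks}(X_2,\omega_2,V_2)^{\rm op},\mathscr{CH})$, which is the asserted representability. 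The hardest step will be the third: organizing the higher components of $\Phi$ and checking the $A_\infty$ natural transformation relations, because one must simultaneously track the tri-module relations, the Maurer--Cartan equations for the three ambient bounding cochains, and the delicate cyclic equation for ${\bf 1}$; everything hinges on the fact that these four pieces of data were constructed compatibly via Proposition \ref{prop611}.
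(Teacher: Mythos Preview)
Your strategy is sound in outline but differs from the route the paper indicates. The paper's proof is geometric: it uses a moduli space of pseudoholomorphic quilts on a domain that is a variant of the Lekili--Lipyanskiy $Y$-diagram (Figure~\ref{Figure93}), the direct analogue of the construction in Subsection~\ref{gluing}, where the comparison chain map and all its higher components are produced from a single moduli space and the required relations come from its boundary strata. You instead assemble the comparison algebraically from the tri-functor of Proposition~\ref{prop716} together with the cyclic element ${\bf 1}$ and the equation $d^{b_2}({\bf 1})=0$. Both routes rest on the same energy-filtration argument identifying the leading term with the geometric fiber-product isomorphism $\tilde L_1\times_{X_1}\tilde L_{12}\times_{X_2}\tilde L\cong\tilde L_2\times_{X_2}\tilde L$.

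Two points in your argument need tightening. First, your formula for $\Phi_L$ conflates the tri-\emph{module} operations of Proposition~\ref{prop713} (single $D$-slot, output in the same $D$) with the morphism part of the tri-\emph{functor} in the $\frak{Fuks}(X_2)^{\rm op}$ direction, which is what actually carries ${\bf 1}\in D(L_2)$ to $D(L)$ along $x\in CF(L_2,L)$. The clean way to say what you want is: the enriched Yoneda correspondence identifies closed morphisms $\frak{YON}(L_2)\to F$ in $\frak{Funcs}(\frak{Fuks}(X_2)^{\rm op},\mathscr{CH})$ with $\frak m_1$-closed elements of $F(L_2)=D(L_2)$, and under this correspondence ${\bf 1}$ gives $\Phi$, with closedness exactly $d^{b_2}({\bf 1})=0$. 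Second, Theorem~\ref{them710} is the wrong tool for the last step: it tells you when an $A_\infty$ \emph{functor} is a homotopy equivalence, not when a \emph{morphism} in a functor category is invertible. What you need is the standard fact that in a strict filtered $A_\infty$ category a closed morphism whose $\frak m_1$-cohomology class is invertible (here, whose objectwise components $\Phi_L$ are chain homotopy equivalences) is a homotopy equivalence.

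The $Y$-diagram approach buys you all higher components of $\Phi$ and their coherence relations from one moduli space, so you need not separately invoke the enriched Yoneda lemma for filtered $A_\infty$ categories. Your algebraic route is more transparent once that lemma is in hand, making visible that representability is governed precisely by the pair $({\bf 1},\,d^{b_2}{\bf 1}=0)$ already produced in Proposition~\ref{prop611} and Lemma~\ref{lem715}.
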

The proof is similar to the discussion in the next section 
using a diagram similar to the $Y$ diagram.
See \cite{efl}.
\par
By Proposition \ref{lem717} we obtain a 
filtered $A_{\infty}$ bi-functor
$$
\aligned
&\frak{Fuks}(X_1 \times X_2, -\omega_1 \oplus \omega_2,
\pi_1^*V_1 \oplus \pi_1^*TX_1 \oplus \pi_2^*V_2 )
\\
&
\times
\frak{Fuks}(X_1,\omega_1,V_1)\to
\frak{Rep}(\frak{Fuks}(X_2,\omega_2,V_2)^{\rm op},\mathscr{CH}). 
\endaligned
$$
Therefore we use Theorem \ref{thm712} (3) and 
compose homotopy inverse to the Yoneda functor 
to obtain desired filtered $A_{\infty}$
functor in Theorem \ref{thm74}.
\par
We finally sketch a proof of Proposition \ref{prop713}.
We use the moduli space of objects drawn in the next Figure
\ref{Figure51}. 

\begin{figure}[h]
\centering
\includegraphics[scale=0.35]{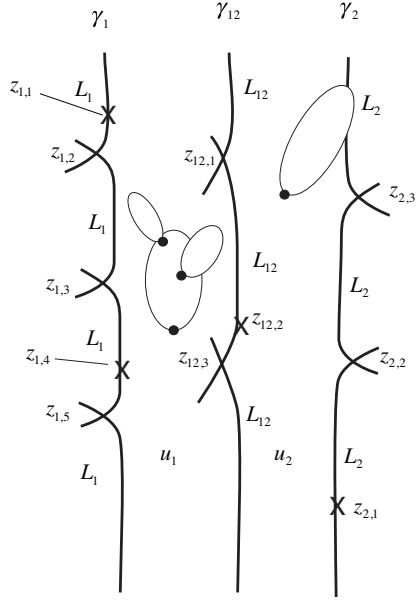}
\caption{Moduli space of simple quilt}
\label{Figure51}
\end{figure}

Here the source curve $\Sigma$ is the domain 
$\R_{\tau} \times [-1,1]_t$ plus possibly some sphere
bubbles. We divide $\Sigma$ into two parts.
The first one $\Sigma_1$ is the union of $\R_{\tau} \times [-1,0]_t$ 
and sphere bubbles 
rooted on it and the second one $\Sigma_2$ is the union of $\R_{\tau} \times [0,1]_t$ 
and sphere bubbles 
rooted on it.
(We require the sphere bubbles are not rooted on the part $t=-1,0,1$.)
The map is a combination of $u_1 : \Sigma_1 \to X_1$ and 
$u_2 : \Sigma_2 \to X_2$.
We include three kinds of marked points
$z_{1,1},\dots,z_{1,k_1} \in \R_{\tau} \times \{-1\}$, 
$z_{12,1},\dots,z_{12,k_{12}} \in \R_{\tau} \times \{0\}$, 
$z_{2,1},\dots,z_{2,k_2} \in \R_{\tau} \times \{1\}$.
\par
We require the following boundary conditions:
\begin{enumerate}
\item 
$u_1(\tau,-1) \in L_1$. 
\item
$u_2(\tau,+1) \in L_2$.
\item
$(u_{1}(\tau,0),u_{2}(\tau,0)) \in L_{12}$.
\item
$$
\lim_{\tau\to-\infty} (u_1(\tau,t_1),u_2(\tau,t_2))
= a \in \tilde L_1 \times_{X_1} \tilde L_{12} \times_{X_2}
\tilde L_2.
$$
\item
$$
\lim_{\tau\to+\infty} (u_1(\tau,t_1),u_2(\tau,t_2))
= b \in \tilde L_1 \times_{X_1} \tilde L_{12} \times_{X_2}
\tilde L_2.
$$
\end{enumerate}
Moreover we assume
\begin{enumerate}
\item[(6)]
$$
\int_{\Sigma_1} u_1^*\omega_1 + \int_{\Sigma_2} u_2^*\omega_2
= E  < \infty.
$$
\end{enumerate}
We denote the moduli space of such objects by 
$\mathcal M_{k_1,k_{12},k_2}(L_1,L_{12},L_2;a,b;E)$.
It comes with evaluation maps:
$$
{\rm ev} = ({\rm ev}_1,{\rm ev}_{12},{\rm ev}_2) : 
\mathcal M_{k_1,k_{12},k_2}(L_1,L_{12},L_2;a,b;E)
\to \hat L_1^{k_1} \times \hat L_{12}^{k_{12}}
\times \hat L_2^{k_2},
$$
where 
$$
\hat L_1 = \tilde L_1 \times_{X_1} \tilde L_1.
$$
$\hat L_2$ and $\hat L_{12}$ are defined in the same way.
\par
Let $h_{i,j}$ $(j=1,\dots,k_i)$ be differential forms on $\hat L_i$ and 
$h_{12,j}$ $(j=1,\dots,k_{12})$ differential forms on $\hat L_{12}$.
Then we define the structure operations (\ref{form75}) of the tri-module $D$ by
$$
\aligned
&\frak n_{k_1,k_{12},k_2}(\vec h_1;\vec h_{12};[a];\vec h_2) \\
&=
\sum_{E,b} T^E [b] \int_{\mathcal M_{k_1,k_{12},k_2}(L_1,L_{12},L_2;a,b;E)}{\rm ev}^*(\vec h_1\wedge \vec h_{12} \wedge \vec h_2).
\endaligned
$$
Here $\vec h_1 = (h_{1,1},\dots,h_{1,k_1}) \in CF(L_1)^{\otimes k_1}$.
The notations $\vec h_{12}$ and $\vec h_2$ are defined in a similar way.
\par
We can show that it satisfies the required relation by studying the 
boundary of our moduli space $\mathcal M_{k_1,k_{12},k_2}(L_1,L_{12},L_2;a,b;E)$ 
and using Stokes' theorem.
\par\smallskip
We remark that our moduli space  $\mathcal M_{k_1,k_{12},k_2}(L_1,L_{12},L_2;a,b;E)$
is similar to one 
we use to define Floer homology group
$
HF(L_1\times L_2,L_{12})
$
in the product space $X_1 \times -X_2$.
For example in case $L_1,L_2,L_{12}$ are embedded and monotone with minimal 
Maslov number $>2 $ we may use the case $k_1 = k_{12} = k_2 = 0$ 
to obtain
$$
\frak n_{0,0,0} : D \to D
$$
and $D$ is a free $\Lambda_{0}$ module with basis
$$
L_1 \times_{X_1} L_{12} \times_{X_2} L_2
=
(L_1 \times L_2) \cap L_{12}.
$$
So $D$ is also the underlying vector space of the chain complex 
calculating Floer homology $HF(L_1\times L_2,L_{12})$.
Moreover the operation $\frak n_{0,0,0}$ coincides with Floer's boundary operator.
\begin{rem}
To obtain appropriate Kuranishi structure 
we need to slightly change the way to compactify the 
bubble on the line $t=0$.
(See \cite[Section 12]{efl}.)
\par
In case $k_i$ or $k_{12}$ are nonzero
there is some technical difference 
between the moduli space $\mathcal M_{k_1,k_{12},k_2}(L_1,L_{12},L_2;a,b;E)$
and the moduli space we use to 
define $CF(L_1\times L_2)$-$CF(L_{12})$ bimodule structure 
on $D$. 
\end{rem}
Wehrheim-Woodward \cite{WW} studied the case of monotone 
Lagrangian submanifolds using the moduli space
$\mathcal M_{0,0,0}(L_1,L_{12},L_2;a,b;E)$.
They also generalize this moduli space to the case 
where the domain is divided 
into several (not necessary two) domains which are sent to 
various symplectic manifolds by a pseudoholomorphic curve.
They call such objects pseudoholomorphic quilt.
Here we use only the simplest case of  pseudoholomorphic quilt.

\subsection{Y diagram and compatibility of compositions}
\label{compatibilitycomp}

In this subsection we give a brief explanation of
the proof of Theorem \ref{thm75}.
\par
The proof of Theorem \ref{thm75} (1) is similar to
one of  Theorem \ref{thm74} (1).
(In fact Theorem \ref{thm74} (1) can be regarded as a 
special case of Theorem \ref{thm75} (1) where $L_3$ is a point.)
We construct a tri-module $D$ over 
$CF(L_{12})$, $CF(L_{23})$, $CF(L_{31})$ and use it in the same 
way as the last subsection.
The moduli space we use for this purpose is obtained 
by replacing Figure
\ref{Figure51} by the next Figure \ref{Figure82}.

\begin{figure}[h]
\centering
\includegraphics[scale=0.3]{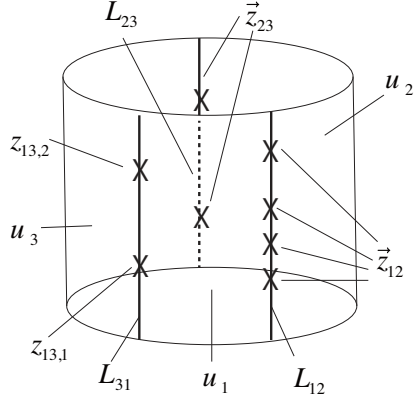}
\caption{Composition of unobstructed correspondences}
\label{Figure82}
\end{figure}

Here the source is a circular cylinder divided into three parts.
The map is a combination of three maps $u_1,u_2,u_3$ which 
sends those three parts to $X_1$, $X_2$ and $X_3$ respectively.
We use $L_{12}$, $L_{23}$, $L_{31}$ to set the boundary conditions 
at the lines where two of those three subdomains intersect.
The underlying vector space of $D$ is the cohomology group
of the triple fiber product
$$
\left\{(x,y,z) \in \tilde L_{12} \times \tilde L_{23} \times \tilde L_{31} 
\,\, \left\vert\,\,
\aligned
&\pi_1(i_{L_{12}}(x)) = \pi_1(i_{L_{31}}(z)), \\
&\pi_2(i_{L_{12}}(x)) = \pi_2(i_{L_{23}}(z)), \\
&\pi_3(i_{L_{13}}(x)) = \pi_3(i_{L_{23}}(z)).
\endaligned
\right\}\right.
$$
Here $\pi_i : X_i \times X_j \to X_i$ and $\pi_j  : X_i \times X_j \to X_j$
are projections.
\par
We put elements of this triple fiber product at the part $\tau \to \pm \infty$ 
and use it as the 
asymptotic boundary condition.
\par
The proof of Theorem \ref{thm75} (2)(3) is based on the
moduli space introduced by 
Lekili-Lipyanskiy \cite{LL} which is drawn in the next Figure
\ref{Figure93}.

\begin{figure}[h]
\centering
\includegraphics[scale=0.3]{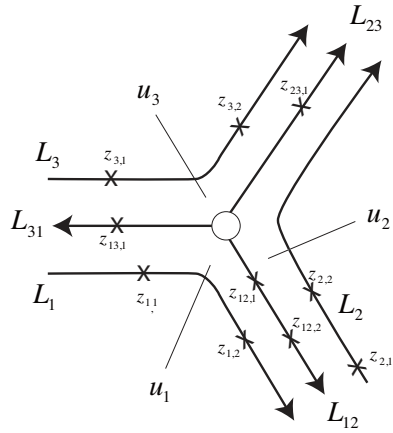}
\caption{Lekili-Lipyanskiy's Y-diagram}
\label{Figure93}
\end{figure}

Here the source is a domain in $\C$ which is divided into three parts.
The maps $u_1,u_2,u_3$ send each of those three parts to $X_1$, $X_2$, $X_3$,
respectively
and are pseudoholomorphic.
Note in our situation of Theorem \ref{thm75} (2)(3)
we are given 6 Lagrangian submanifolds $L_{12}$, $L_{23}$, $L_{31}$,
$L_1$, $L_2$, $L_3$, where $L_i \subset X_i$ and 
$L_{ij} \subset X_i \times X_j$.
We use $L_{ij}$ to set the boundary condition at the curves where
two subdomains intersect each other. We use $L_i$ to set the boundary 
condition at the boundary of our domain.
We have six curves and so we put six kinds of marked points.
The evaluation maps go to the products of $\hat L_i$'s and 
$\hat L_{ij}$'s.
\par
Note our domain has 4 ends. Three of them (left, upper right and lower right) 
are similar to the ends appearing in Figure \ref{Figure51}
and the fourth one which is a neighborhood of the white circle 
in the middle of the domain is similar to the end 
appearing in Figure \ref{Figure93}.
\par 
Thus our moduli space defines a map 
which interpolates tensor products of $CF(L_i)$, $CF(L_{ij})$ 
and tri-modules which we used to prove Theorem \ref{thm75} (1) and 
Theorem \ref{thm74} (1).
\par
The $\Lambda_0$ linear maps we thus obtain looks rather cumbersome.
However when we see them carefully we find that it is exactly 
the maps we need to show the homotopy commutativity of the 
diagrams appearing in Theorem \ref{thm75} (2)(3).
See \cite{efl} for detail.
\par
We finally mention that Theorem \ref{thm76} is proved by 
using the objects drawn in the next Figure \ref{Figure11-1}.

\begin{figure}[h]
\centering
\includegraphics[scale=0.4]{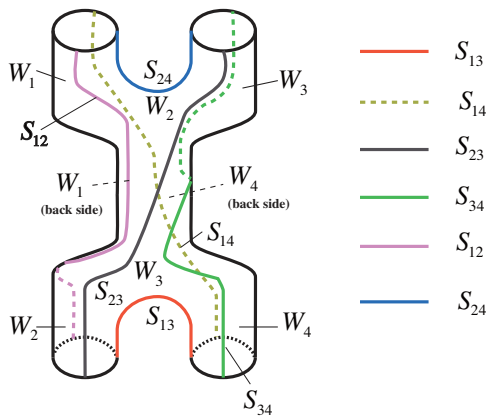}
\caption{Associativity of compositions}
\label{Figure11-1}
\end{figure}

\section{Categorification of Donaldson-Floer theory}
\label{DFcategori}

We can enhance the construction of Section \ref{boundary} to the topological 
filed theory style results and clarify its relation to 
Lagrangian correspondence.

For oriented two manifold $\Sigma$ we always consider 
an $SO(3)$ bundle $E_{\Sigma}$ on it such that 
$w_2(E_{\Sigma})$ is the fundamental class.
Let $R(\Sigma,E_{\Sigma})$
be the moduli space of the
gauge equivalence classes of the flat connections 
of $E_{\Sigma}$. It is a symplectic manifolds 
and is monotone with minimal Chern number 2.
\par
In this subsection we work over $\Z_2$ coefficient.
\begin{defn}
We consider the strict filtered $A_{\infty}$ category
$\frak{Fuk}(\Sigma)$
as follows.
\begin{enumerate}
\item
The object of $\frak{Fuk}(\Sigma)$ is 
a pair $(L,b)$. Here $L$ is an immersed Lagrangian 
submanifold of $R(\Sigma,E_{\Sigma})$, 
which is monotone in the weak sense (Definition \ref{defn4111}) 
and has minimal Maslov number divisible by $4$.
$b$ is its bounding cochain which is supported in the 
switching components.
(Here we consider the 
$\Lambda_0^{\Z_2}$ filtered $A_{\infty}$ algebra
associated to $L$.)
\item
The module of morphisms is $CF((L_1,b_1),(L_2,b_2))$
which is ($\Z_2$ version) of the chain complex introduced 
in Subsection \ref{subsec:Laggeneral}.
\item
The structure operations $\frak m_k$ is 
defined as in Subsection \ref{immersed}.
\end{enumerate}
We remark that we can easily prove the following which is expected by 
topological field theory.
\begin{equation}
\frak{Fuk}(\Sigma \sqcup \Sigma')
=
\frak{Fuk}(\Sigma)
\otimes
\frak{Fuk}(\Sigma').
\end{equation}
\begin{equation}
\frak{Fuk}(-\Sigma)
=
\frak{Fuk}(\Sigma)^{\rm op}.
\end{equation}
\end{defn}

Let $M$  be a 3 manifold with boundary
$\Sigma = \partial M$. We consider an  
$SO(3)$ bundle $E_M$ on $M$ such that 
the restriction of $E_M$ to 
$\Sigma = \partial M$ is $E_{\Sigma}$.
\footnote{When $M$ has a connected component which does 
not intersect with boundary, we require that the $SO(3)$ bundle 
$E_M$ is nontrivial on such a component.}
We assume that $\partial M$ is 
divided into $\partial_- M \sqcup \partial_+ M$
such that a neighborhood of $\partial_- M$ 
(resp. a neighborhood of $\partial_+ M$)
in $M$ is identified 
with $\partial_- M \times [-\infty,0)$
(resp. $\partial_+ M \times [0,+\infty)$)
as oriented manifold.
\par
By Theorem \ref{them61}, the 
(appropriately perturbed) moduli space of 
flat connections 
$R(M;E_{M})$ is unobstructed.
Namley we have a bounding cochain 
$b_M$ of the $\Lambda_{0}^{\Z_2}$ linear filtered 
$A_{\infty}$ algebra associated to $R(M;E_{M})
\subset R(\Sigma_-;E_{\Sigma_-})
\times -R(\Sigma_+;E_{\Sigma_+})$.

\begin{defn}\label{defn82}
Suppose $\Sigma_-,\Sigma_+ \ne \emptyset$.
We define the strict filtered $A_{\infty}$
functor
\begin{equation}
\mathcal{HF}_{(M,E_M)} : \frak{Fuk}(\Sigma_-)
\to \frak{Fuk}(\Sigma_+)
\end{equation}
as the strict filtered $A_{\infty}$
functor associated to the unobstructed immersed
Lagrangian correspondence $(R(M;E_{M}),b_M)$
by Theorem \ref{thm74}.
\end{defn}
We can prove the following compatibility 
result.
We consider $(M_i,E_{M_i})$ ($i=1,2$) as above.
We suppose $\partial_+M_1 \cong \partial_-M_2$.
We glue $M_1$ and $M_2$ along 
$\partial_+M_1 \cong \partial_-M_2$
to obtain $M_3 = M_1 \# M_3$ 
and an $SO(3)$ bundle $E_{M_3}$ on it.
Note 
$$
\partial_- M_3 = \partial_- M_1,
\qquad
\partial_+ M_3 = \partial_+ M_2.
$$
\begin{thm}\label{tjhem83}
The filtered $A_{\infty}$ functor 
$\mathcal{HF}_{(M_3,E_{M_3})} : \frak{Fuk}(\partial_- M_1)
\to \frak{Fuk}(\partial_+ M_2)$
associated to $(M_3,E_{M_3})$ by 
Definition \ref{defn82} is homotopy equivalent 
to the composition 
$$
\mathcal{HF}_{(M_2,E_{M_2})} \circ \mathcal{HF}_{(M_1,E_{M_1})}.
$$
Here $\mathcal{HF}_{(M_2,E_{M_2})}$, 
$\mathcal{HF}_{(M_1,E_{M_1})}$ are 
filtered $A_{\infty}$ functors associated to 
$(M_2,E_{M_2})$ and $(M_1,E_{M_1})$ by 
Definition \ref{defn82} and 
their composition is defined by Theorem \ref{thm75} (1).
\end{thm}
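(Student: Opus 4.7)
The plan is to reduce the statement to a direct application of Theorem~\ref{thm75}~(2), after identifying the unobstructed Lagrangian correspondence $(R(M_3;E_{M_3}),b_{M_3})$ attached to the glued 3--manifold with the geometric composition of the correspondences attached to $(M_1,E_{M_1})$ and $(M_2,E_{M_2})$. Write $\Sigma$ for the gluing surface, so that $\Sigma\cong\partial_+M_1\cong -\partial_-M_2$, and put $L_{12}:=R(M_1;E_{M_1})$ and $L_{23}:=R(M_2;E_{M_2})$, regarded as immersed Lagrangian correspondences in $R(\partial_-M_1;E)\times -R(\Sigma;E_\Sigma)$ and $R(\Sigma;E_\Sigma)\times -R(\partial_+M_2;E)$ respectively.

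First I would check the identification of the underlying Lagrangians. Since a flat $SO(3)$--connection on $M_3$ is determined by a pair of flat connections on $M_1$ and $M_2$ whose restrictions to $\Sigma$ are gauge equivalent, the restriction maps give a canonical map of immersed manifolds
\[
R(M_3;E_{M_3})\;\longrightarrow\; L_{12}\times_{R(\Sigma;E_\Sigma)} L_{23}.
\]
Under Assumption~\ref{ass61}, after the standard perturbation used in Section~\ref{boundary} this becomes a diffeomorphism of immersed Lagrangian correspondences from $R(\partial_-M_1;E)$ to $R(\partial_+M_2;E)$. In other words, forgetting bounding cochains, the correspondence attached to $(M_3,E_{M_3})$ is the geometric composition $L_{12}\times_{R(\Sigma;E_\Sigma)} L_{23}$.

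Second, I would match the bounding cochains. Let $b_{13}$ denote the bounding cochain on $L_{12}\times_{R(\Sigma;E_\Sigma)} L_{23}$ produced from $(b_{M_1},b_{M_2})$ by Theorem~\ref{thm75}~(1), and recall from Proposition~\ref{prop611} that both $b_{M_3}$ and $b_{13}$ are uniquely determined by a cyclic-element equation $d^{b}(\mathbf{1})=0$ for a specific right filtered $A_\infty$ module structure on a fiber-product chain complex. To prove $b_{M_3}\sim b_{13}$, the plan is to stretch the neck along $\Sigma$ inside $M_3$: replace a collar of $\Sigma$ by $\Sigma\times[-T,T]$ and study the family, parametrized by $T\in[T_0,\infty]$, of ASD moduli spaces on $M_3\times\R_\tau$ defining $b_{M_3}$ in Subsections~\ref{cyclic}--\ref{exist}. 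In the limit $T\to\infty$, an adiabatic/neck-stretching analysis of Dostoglou--Salamon type (Theorem~\ref{dostsala}), combined with the compactness and removable-singularity results cited at the end of Subsection~\ref{exist}, decomposes each ASD configuration into a pair of ASD pieces on $M_1\times\R_\tau$ and $M_2\times\R_\tau$ joined by a pseudo-holomorphic strip in $R(\Sigma;E_\Sigma)$; these are precisely the moduli spaces that enter the tri-module structure of Proposition~\ref{prop713} and hence the construction of $b_{13}$ in Theorem~\ref{thm75}~(1). The parameterized moduli space over $T\in[T_0,\infty]$ then yields the required gauge equivalence $b_{M_3}\sim b_{13}$. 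With this identification in hand, Theorem~\ref{thm75}~(2) applied to $L_{12},L_{23}$ gives
\[
\mathcal{HF}_{(M_3,E_{M_3})} \;\simeq\; \frak W_{L_{12}\times_{R(\Sigma;E_\Sigma)} L_{23}} \;\simeq\; \frak W_{L_{23}}\circ \frak W_{L_{12}} \;=\; \mathcal{HF}_{(M_2,E_{M_2})}\circ \mathcal{HF}_{(M_1,E_{M_1})}.
\]

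The hard step will be the neck-stretching analysis used to identify $b_{M_3}$ with $b_{13}$: one must control bubbling on the switching components along the collar $\Sigma\times[-T,T]$, prove a bijective correspondence (at the level of zero-dimensional moduli spaces) between broken limits and the relevant fiber products of strips in $R(\Sigma;E_\Sigma)$ with ASD pieces on $M_i\times\R_\tau$, and match Fredholm indices against the cyclic-element characterization of Proposition~\ref{prop611}. An alternative route that partly sidesteps this analysis is a Yoneda-type argument: pair both candidate functors against every test object $L\in\frak{OB}(\frak{Fuk}(\partial_-M_1))$ realized by an unobstructed 3--manifold capping the left end, use Theorem~\ref{thm64} to identify both resulting chain complexes with the Instanton Floer complex of the corresponding closed 3--manifold, and then invoke Theorem~\ref{them710} to lift the object-level equivalence to a homotopy equivalence of filtered $A_\infty$ functors.
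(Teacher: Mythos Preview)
Your proposal takes a genuinely different route from the paper, and its central step has a real gap.

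\textbf{What the paper does.} The paper does \emph{not} reduce Theorem~\ref{tjhem83} to the purely symplectic Theorem~\ref{thm75}~(2). Instead it builds a gauge-theoretic analogue of the Lekili--Lipyanskiy $Y$-diagram directly: a 4-manifold $X$ with three ends $M_1\times(-\infty,0]$, $M_2\times(-\infty,0]$, $M_3\times[0,\infty)$ and three boundary components $\partial_\pm M_i\times\R$, to each of which a planar domain $\Omega_i$ is attached. One then studies a mixed moduli space of triples $(A,u_1,u_2,u_3)$ consisting of an ASD connection $A$ on $X$ and pseudoholomorphic maps $u_i:\Omega_i\to R(\partial_\pm M_i)$ satisfying matching conditions along the dotted lines and Lagrangian boundary conditions given by $R(M_i;E_{M_i})$ on $\partial\Omega_i$. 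The sliding ends at the three $M_i$-ends reproduce exactly the moduli spaces defining $b_{M_1},b_{M_2},b_{M_3}$, so this single cobordism-type moduli space furnishes the relations among the three bounding cochains needed for the homotopy of functors. No adiabatic limit along $\Sigma$ is taken.

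\textbf{The gap in your main argument.} Your key step is to identify $b_{M_3}$ with the symplectically constructed $b_{13}$ by stretching the neck along $\Sigma$ inside $M_3\times\R_\tau$. This is precisely the kind of adiabatic/degeneration analysis the paper is designed to avoid (see the abstract and the remarks in Subsection~\ref{analysisbounding}, where the analogous direct analytic description of $b_M$ is stated only as a conjecture the author cannot prove). Moreover, the limit you describe does not obviously match either side: after stretching, the pieces on $M_i\times\R_\tau$ acquire a new boundary $\Sigma\times\R_\tau$ with no prescribed Lagrangian condition, whereas the moduli spaces defining $b_{M_i}$ use the Lagrangian $R(M_i;E_{M_i})$ on \emph{all} of $\partial M_i\times\R_\tau$; and $b_{13}$ is built from pseudoholomorphic quilts (Figure~\ref{Figure82}), not from ASD data at all. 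Bridging these requires exactly the Dostoglou--Salamon type correspondence in a setting (manifolds with corners, Lagrangian boundary conditions, immersed Lagrangians with switching) where it has not been established.

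\textbf{On your alternative.} The Yoneda-style argument also does not go through as stated: Theorem~\ref{them710} requires testing against \emph{all} objects of $\frak{Fuk}(\partial_-M_1)$, not just those realized by $3$-manifolds capping the left end, and there is no reason every (immersed, weakly monotone) Lagrangian arises that way. In addition, Theorem~\ref{thm64} gives isomorphisms of homology groups, not the chain-level natural transformations needed to produce a homotopy of filtered $A_\infty$ functors.
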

\begin{defn}\label{defn8484}
Suppose $\Sigma_- = \emptyset,\Sigma_+ \ne \emptyset$.
Then we define $\mathcal{HF}_{(M,E_M)}$
as the object $(R(M,E_M),b_M)$ of 
$\frak{Fuk}(\Sigma_+)$.
\par
Suppose $\Sigma_- \ne \empty,\Sigma_+ = \emptyset$.
Then we define $\mathcal{HF}_{(M,E_M)}$
as the filtered $A_{\infty}$ functor
$: \frak{Fuk}(\Sigma_-) \to \mathcal{CH}$
which is represented by the object
$(R(-M,E_{-M}),b_{-M})$ of $\frak{Fuk}(\Sigma_-)^{\rm op}$.
\par
Suppose $\Sigma_- = \Sigma_+ = \emptyset$.
Then we define $\mathcal{HF}_{(M,E_M)}$ 
as its Floer homology group as in Definition \ref{defn16}.
\end{defn}
We can generalize Theorem \ref{tjhem83}
appropriately including the situation of Definition \ref{defn8484}.
Since this generalization is straightforward 
we omit it.
\par
The proof of Theorem \ref{tjhem83} uses the following 
Figure \ref{Figure62}.
$X$ in the figure is a 4 manifold. $X$ has 3 ends and 3 boundary 
components. The ends are identified with
\begin{equation}\label{form8383}
M_1 \times (-\infty,0],
\qquad M_2 \times (-\infty,0].
\qquad
M_3 \times [0,+\infty,0).
\end{equation}
The boundary (which are drawn by dotted lines) are 
identifies with
$$
\partial_- M_1 \times \R
= \partial_- M_2 \times \R,
\quad
\partial_+ M_1 \times \R
= 
\partial_- M_2 \times \R
\quad
\partial_+ M_2 \times \R
= 
\partial_- M_3 \times \R.
$$
The free domains $\Omega_1$, $\Omega_2$, $\Omega_3$ 
of $\C$ 
are attached to each of such boundary components.
We consider an Anti-Self-Dual connection $A$ on $X$ 
and holomorphic maps 
$u_1 : \Omega_1 \to R(\partial_-M_1)$, 
$u_2 : \Omega_2 \to R(\partial_-M_2)$, 
$u_3 : \Omega_3 \to R(\partial_+M_3)$.
\par
Along three dotted lines we require appropriate 
matching condition similar to those in 
\cite{fu3,Li}.
We also require $u_1$, $u_2$, $u_3$ satisfy 
appropriate boundary condition 
on $\partial\Omega_i \setminus \text{\rm dotted lines}$,
formulated by using Lagrangian submanifolds 
$R(M_1;E_{M_1})$, $R(M_2;E_{M_2})$, $R(M_3;E_{M_4})$, 
respectively.
\par
We consider the moduli space of the such triples $(A,u_1,u_2,u_3)$.
(We also include boundary marked points on the 
$\partial\Omega_i \setminus \text{\rm dotted lines}$ and 
require certain asymptotic boundary conditions on the 
three ends.)
\par
We observe the sliding ends of this moduli space 
corresponding to the three ends in (\ref{form8383}) 
coincide with the moduli spaces we use 
to obtain bounding cochain 
$b_{M_1}$, $b_{M_2}$, $b_{M_3}$,  respectively.
\par
Using the moduli space of the triples $(A,u_1,u_2,u_3)$ (plus marked 
points),
we can show that $b_{M_1}$, $b_{M_2}$, $b_{M_3}$ satisfies 
certain equalities which is 
the one we need to prove Theorem \ref{tjhem83}.
\par
In this article we restrict ourselves to the case 
of $SO(3)$ bundles $E$ with nontrivial $w_2(E)$.
The research to include the case when $E$ is a 
trivial bundle is now in progress.
See \cite{DF}.

\begin{figure}[h]
\centering
\includegraphics[scale=0.4]{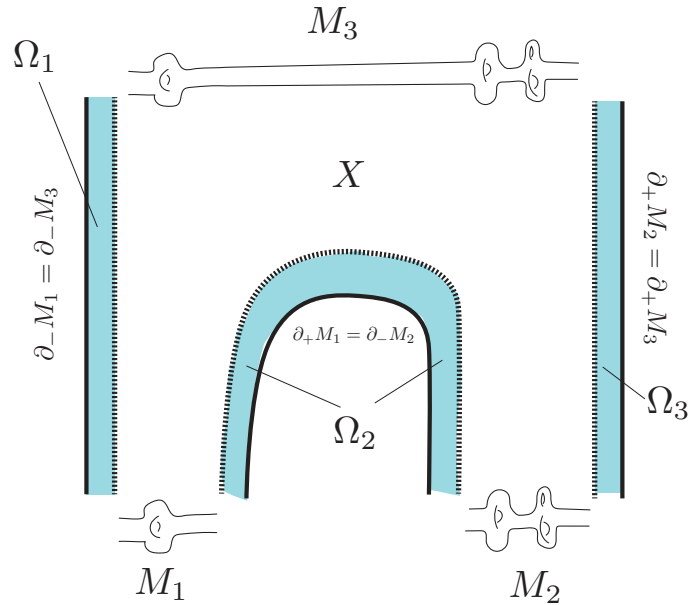}
\caption{Proof of Theorem \ref{tjhem83}}
\label{Figure62}
\end{figure}

\par\medskip
\noindent
{\bf Acknowledgement.} The research of the author is supported partially by NSF Grant No. 1406423
and Simons Collaboration for homological Mirror symmetry.
\bibliographystyle{amsalpha}

\end{document}